\numberwithin{equation}{section}
\declaretheorem[
	name=Theorem,
	numberwithin=section
	]{thm}
\declaretheorem[
	name=Conjecture,
	sibling=thm
	]{conj}
\declaretheorem[
	name=Lemma,
	sibling=thm,
	]{lem}
\declaretheorem[
	name=Proposition,
	sibling=thm,
	]{prop}
\declaretheorem[
	name=Corollary,
	sibling=thm,
	]{cor}
\declaretheorem[
	name=Remark,
	style=remark,
	sibling=thm,
	]{rem}
\def\cT{\mathcal{T}}
\def\cR{\mathcal{R}}
\def\cL{\mathcal{L}}
\def\cF{\mathcal{F}}
\def\cC{\mathcal{C}}
\def\P{\mathbb{P}}
\def\Q{\mathbb{Q}}
\def\He{\mathbb{H}}
\def\D{\mathbb{D}}
\def\DD{\mathbb{D}}
\def\E{\mathbb{E}}
\def\C{\mathbb{C}}
\def\R{\mathbb{R}}
\def\Z{\mathbb{Z}}
\def\T{\mathbb{T}}
\def\Gdim{{\Gamma_\delta}}
\def\Gtree{\Omega^\delta}
\def\ph{\varphi}
\def\eps{\varepsilon}
\DeclareMathOperator{\dist}{dist}
\DeclareMathOperator{\Int}{Int}
\DeclareMathOperator{\LE}{\mathsf{LE}}
\newcommand{\dd}{d}
\DeclareMathOperator{\Cross}{\mathsf{Cross}}
\def\la{\langle}
\def\ra{\rangle}
\DeclareMathOperator{\diam}{Diam}
\DeclareMathOperator{\crad}{crad}
\newcommand{\old}[1]{{}}
\title{Near-critical dimers and massive SLE}
\author{Nathanaël Berestycki, Levi Haunschmid-Sibitz}
\def\Dprime{{\Omega'}}
\def\aprime{{a'}}
\def\sigmaprime{\sigma'}
\begin{document}

\maketitle
\begin{abstract}
We consider the dimer model on the square and hexagonal lattices with doubly periodic weights. 
The purpose of this paper is threefold: (a) we establish a rigourous connection with the massive SLE$_2$ constructed by Makarov and Smirnov \cite{MakarovSmirnov} (and recently revisited by Chelkak and Wan \cite{ChelkakWan}); (b) we show that the convergence takes place in \emph{arbitrary} {bounded} domains subject to Temperleyan boundary conditions, and that the scaling limit is universal; and (c) we prove conformal covariance of the scaling limit. For this we introduce an inhomogeneous near-critical dimer model, corresponding to a drift for the underlying random walk which is a smoothly varying vector field or alternatively to an inhomogeneous mass profile. When the vector field derives from a log-convex potential  we prove that the corresponding loop-erased random walk has a universal scaling limit. Our techniques rely on an exact discrete Girsanov identity on the triangular lattice which may be of independent interest. We complement our results by stating precise conjectures making connections to a generalised Sine-Gordon model at the free fermion point. 

\end{abstract}

\tableofcontents

\hypersetup{pageanchor=true}
\renewcommand{\thepage}{ \arabic{page} }

\section{Introduction}

Makarov and Smirnov initiated in \cite{MakarovSmirnov} a programme to describe near-critical scaling limits of planar statistical mechanics models in terms of massive SLE and/or Gaussian free field. To quote from their paper:

\emph{The key property of SLE is its conformal invariance, which is expected in 2D
lattice models only at criticality, and the question naturally arises:
Can SLE success be replicated for off-critical models?
In most off-critical cases to obtain a non-trivial scaling limit one has to adjust some
parameter [...], sending it at an appropriate speed to the critical value. Such limits
lead to massive field theories, so the question can be reformulated as whether one
can use SLEs to describe those. Massive CFTs are no longer conformally invariant,
but are still covariant when mass is considered as a variable covariant density [...].}

\medskip As part of this programme, Makarov and Smirnov introduced a massive version of SLE$_2$, which will be defined more precisely in Section \ref{SS:massiveSLE}. As established rigourously recently by Chelkak and Wan \cite{ChelkakWan}, this can be seen as the scaling limit of the loop-erasure of a massive random walk, i.e., a random walk which has a fixed probability of being killed at every step, and which is conditioned to leave the domain before being killed. Makarov and Smirnov also listed a number of fascinating questions, many of which remain open today.

In this paper we carry out part of this programme for the near-critical dimer model. The dimer model is one of the most classical models of statistical mechanics, and is equivalent to random matchings on a planar bipartite graph. That is, given such a (finite) graph $G$, we associate to every dimer covering (or perfect matching) $\mathbf{m} $ (a subset of the edges such that every vertex is covered exactly once) the Gibbs weight
$$
\P( \mathbf{m}) = \frac1Z \prod_{e \in \mathbf{m}} w_e,
$$
where $w_e>0$ are given edge weights and $Z$ is a normalisation constant (partition function). The model is also equivalent to tilings (in particular to lozenge tilings if the underlying graph is the hexagonal lattice; see \cite{Gorin} for a recent superb introduction).
 The study of the dimer model goes back to the pioneering work of Temperley and Fisher \cite{FisherTemperley} and Kasteleyn \cite{Kasteleyn1961TheLattice}, who computed its partition function, and noted that it is equal (up to a sign or more generally a complex number of modulus one) to the determinant of a matrix now called the Kasteleyn matrix, which is a suitably weighted adjacency matrix. This identity is the starting point of a far-reaching theory which eventually led Kenyon to prove convergence (subject to so-called Temperleyan boundary conditions, described below) of the associated height function to a Gaussian free field in a sequence of two landmark papers \cite{Kenyon_confinv}, \cite{KenyonGFF} when all edge weights are equal. This was the first proof of conformal invariance for a planar model of statistical mechanics.

\subsection{Off-critical dimer model.} \label{SS:offcriticalintro} In this paper we are concerned with an off-critical model, which can be defined either on the square lattice or on the hexagonal lattice when the edge weights are assumed to be doubly periodic, in the following sense. We start with the square lattice. Let $s_0, \ldots, s_3>0$. We divide the square lattice into the usual black and white vertices in checkboard fashion, and the black vertices are themselves divided into two alternating classes $B_1$ and $B_2$ (as in \cite{Kenyon_confinv}). We declare that around every $B_1$ vertex, the edge weights are respectively $s_0, \ldots, s_3$ as we move in the clockwise direction starting from the east (thus $s_k$ corresponds to the direction $\mathbf{i}^k = e^{\mathbf{i} k \pi/2}, k = 0, \ldots, 3$; here $\mathbf{i}= \sqrt{-1}$). All other edge weights are set to 1. See Figure \ref{F:double} for an illustration. We will further specify the weights $s_k$ so as to be in the near-critical regime in \eqref{eq:weights_square}.

\begin{figure}
  \begin{center}
    \includegraphics[width = .3 \textwidth]{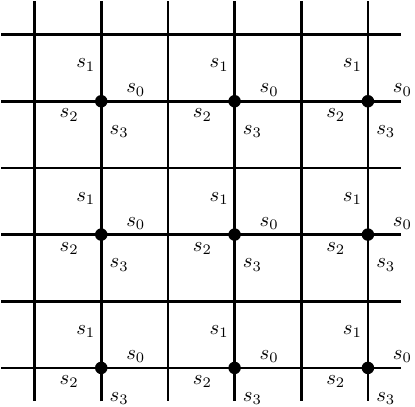}\quad
    \includegraphics[width = .3 \textwidth]{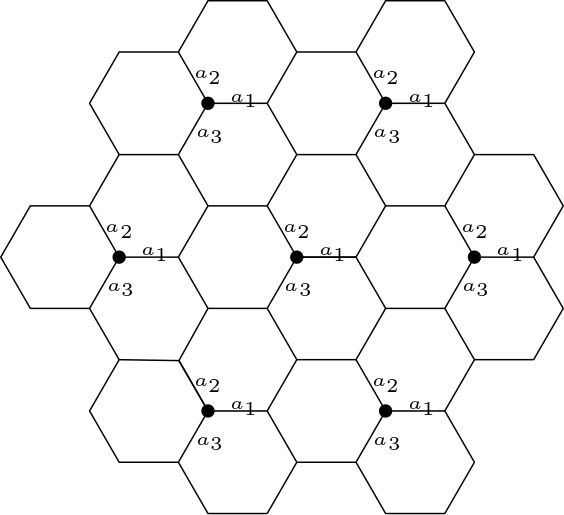}
  \end{center}
  \caption{Doubly periodic weights on the square and hexagonal lattices. Black vertices of type 1 are marked with disks. The weights $s_0, \ldots, s_3$ or $a_0,\dots,a_2$ are periodically repeated around every black vertex of type 1. Every other edge weight is equal to 1.}
  \label{F:double}
\end{figure}

A similar construction can be applied to the hexagonal lattice. Consider the usual black and white colouring of the vertices of the hexagonal lattice $\He$. Black vertices at distance two apart in $\He$ form a triangular lattice, which is a tripartite graph. So all black vertices in $\He$ belong one of three possible classes, $B_1, B_2, B_3$, say. We declare that the edge weights around a $B_1$ vertex are respectively $a_0,a_1$ and $a_2$ going counter-clockwise starting from the east direction See Figure \ref{F:double}.

This model was first considered in the work of Chhita \cite{Chhita2012} in the case of the square lattice, who called it the ``drifted'' dimer model, for reasons that will become clear later. 
Suppose $s_k = 1+ c_k \delta$, where $\delta$ tends to zero (we will later identify $\delta$ with the mesh size). This scaling will be enforced throughout the paper. As already noted in \cite{Chhita2012}, this choice of scaling essentially corresponds to studying the \textbf{liquid-gas boundary} of the dimer phases. 
When applying the treatment of Kenyon \cite{Kenyon_confinv} to this model, if $K$ denote the associated Kasteleyn matrix then one can easily check that $L = K^*K$, viewed as an operator on the black vertices, is approximately the negative of a massive Laplacian: indeed, on the $B_1$ vertices, the diagonal entry is of the form $s_0^2 + \ldots +s_3^2$, while the sum of the off-diagonal entries is $-2s_1s_3 - 2s_0 s_2$. (The reason why this is only an approximation is because terms of the form $L(b_1, b_2)$ are not all exactly zero when $b_1 \in B_1, b_2 \in B_2$; they are simply lower order than $L(b_1, b'_1)$ for $b_1, b'_1 \in B_1$). (In fact, after a suitable transformation, the inverse Kasteleyn matrix can be related to a modified Kasteleyn matrix which corresponds exactly to the Green function of a massive random walk, see Section 3 of \cite{Chhita2012}).

From this it is perhaps natural to conjecture that the height function, suitably rescaled, converges to the \textbf{massive Gaussian free field}, which is (informally) the Gaussian field whose covariance matrix is the massive Green function. Surprisingly, however, \cite{Chhita2012} showed that while there is a scaling limit for the height function as $\delta \to 0$ in the full plane, the limit cannot be the massive Gaussian free field since its moments do not even satisfy the Wick relation, hence it is not even Gaussian.

\medskip The purpose of this paper is threefold:

\begin{itemize}
  \item First, we extend the results of \cite{Chhita2012} in several different ways: we consider not only the square lattice but also the hexagonal lattice; furthermore our results are not only valid in the whole plane but in arbitrary simply connected domains subject to Temperleyan boundary conditions (these are perhaps the nicest boundary conditions from the combinatorial point of view and are defined immediately below in Section \ref{sec:temperleyan}).

  \item Second, we show for the first time a connection to massive models and more specifically to the massive SLE$_2$, constructed by Makarov and Smirnov \cite{MakarovSmirnov} and revisited recently by Chelkak and Wan \cite{ChelkakWan}.

  \item Finally, we show that the scaling limit of the height function obeys a certain conformal covariance rule. This is reminiscent of other near-critical scaling limits previously obtained e.g. for percolation \cite{GarbanPeteSchramm_nearcritical}. Interestingly however, the covariance rule involves not only the modulus of the derivative of the conformal map but also its argument.

\end{itemize}

Last but not least, this will be complemented by some novel conjectures attempting to make a connection with a generalised Sine-Gordon model (which will be introduced below) at its free fermion point. Along the way we identify a larger and more interesting family of near-critical dimer models which give an intuitively transparent explanation for why and how the Sine-Gordon model is connected to near-critical dimers; these models are characterised by the fact that the mass (or equivalently the drift) is inhomogeneous. 

At the technical level a key contribution of this paper will be an exact discrete Girsanov identity on the triangular lattice as well as a proof that the loop-erasure of a random walk with drift which may vary with the position has a scaling limit. 

\subsection{Temperleyan boundary conditions.}\label{sec:temperleyan}

To make the connection to massive SLE and state our results, we will now define precisely the type of boundary conditions we impose on the model, which in the case of the square grid are known as Temperleyan. We recall the definition in this case first. Let $\Omega \subset \mathbb{C}$ be a bounded simply connected domain of the complex plane. Let $\Gdim  = (v(\Gdim), E(\Gdim))$ be a sequence of graphs in $\delta \Z^2$approximating $\Omega$ in the following sense.
That is, $\Gdim$ is a planar graph with vertex set $v(\Gdim) \subset \Omega \cap (\delta \Z^2)$ and edge set $E(\Gdim)$ such that if $x,y \in v(\Gdim)$ and $x \sim y$ in $\delta \Z^2$, then $(x,y) \in E(\Gdim)$ if and only if $[x,y] \subset \Omega$.
Furthermore, assume that $\Gdim$ is connected and that $\Gdim$ is \textbf{Temperleyan}: namely, all corners (be them convex or concave) are of type $B_2$, and one further such corner has been removed. See e.g. Figure \ref{F:temp}. Equivalently, along the vertex boundary, all black vertices are of type $B_2$, i.e, the boundary alternates between $B_2$ and white vertices (except at the removed corner).
Further we will assume that the associated graph $\Gtree$, defined in the next section, converges to $\Omega$ in the Carathéodory topology.
Note that $\Gdim$ being connected and Temperleyan can be obtained by choosing $v(\Gdim)$ correctly.
In particular, it is not equal to $\Omega \cap (\delta \Z^2)$ in general.

We make a similar definition in the hexagonal case. We say that the domain $\Gdim$ whose vertices are in $\delta \He$ is Temperleyan if the boundary does not contain any $B_1$ vertices (i.e., consists only of $B_2$ and $B_3$ and white vertices), and a vertex of type $B_2$ or $B_3$ has been removed. Figure \ref{F:temp} shows examples of a Temperleyan domain on both the square and  hexagonal lattices.

\begin{figure}
  \begin{center}
    \includegraphics[width = .3 \textwidth]{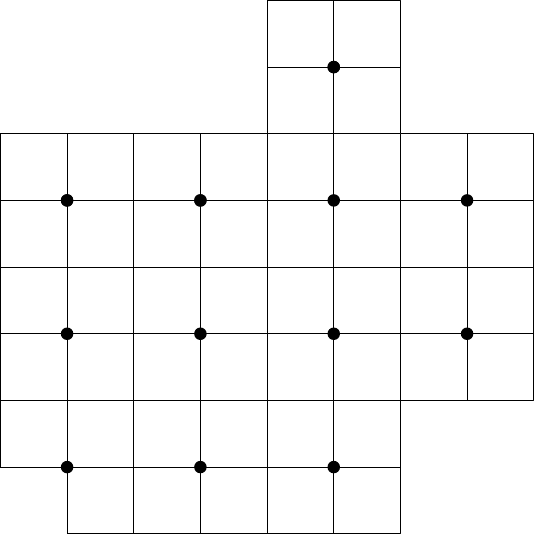}\quad
    \includegraphics[width = .3 \textwidth]{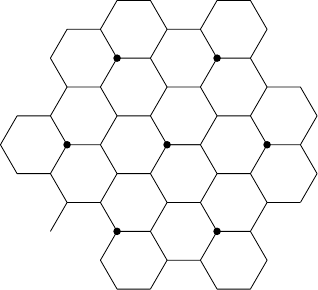}
  \end{center}
  \caption{A Temperleyan domain on the square lattice and a Temperleyan domain on the hexagonal lattice. In both the black vertices of type $B_1$ have been highlighted and a (non-$B_1$) black vertex on the lower left boundary has been removed.}
  \label{F:temp}
\end{figure}

\subsection{Temperley's bijection.}
\label{sec:Temperleybijintro}
Temperley's bijection is a powerful tool which relates the dimer model on the Temperleyan graph $\Gdim$ to a pair of spanning trees on a different graph. As it turns out, the Temperleyan boundary conditions described above are such that both dimer models (i.e., on the square and hexagonal lattices respectively) are equivalent to a certain spanning tree on a (possibly directed) graph $\Gtree$ whose vertices are the $B_1$ vertices of $\Gdim$ (or, equivalently, to a pair of dual spanning trees on $\Gtree$ and its planar dual). In the square lattice (and for rectangles) this goes back to the original paper of Temperley and Fisher \cite{FisherTemperley}. This was considerably generalised and strengthened in many subsequent works, in particular, \cite{KenyonProppWilsonGeneralizedTemperley}. That paper included the perhaps less-well known case of the hexagonal lattice, which we will use in this paper and will be recalled in more detail in Section \ref{SS:Tbij_hex}; in that case, the corresponding graph $\Gtree$ of the spanning tree is the directed triangular lattice with mesh size $\delta$.

As developed in the sequence of papers \cite{BLR_DimersGeometry, BLR_torus, BLRriemann2}, Temperley's bijection 
can be used to describe the scaling limit of the height function fluctuations via a random geometric approach. Essentially these papers reduce the problem of finding the scaling limit of the dimer height function to the (easier) problem of finding a scaling limit for the associated Tempereleyan tree in the Schramm topology: in other words, to the question of the scaling limit of a single branch of that tree. In turn, by Wilson's algorithm, this boils down to the scaling limit of the loop-erasure of the random walk on the (possibly directed) graph $\Gtree$.

\subsection{Massive SLE\texorpdfstring{$_2$}{2}.} \label{SS:massiveSLE} As already mentioned, the construction of massive SLE$_2$ was sketched by Makarov and Smirnov in \cite{MakarovSmirnov} and recently revisited by Chelkak and Wan \cite{ChelkakWan} (see also \cite{Bauer2008LERWSLEs} for a mathematical physics perspective). We will describe it in the radial case for ease of comparison with the situation which is of interest to us, though one should note that Chelkak and Wan's paper actually deals with the chordal case. 

A massive random walk (on the square lattice, say) is a walk which has a chance of order $\delta^2$ to be killed at every time step (the constant of proportionality is by definition $m^2/2$, where $m\ge 0$ is the mass), and otherwise moves like ordinary walk. Massive radial SLE$_2$ describes the scaling limit of the loop-erasure of a massive random walk from $o$ to $a$ (where $a$ is on the boundary of a simply connected domain $\Omega$, and $o$ is in the interior of $\Omega$), conditioned on not getting killed before reaching $a$. In fact, it is more convenient to define massive SLE$_2$ by its associated Loewner flow, which in the radial case is defined by Loewner's equation (parametrised by capacity)
$$
\frac{d g_t (z)}{dt} = - g_t(z) \frac{g_t(z) + \zeta_t}{g_t(z) - \zeta_t} ; z \in \Omega_t
$$
where $\varphi$ is a fixed conformal map sending $\Omega $ to $\mathbb{D}$ and $o$ to $0$, $\Omega_t$ denotes the slit domain $\Omega \setminus \gamma([0,t])$ (since $\kappa =2$ we do not need to remove more than that), $g_t$ is the Loewner map from $\Omega_t$ to $\mathbb{D}$, and if we write the driving function in the form $ \zeta_t = e^{i \xi_t}$, then $\xi$ solves the Stochastic Differential Equation:
\begin{equation}\label{eq:massiveSLE}
d\xi_t = \sqrt{2} dB_t + 2 \lambda_t dt; \lambda_t = \frac{\partial}{\partial g_t(a_t)}\log \frac{P^{(m)}_{\Omega_t} (o, a_t)}{P_{\Omega_t} (o,a_t)}.
\end{equation}
Here $a_t=\gamma(t)$, and $P^{(m)}_{\Omega_t}$ and $P_{\Omega_t}$ are the so-called invariant Poisson kernels for the Brownian motion with mass $m$, and regular Brownian motion respectively, in $\Omega_t$. By Brownian motion with mass $m$ we mean the law of a standard Brownian motion, killed at rate $m^2$ (which defines a subprobability measure)\footnote{Note that the probability with which the above discrete random walk is killed at each step (namely, $(1/2)m^2 \delta^2$) is chosen so that in the scaling limit, we obtain a Brownian motion killed at rate $m^2$. Indeed, if $X^\delta$ is an ordinary random walk on $\delta \Z^2$, then $(X_{2\delta^{-2} t})_{t\ge 0}$ converges weakly uniformly on compacts to a standard planar Brownian motion. In \cite{ChelkakWan} the killing probability is chosen to be $ \delta^2 m^2$ instead of $\delta^2m^2/2$ but this appears to be a typo.}. The notion of invariant Poisson kernel is quite delicate, especially in the massive case, and/or if the domain $\Omega_t$ has a rough boundary (which is a.s. the case here, as $\Omega_t$ is the complement in $\Omega$ of a curve which is absolutely continuous with respect to SLE$_2$). We explain this carefully in Section \ref{S:LERW_general}. For now, we simply briefly explain what we mean in the non-massive case (i.e., for standard Brownian motion). If $\Omega$ is smooth, the density of harmonic measure in $\Omega$ viewed from $x \in \Omega$, with respect to arclength on $\partial \Omega$, defines a conformally covariant function $h_\Omega (x,a)$. The ratio $h_\Omega(x,a) / h_\Omega(o, a)$ (for a fixed reference point $o \in \Omega$) thus defines a conformally invariant function, which is one of the definitions of the invariant Poisson kernel. (The word invariant refers to the above conformal invariance). Since it is conformally invariant, it also makes sense for domains $\Omega$ that are rough (i.e., where arclength is not even defined) so long as $\Omega$ is simply connected; the boundary point $a$ is then viewed as a prime end  (or equivalently a point on the Martin boundary) of $\Omega$.

The above expression for $\lambda_t$ is that in \cite{ChelkakWan} which is somewhat similar to the one appearing in Makarov and Smirnov \cite[(9)]{MakarovSmirnov}. However we feel it deserves a few explanations, notably considering the meaning of the differentiation $\tfrac{\partial}{\partial g_t(a_t)}$. This should be understood as a spatial differentiation with respect to $x \in \R$ after mapping $\Omega_t$ to $\D$, setting $a_t = e^{ix}$, and evaluating the result at $x = \xi_t$. That is, let $\rho = m^2$, and let $\rho_t$ denote the (squared) mass profile in $\D$ that corresponds to the constant mass $m $ in $\Omega_t$; i.e., after mapping to $\D$ (and applying the relevant time change) we obtain a Brownian motion killed at rate $\rho_t(x)$). Then 
$$
\lambda_t = \frac{\partial}{\partial x} \left.\log \frac{P_{\D}^{(\rho_t)} (0, e^{ix})}{P^{(0)}_\D (0, e^{ix})} \right|_{x = \xi_t}.
$$
By choice of normalisation, the denominator in the fraction is simply equal to 1, so that this logarithmic derivative can also be written in the form: 
$$
\lambda_t = \left. \frac{\tfrac{\partial}{\partial x}  P_{\D}^{(\rho_t)} (0, e^{ix})}
{P^{(\rho_t)}_\D (0, e^{ix})} 
\right|_{x = \xi_t}.
$$
The spatial derivative of the invariant Poisson kernel, i.e., the numerator of this fraction, is a quantity which can be shown to correspond to what Chelkak and Wan \cite{ChelkakWan} denote by $Q^{(\rho_t)}_\D (x, a_t)$. Their result in fact establishes convergence with 
\begin{equation}\label{eq:lambda_t_actual}
\lambda_t = 
\frac{Q_{\D}^{(\rho_t)}(0, e^{i \xi_t})}{P_\D^{(\rho_t)} (0, e^{i \xi_t})} = 
\frac{Q_t^{(\rho)} (o, a_t)}{P_t^{(\rho)} ( o, a_t)}.
\end{equation}
Proving this chain of identities would require some arguments. This is circumvented by defining $\lambda_t $ (i.e., the right hand side of \eqref{eq:massiveSLE}) as in the right hand side of \eqref{eq:lambda_t_actual}.


The description above is then a theorem proved in the chordal case and on the square lattice by \cite{ChelkakWan} (the radial case is briefly discussed as being analogue to, and in fact a little simpler than, the chordal case). 
See Theorem 1.1 in \cite{ChelkakWan} for a precise statement, and see \cite{Lawler2008ConformallyPlane} as well as \cite{Berestycki2014LecturesEvolution} for general references on SLE.

\subsection{Main results}

Our first result below concerns the branches of the Temperleyan tree for an off-critical dimer model on a graph (defined more precisely below, which may be a piece either of the square lattice or of the hexagonal lattice, scaled by $\delta$) with Temperleyan boundary conditions, as explained above. The result shows that the scaling limit exists, and furthermore gives a connection to massive models. On the square lattice, suppose that the weights $s_0, \ldots, s_3$ satisfy
\begin{equation} \label{eq:weights_square}
  s_k = 1 + c_k \delta \quad (k = 0, \ldots, 3) 
\end{equation}
counterclockwise from the east direction, while on the hexagonal lattice we assume that the weights $a_0, \dots, a_2$ satisfy
\begin{equation}\label{eq:weights_hex}
  a_k = 1+ c_k \delta \quad (k = 0, \ldots, 2)
\end{equation}
also counterclockwise from the east direction. We consider the associated rescaled drift vector $\alpha$ defined respectively by
\begin{equation}\label{E:alpha_intro}
\alpha = \frac12\sum_{k=0}^3c_k\mathbf i^k ; \alpha = \frac23\sum_{k=0}^2c_k\tau^k,
\end{equation}
where ${\mathbf i} = \sqrt{-1}=e^{\mathbf{i} \pi/2}$ and $\tau = e^{2\mathbf{i} \pi/3 }$ are the fourth and third roots of unity, respectively. The scaling factors in front of these expressions are chosen to guarantee that in the scaling limit, a random walk with the above weights converges to Brownian motion with drift $\alpha$: that is, if $X^\delta$ denotes this random walk on either $\delta \Z^2$ or $\delta \T$, then $(X^\delta_{2\delta^{-2} t})_{t\ge 0 }$ converges weakly, uniformly on compacts, to $(B_t + \alpha t)_{t\ge 0}$, where $B$ is a standard planar Brownian motion.

We also assume
\begin{equation}\label{E:equalaxes}
c_0 + c_2 = c_1 + c_3
\end{equation}
in the square lattice case.
See Remark \ref{R:equalitycondition} for a discussion of this condition. 

We suppose we are given a Temperleyan lattice domain $\Gdim$ as in Section \ref{sec:temperleyan} and a dimer model on $\Gdim$. Applying the Temperleyan bijection leads to a pair of dual trees respectively on $\Gtree$ and its dual, where $\Gtree$ is a subgraph of either $\delta\mathbb T$ or $\delta\mathbb Z^2$.
Note that there is natural edge boundary $\partial\Gtree$ on $\Gtree$, corresponding to pair of vertices $(y_1, y_2)$ of the lattice  
such that $y_1$ and $y_2$ are neighbours in the lattice, at least one of $y_1$ or $y_2$ is a vertex of $\Gtree$ but not both. With a slight abuse of notation we will still refer to $y_\delta$ as a point on the boundary, identify it in calculations with $y_2$ and say that $y_\delta\to y$ if $y_2$ converges to $y$ as $\delta\to 0$ (or equivalently $y_1$). Likewise, we will often consider the random walk $(X_n, \ge 0)$ on $\Gtree$. 
With an abuse of notation we will refer to the first time $\sigma$ that the walk leaves $\Gtree$ as the smallest  $n\ge 1$ such that $(X_{n-1}, X_n)$ is a boundary edge. We will also identify, with an abuse of notation, the position $X_\sigma$ with the boundary edge $(X_{\sigma -1}, X_\sigma)$, and denote it by $Y_\delta$ in the following.

In fact, the distribution of $Y_\delta$ converges weakly to a distribution $\mu^{(\alpha)}_z $ on $\partial \Omega$, which is the exit law from $\Omega$ of Brownian motion with unit covariance matrix and drift vector $\alpha$. We therefore obtain the following result.

\begin{thm}\label{T:constantdrift_tree}
  \label{T:convergenceTree} Let $\cT_\delta$ denote the Temperleyan tree associated with the dimer configuration in $\Gdim$ (either in the hexagonal or square lattice case). Then as $\delta \to 0$, the tree $\cT_\delta$ converges in the Schramm sense to a continuum limit tree $\cT$. Each branch of this tree from a point $z \in \Omega$ has the following law: sample $a$ according to $\mu^{(\alpha)}_z$; given $a$, the branch of $\cT$ from $z$ to $a$ has the law of massive radial SLE$_2$ with mass $m = \|\alpha\|/\sqrt{2}$ (associated with a Brownian motion killed at rate $\|\alpha\|^2 / 2$).
\end{thm}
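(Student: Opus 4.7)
The plan is to deduce Theorem \ref{T:convergenceTree} from Theorem \ref{T:radialmassive_dimers} by first handling the endpoint distribution and then invoking Wilson's algorithm to pass from single-branch to joint convergence.

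\textbf{Step 1 (Boundary hitting law).} I would first show that $Y_\delta \Rightarrow \sigma^{(\Delta)}$ as $\delta \to 0$. A direct inspection of the random walk on $\Gtree$ (whose transitions are determined by the weights \eqref{eq:weights_square}--\eqref{eq:weights_hex}) shows per-step drift $\delta \Delta + O(\delta^2)$ and covariance $\delta^2 I + O(\delta^3)$. A standard invariance principle then gives, under parabolic rescaling, convergence of $X$ to Brownian motion with drift $\Delta$ in $\Omega$. Since $\Omega$ is a bounded simply connected domain its boundary is regular for this diffusion, so the discrete exit position $Y_\delta$ converges weakly to the continuum exit measure $\sigma^{(\Delta)}$.

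\textbf{Step 2 (Single-branch convergence).} Combining Step 1 with Theorem \ref{T:radialmassive_dimers} and the (elementary) continuity of the massive radial SLE$_2$ driving SDE in its target $b$ — which is immediate from the continuity of the massive and ordinary Poisson kernels $P^{(m)}_{\Omega_t}, P_{\Omega_t}$ — one obtains, by disintegration over the endpoint, unconditional convergence in law of any fixed branch $\gamma^\delta$ from $z \in \Omega$ to the curve $\gamma$ described in the theorem.

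\textbf{Step 3 (Schramm convergence via Wilson).} By definition of the Schramm topology, convergence of $\cT_\delta$ reduces to joint convergence of branches from any finite collection of points $z_1, \ldots, z_k$ drawn from a countable dense subset of $\Omega$. I would argue inductively in $j$: Wilson's algorithm realises the $j$-th branch as the loop-erasure of the walk from $z_j$ killed upon hitting $A^\delta_{j-1} := \partial \Gtree \cup \gamma^\delta_1 \cup \ldots \cup \gamma^\delta_{j-1}$. Conditional on $(\gamma^\delta_1,\ldots,\gamma^\delta_{j-1})$ and their Carathéodory limit, one applies the argument of Theorem \ref{T:radialmassive_dimers} in the slit domain $\Omega \setminus (\gamma_1 \cup \ldots \cup \gamma_{j-1})$ to conclude that $\gamma^\delta_j$ converges to massive radial SLE$_2$ in this domain, targeted at the (possibly interior) hitting point on $A_{j-1}$.

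\textbf{Main obstacle.} The substantive difficulty is precisely this induction: the proof of Theorem \ref{T:radialmassive_dimers} must be shown to apply in random slit domains with rough boundaries and with target points that may lie in the interior of $\Omega$. Thanks to the local character of the discrete Girsanov identity and the conformal-covariance built into the imaginary-geometry framework of \cite{BLR_DimersGeometry}, this extension is plausible, but it requires (i) uniform a priori regularity estimates on the discrete branches $\gamma^\delta_j$ guaranteeing Carathéodory convergence of the slit domains, and (ii) continuity of the massive Poisson kernel with respect to that convergence so that the limiting driving SDE genuinely is that of massive radial SLE$_2$ in the slit continuum domain. These are the estimates I would expect to absorb the bulk of the technical work.
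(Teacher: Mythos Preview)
Your proposal is correct and matches the paper's approach: the paper's entire proof of Theorem~\ref{T:constantdrift_tree} is the single sentence ``The proof of Theorem~\ref{T:constantdrift_tree} follows from Wilson's algorithm and induction,'' and your Steps~1--3 are a faithful unpacking of that sentence (Steps~1--2 reproduce the content of the paper's Theorem~\ref{T:LERW_constant}, and Step~3 is exactly the Wilson-plus-induction reduction).

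One small misdirection: in your ``main obstacle'' you invoke the imaginary-geometry framework of \cite{BLR_DimersGeometry} for the inductive step, but that machinery is used in the paper only for the passage from tree to height function (Corollary~\ref{C:height}), not for the Schramm convergence itself. For the induction, what is actually needed is that the single-branch convergence result (the paper's Theorem~\ref{T:LERW_constant}, built on \cite{ChelkakWan} and Section~\ref{section:convergence}) is stated for \emph{arbitrary} discrete approximations of \emph{arbitrary} simply connected domains in the Carath\'eodory sense; the random slit domains produced by earlier branches fall under this generality once the a priori curve regularity (i.e.\ the Kemppainen--Smirnov annulus-crossing condition, which the paper verifies via the uniform crossing estimate~\eqref{E:crossing_goal}) guarantees Carath\'eodory convergence of those slit domains. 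Together with Schramm's finiteness lemma, this closes the induction without further input.
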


A key result from \cite{BLR_DimersGeometry} (see also \cite{BLR_torus}) is that the convergence of the Temperleyan tree implies the convergence of the dimer height function. This requires only a uniform crossing estimate and some basic estimates such as polynomial decay on the probability for the loop-erasure to visit a small ball, and control on the moments of winding close to a point (these estimates are a fairly simple consequence of our work, and are written explicitly at the end of Section \ref{sec:drift} in a more general context).
We obtain the following corollary:

\begin{cor}\label{C:height}
In the setup of Theorem \ref{T:convergenceTree}, the centered height function $h^\delta - \E( h^\delta)$ converges to a limit as $\delta \to 0$ whose law depends only on the vector $\alpha$ defined in \eqref{E:alpha_intro}.
\end{cor}

\subsection{Exact Girsanov identity}

To establish these results, we observe that the law of a branch in the Temperleyan tree may be described via Wilson's algorithm as the loop-erasure of a random walk on $\Gtree$ with near-critical weights defined by \eqref{eq:weights_square} on the square lattice and \eqref{eq:weights_hex} on the directed triangular lattice respectively. The random walk corresponding to these weights is one which has a drift: as the mesh size $\delta \to 0$, the random walk converges to a Brownian motion with drift vector $\alpha$ defined in \eqref{E:alpha_intro}. 
 Furthermore, using a discrete Girsanov transform (which will be detailed below), we relate the corresponding random walks to massive ones on the same lattices; the above result then intuitively follows by the known convergence of the massive LERW to the massive SLE$_2$ of Makarov and Smirnov (proved rigourously by Chelkak and Wan recently in \cite{ChelkakWan}).

 We now describe our Girsanov identity. As this holds independent of any scaling limit consideration we formulate it in the unscaled triangular lattice $\mathbb{T}$ and square lattice $\mathbb{Z}^2$. The Girsanov identity takes a slightly different form in each case. Although both are exact formulas, the connection between massive and drifted walk is only exact on the triangular case (Corollary \ref{conditionedequality}) whereas it is approximate in the case of the square lattice (Corollary \ref{P:massdrift_square}). On the other hand, the application of the results of Chelkak and Wan \cite{ChelkakWan} in the directed triangular case needs additional arguments because of the lack of reversibility. As we believe this result is of independent interest, we state it below on the triangular lattice where the statement is the simplest.  We consider a Markov chain on the (directed) triangular lattice $\mathbb{T}$ where the jump probabilities are allowed to depend on the position of the vertex $v$ of the triangular lattice $\mathbb{T}$. That is, suppose given for any $v \in \mathbb{T}$, a collection of parameters $(\alpha_0 (v), \alpha_1 (v), \alpha_2 (v)) \in \R^3$, and
let $\Q$ denote the law of a Markov chain such that if the walk is at the vertex $v$, then the jump probabilities are given by
\begin{equation}\label{eq:driftweights}
\Q(v,v+ \tau^{k} ) = \frac{e^{\alpha_k(v)}}{a(v)}, \quad k = 0, \ldots, 2, \quad \text{ with } a(v)  = e^{\alpha_0 (v)}+ \ldots + e^{\alpha_2(v)}.
\end{equation}
Let also $Y$ denote the position of the random walk when it hits $\partial \Omega$ and let $\Q (\cdot | Y = y)$ denote the conditional law given the exit point is $y$. We also let $\P = \P^{(0)}$ denote the law of the usual simple random on the directed triangular lattice $ \mathbb{T}$.

Fix $\gamma = (x_0, \ldots, x_{n}) $ a given path on the triangular lattice, starting from some point $x_0 = z \in \Omega$ of some length $n= N(\gamma)$. Let $\dd x_s = x_{s+1} - x_s \in \{1, \tau, \tau^2\} \subsim \R^2$, for $s = 0, \ldots, n-1$ denote the discrete derivative of $\gamma_s$ at time $s$.
Define  $ \beta(v)>0$ by
\begin{equation}\label{E:tribeta}
 \exp(- \beta (v)^2)= (\tfrac{a(v)}{3})^{-3} \prod_{k=0}^2 e^{\alpha_k(v)},
\end{equation}
which is well-defined by the arithmetic-geometric mean inequality. Let $\alpha=\alpha(v) = \frac23(\alpha_0+\alpha_1\tau+\alpha_2\tau^2)$, which is a complex number (identified with a vector in $\R^2$) associated to every vertex $v$ of the triangular lattice $\mathbb{T}$.

Note that while $\alpha$ does not uniquely determine the $\alpha_i$, it does determine $\Q$, since the transition probabilities in \eqref{eq:driftweights} do not change under a shift $(\alpha_1,\alpha_2,\alpha_3)\to(\alpha_1+x,\alpha_2+x,\alpha_3+x)$ and all $(\alpha_1,\alpha_2,\alpha_3)$ corresponding to a specific $\alpha$ are related to one another by such a shift. Thus given a vector $\alpha (v)\in \R^2$, there is a unique choice of $\alpha_0(v), \alpha_1( v), \alpha_2(v)\in \R$ summing to zero such that $\alpha(v) = \tfrac23 (\alpha_0(v) + \alpha_1(v) \tau + \alpha_2(v) \tau^2)$.
The following gives us an exact value for the global Radon--Nikodym derivative of the law $\Q$ compared to $\P^{(0)}$.
\begin{thm}
\label{L:girsanov_discrete_tri}
\begin{equation}\label{eq:MnVn}
\frac{\mathbb Q_x(\gamma)}{\mathbb P_x(\gamma)}=\exp(M_{n}-\tfrac12V_{n}),
\end{equation}
where
\begin{equation}
M_n=\sum_{s=0}^{n-1}\langle \alpha(x_s),\dd x_s\rangle
\quad\text{and}\quad
V_n=\frac{2}{3} \sum_{s=0}^{n-1}\beta^2(x_s).
\end{equation}
\end{thm}

Of particular relevance in this article will be the case where the drift vector $\alpha = \alpha (v), v \in \Omega^\delta$ derives from a \textbf{potential function} $\Phi : \mathbb{T} \to \R$, i.e., when 
\begin{equation}\label{eq:discretegradient}
\alpha (v) = \nabla^{\mathbb{T}} \Phi (v) : = \frac23\sum_{i=0}^2 ( \Phi (v+ \tau^i) - \Phi(v)) \tau^i;
\end{equation}
in other words, $\alpha_{i+1}(v) = \Phi ( v+ \tau^i) - \Phi (v)$ for $0\le i \le 2.$ 
If $\alpha$ is of this form, the Radon--Nikodym derivative in Theorem
\ref{L:girsanov_discrete_tri} 
takes a particularly nice form:

\begin{cor} \label{cor:girsanovpotential}
Suppose $\alpha$ derives from a potential function $\Phi$ as above. Then 
$$
\frac{\mathbb Q_x(\gamma)}{\mathbb P_x(\gamma)} = \exp \left( \Phi (x_n) - \Phi (x_0) - A_n\right); 
$$
where
$$
A_n = \sum_{s=0}^{n-1} \Delta^{\mathbb{T}} \Phi (x_s) + \frac13 \beta^2 (x_s).
$$
Here $\Delta^{\mathbb{T}} \Phi  (x) = \frac13 \sum_{i=0}^2 \Phi (x+ \tau^i) - \Phi (x)$ is the usual graph Laplacian on the directed triangular lattice $\mathbb{T}$.
\end{cor} 


To understand the formulas in Theorem \ref{L:girsanov_discrete_tri} and Corollary \ref{cor:girsanovpotential}, we now explain how both should be viewed as the discrete analogues of Girsanov's theorem followed by an application of It\^o's formula. Indeed, in the continuum, if $\Q$ is the law of the solution of the stochastic differential equation (SDE)
\begin{equation}
\dd X_t = \dd B_t + \alpha (X_t) \dd t \, ; \quad \text{where} \quad \alpha (x) = \nabla \ph(x)
\end{equation}
and where $\ph$ is a smooth Lipschitz function on $\R^2$, then 
\begin{align}
\left.\frac{d\mathbb{Q}}{d\mathbb{P}}\right|_t & = 
\exp \left( \int_0^t \alpha (X_s )\cdot dX_s - \frac12 \int_0^t \|\alpha (X_s)\|^2 ds \right) \label{eq:massGirsanov}\\
& = \exp\left( \ph(X_t) - \ph (X_0) - \frac12\int_0^t \Delta \ph(X_s)+\|\nabla \ph(X_s)\|^2 ds \right)\,. \label{eq:massGirsanovIto}
\end{align}
Thus the two terms $M_n$ and $V_n$ in \eqref{eq:MnVn} are the discrete analogues of the two terms on the right hand side of \eqref{eq:massGirsanov}. 
The term $A_n$ in Corollary \ref{cor:girsanovpotential} is the direct discrete analogue of the integral in \eqref{eq:massGirsanovIto}.

\subsection{Conformal covariance; loop-erased random walk with drift}\label{s:conformalcovarianceintro}

A fundamental feature of critical models in two-dimensional models of statistical mechanics is that they display conformal invariance. In the near-critical regimes that are under consideration in this paper, we cannot of course expect conformal invariance but rather a change of conformal coordinates rule known as \textbf{conformal covariance} which, roughly speaking, says that the transformation needs to be corrected by suitable powers of the derivative of the conformal map. This has been established in particular in the case of near-critical percolation in the paper \cite{GarbanPeteSchramm_nearcritical} (where this follows from analogous covariance rules for the limit of the uniform measure on pivotal points proved earlier in the remarkable work \cite{GarbanPeteSchramm_pivotal}). To state such a result we need to extend the setup slightly, by allowing the drift vector $\alpha$ to depend continuously on the point $z \in \Omega$.

Thus, let us fix $\alpha: \Omega \to \R^2 \simeq \C$ a locally Lipschitz, bounded \textbf{vector field} (identified with a complex-valued function) on $\Omega$; for $z\in \Omega$, $\alpha (z) \in \R^2$ will represent the drift at position $z$. Our results pertain only to the case where $\alpha$ \textbf{derives from a potential}, i.e. there exists a $\cC^1$ function $\ph: \bar \Omega \to \R$ such that $\alpha = \nabla \ph $.  (We will also make additional assumptions on $\ph$ in the theorem.)

Given such a bounded, continuous vector field, we associate weights on the (scaled) directed triangular lattice $\Omega^\delta$ as follows: 
\begin{equation}
    \label{eq:weights_hex_phi}
    \alpha^\delta_{i} (v) = \ph ( z+ \delta \tau^i) - \ph (z) ; \quad  i = 0,\ldots, 2
\end{equation}
and, as before, these parameters define a Markov chain on $\Omega^\delta$ (which we will refer to later as random walk on $\Omega^\delta$ with drift $\alpha$) given by:
\begin{equation}\label{E:weights3}
\P^{(\ph)}(v,v+ \delta \tau^{i} ) = \frac{e^{\alpha^\delta_{i}(v)}}{a(v)}, \quad i = 0, \ldots, 2, \quad \text{ with } a(v)  = e^{\alpha^\delta_0 (v)}+ \ldots + e^{\alpha^\delta_2(v)}.
\end{equation}

Thus the weights are defined by the gradient of the potential $\ph$, computed locally at each point $z \in \Gtree$. An easy application of the Stroock--Varadhan theorem shows that as $\delta \to 0$, the position of a random walk starting from $o^\delta \to o \in \Omega$, after scaling time by $2\delta^{-2}$, converges to the solution of the Stochastic Differential Equation
$
\dd X_t = \dd B_t + \alpha(X_t) \dd t.
$
Since $\alpha$ derives from a potential, the previous SDE takes the form
\begin{equation}\label{Langevin_intro}
\dd X_t = \dd B_t +\nabla \ph(X_t) \dd t,
\end{equation}
known as a \textbf{Langevin} SDE or diffusion.

We will show that Theorem \ref{T:convergenceTree} can be generalised to this more general setup both for the case of a general drift vector field. The first step is the construction of a scaling limit for the loop-erased random walk with drift (i.e., with weights as above) when the drift vector field derives from a potential satisfying a certain condition; this is the most difficult result of this paper.

\begin{thm}\label{T:LERWgeneral_drift}
Let $\Omega$ be a simply connected domain and $\alpha:\Omega\to\R^2$ be given. Fix $o \in \Omega$ and let $o^\delta \in \Omega^\delta$ such that $o^\delta \to o$ as $\delta \to 0$. Let $a\in \partial \Omega$ and let $a^\delta$ be a sequence of vertices on the boundary of $\Omega^\delta$ such that $a^\delta \to a$.

Suppose the vector field $\alpha$ derives from a smooth potential $\ph : \bar \Omega \to \R $ and suppose also that 
\begin{equation}\label{eq:masscondition}
\rho (x) = \frac12\Delta \ph (x) + \frac12\|\nabla \ph (x) \|^2 \ge 0; x \in \Omega. 
\end{equation}
Let $(X^\delta_t, t= 0,1, \ldots)$ be a random walk on $\Omega^\delta$ with drift $\alpha = \nabla \ph$, i.e., a sample from $\P_{o^\delta}^{(\ph)}$ defined in \eqref{E:weights3}. Let $\sigma^\delta$ denote the first time at which  $X^\delta$ leaves $\Omega$ and consider the
 the loop erasure  $\LE (X^\delta)$ of the walk up until this time. Then conditionally on $X^\delta_{\sigma^\delta} = a^\delta$,
 $\LE (X^\delta)$ converges weakly to a radial Loewner evolution $\gamma$ starting from $\gamma_0 = a$, whose driving function $\zeta_t = e^{ i \xi_t}$ (when parametrised by capacity) satisfies the stochastic differential equation
\begin{equation}\label{E:Loewner_off}
    d\xi_t=\sqrt{2}dB_t+\lambda_tdt, \quad \lambda_t=\frac{\partial}{\partial g_t(a_t)}\log\left(\frac{P^{(\rho)}_{\Omega_t}(o,a_t)}{P_{\Omega_t}(o,a_t)}\right),
\end{equation}
where $a_t=\gamma(t)$, $\Omega_t={\Omega}\setminus\gamma([0,t])$ is the slitted domain at time $t$, $g_t$ is the Loewner map from $\Omega_t$ to ${\DD}$ and $P^{(\rho)}_{\Omega_t}$ is the \textbf{massive invariant Poisson kernel} with (squared) mass profile $\rho = \frac{1}{2}\Delta\ph+\frac12\|\nabla\ph\|^2$, and $P_{\Omega_t}$ is the invariant Poisson kernel of regular Brownian motion in $\Omega_t$.
This convergence is in the sense of uniform convergence after reparametrization.
\end{thm}

The existence and construction of the massive invariant Poisson kernel is nontrivial and is part of the theorem; see Section \ref{S:LERW_general} and in particular Theorem \ref{T:PKDelta}. Furthermore, as before, the drift term in \eqref{E:Loewner_off} has to be understood appropriately, and will really be defined as $Q_t^{(\rho)}(o)/ P_t^{(\rho)} (o)$ where the numerator will also be defined carefully in Section \ref{S:LERW_general}. 

That there is also a unique strong solution to the SDE \eqref{E:Loewner_off} is also not obvious; this will follow from the Novikov criterion and the estimate in Lemma~\ref{lem:novikov}.

\medskip \textbf{Discussion of the assumption \eqref{eq:masscondition}.} The theorem above relies on the condition \eqref{eq:masscondition} which plays a technical but important role. We do not believe this assumption is necessary, but it greatly simplifies the analysis leading to the result. Essentially, our discrete Girsanov theorem allows us to relate random walk with drift to \textbf{random walk with variable mass}. The corresponding limiting (squared) mass function is then given by the formula 
\begin{equation}
\rho(x) : = \frac12\Delta \ph(x) + \frac12\|\nabla \ph(x) \|^2.
\end{equation}
Thus our assumption \eqref{eq:masscondition} amounts to requiring the killing rate to be nonnegative. One can already intuit the emergence of this function from \eqref{eq:massGirsanovIto}.

Although this condition could appear somewhat artificial, we note that this condition is actually \textbf{invariant under conformal transformations}. More precisely, fix $T:\Omega \to \tilde \Omega$ a conformal isomorphism of simply connected domains, and let $X$ be a solution of the Langevin SDE \eqref{Langevin_intro}, where we assume $\frac12\Delta \ph + \frac12\|\nabla\ph \|^2 \ge 0$. Then $T (X_t)$ is, up to a time-change, a solution of the SDE:
\begin{equation}
    dY_t = d\tilde B_t + \nabla \tilde \ph (Y_t) dt.
\end{equation}
This is also a Langevin SDE \eqref{Langevin_intro}, where the new potential $\tilde \ph$ is simply given by 
$$
\tilde \ph (y) = \ph (T^{-1}(y)).
$$
From there it is not hard to see that 
$$
\Delta \tilde \ph (T(x)) = \Delta \ph (x) \cdot |T'(x) |^2; 
$$
(this is best seen by computing the Laplacian via Wirtinger derivatives). Since 
$$\|\nabla \tilde \ph (T(x))\|^2 = |T'(x)|^2 \| \nabla \ph(x)\|^2,$$ 
we deduce that the associated mass function $\tilde \rho $ satisfies 
\begin{equation} \label{eq:masscovariance}
\tilde \rho (y) = |T'(x)|^2 \rho (x); \quad y = T(x). 
\end{equation}
Thus $\rho \ge 0 $ if and only if $\tilde \rho \ge 0.$

\medskip Physically, the assumption \eqref{eq:masscondition} corresponds to a potential that tends to push the diffusion towards the boundary. In particular, if $\ph$ is convex then this condition is satisfied. (Note that our $\ph$ follows an unusual sign convention: the Langevin diffusion is pushed towards higher potential instead of the more commonly adopted convention of lower potentials). In fact, note that $\Delta (e^\ph) = (\Delta \ph + \| \nabla \ph\|^2 ) e^\ph$ so our assumption \eqref{eq:masscondition} is precisely that $e^\ph$ is convex. In other words $\ph $ is a \textbf{log-convex} function. (We warn the reader that with the more usual sign conventions then $\ph$ would be the negative of a log-convex function but not log-concave.)

\begin{rem}
The relation \eqref{eq:masscovariance} is a \textbf{conformal covariance relation for the mass functions}. 
\end{rem}

We now address the consequence of Theorem \ref{T:LERWgeneral_drift} for the dimer model. Let $\alpha =  \nabla \ph$ be a vector field deriving from a smooth potential $\ph:\bar \Omega \to \R$ satisfying \eqref{eq:masscondition}. To the weights $e^{\alpha_{k}(v)}$ in \eqref{E:weights3} we can associate edge weights on $\Gdim$ in a bipartite fashion similar to \eqref{F:double}. The only difference with what was discussed in Section \ref{SS:offcriticalintro} is that now the weights $a_0, a_1, a_2$ depend on the point $v$. We call this the \textbf{inhomogeneous massive dimer model}.

Nevertheless, Temperley's bijection still applies: thus dimer configurations on $\Gdim$ are in (measure-preserving) bijection with wired spanning trees on $\Gtree$. Using results from \cite{BLR_DimersGeometry}, we deduce from Theorem \ref{T:LERWgeneral_drift} that the height function $h_\delta^{(\alpha)}$ of the corresponding dimer model converges to a scaling limit (this generalises Corollary \ref{C:height} to the variable drift setting).
Furthermore, under the additional technical restriction that the conformal map $T: \Omega\to\tilde \Omega $ extends analytically to a neighbourood of $\Omega$, the limit is conformally covariant. 

\begin{thm}
  \label{T:covariance} Fix $\Omega$ and $\alpha = \nabla \ph$ satisfying \eqref{eq:masscondition} as above. The height function $h_\delta^{(\alpha)}$, of the corresponding biperiodic dimer model just described, has a scaling limit which we denote by $h^{(\alpha);\Omega}$.
  This convergence is in the sense of joint convergence of the field integrated  against an arbitrary number of test functions. 
 Furthermore, let $T: \Omega \to \tilde \Omega$ denote a conformal isomorphism of bounded simply connected domains, and suppose that $
 T$ extends analytically to a neighbourhood of $\Omega$.  Then we have the identity in law,
  $$
  h^{(\alpha);\Omega } \circ T^{-1}= h^{(\tilde \alpha); \tilde \Omega}
  $$
  where at a point $w \in \tilde \Omega$,
\begin{equation}
\tilde \alpha (w) = \overline{(T^{-1})'(w)} \cdot \alpha (T^{-1}( w)).
\end{equation}
The product above refers to the multiplication of complex numbers; and this drift vector field $\tilde \alpha$ derives from the potential $\ph \circ T^{-1}$.
\end{thm}


To explain the theorem, we point out that the new drift vector field $\tilde \alpha$ in $\tilde \Omega$
 has an amplitude which, compared to that of $\alpha$ in $\Omega$, has been scaled by 1 over the modulus of the derivative of the conformal map going from $\Omega$ to $\Omega'$, and the vector has been rotated (in the positive direction) by the argument of its derivative. This is the desired conformal covariance rule. Once again, we point out that this formula may be simply understood in terms of conformal covariance of Langevin diffusions. Simply put, the above rule describes (by It\^o's formula and the Cauchy--Riemann equations) the change of coordinates for a Brownian motion with drift $\alpha = \nabla \phi$.

\subsection{Comments and open problems}

\begin{enumerate}

\item The limiting height function $h^{(\alpha); \Omega}$ is determined implicitly from the scaling limit of the associated Temperleyan tree. A natural question would be to identify its law explicitly. For this the \textbf{Coleman correspondence} (see \cite{BauerschmidtWebb} which establishes a rigorous version) is a natural starting point. Briefly speaking, the Coleman correspondence can be viewed as a massive extension of the boson-fermion correspondence, embodied (in the critical case) by the convergence of the dimer (= fermionic) height function to the Gaussian (= bosonic) free field. This suggests that the height function $h^{(\alpha)}$ should be related in the scaling limit to the so-called \textbf{Sine-Gordon model} at the free fermion point, from quantum field theory. The latter is one of the most canonical quantum (yet not conformal) field theories. Despite its non-conformal nature, it enjoys a great deal of integrability. Informally, the sine-Gordon field is defined (in the whole plane) by the law 
\begin{equation}\label{eq:SG1}
\P^{\text{SG}} ( \dd h) \propto \exp \left( z \int_\C \cos ( \sqrt{\beta } {h (x)}) dx \right) \P^{\mathrm{GFF}\#} (\dd h),
\end{equation}
where $\P^{\mathrm{GFF}\#} (\dd h)$ corresponds to the law of a Gaussian free field in $\Omega$ (with Dirichlet boundary conditions) but normalised so that the whole plane Green function satisfies $G^\#_\C(x,y) = - (2\pi)^{-1} \log |x-y| $.

The above expression is however purely formal, as the cosine of (multiples) of the GFF is ill-defined. While this can be made sense of using the theory of imaginary chaos (\cite{JunnilaSaksmanWebb}) for all $\beta< 4\pi$, the free fermion point (corresponding to $\beta = 4\pi$) falls just outside the regime where this theory yields a nontrivial object. 

We conjecture however that the Sine-Gordon field above describes the limit of the dimer height function only in the case of constant mass/drift (and assuming also that the drift vector field points to the right, or that $\Omega$ is the full plane). 
 More generally, we conjecture the following description for the limiting height function for the inhomogeneous massive dimer model (with weights \eqref{E:weights3}), given any vector field deriving from a smooth potential $\ph: \bar \Omega \to \R$. 

 \begin{conj}\label{conj:SG} Let $\P^{(\alpha); \Omega}$ denote the law of the field $h^{(\alpha); \Omega}$ in Theorem \ref{T:covariance}. Then 
\begin{equation}\label{eq:SG2}
\P^{(\alpha); \Omega} (\dd h) \propto \exp \left( z_0 \int_\Omega \left\langle e^{\mathbf{i} {h(x)}/{\chi} } , \alpha(x) \right\rangle dx \right) \P^{\mathrm{GFF}} (\dd h).
\end{equation}
\end{conj}

Again this expression is informal and assigning it a meaning is itself nontrivial. The factor $z_0$ in front of the integral comes from conventions such as the normalisation of the Laplacian and that of the limiting drift $\alpha$.
But note how the expression \eqref{eq:SG2} reduces to that in \eqref{eq:SG1} at the free fermion point when $\Omega$ is replaced by the whole plane. Indeed, first of all the normalisation of the GFF in $\P^{\mathrm{GFF}}$ and $\P^{\mathrm{GFF}\#}$ differ by a factor of $\sqrt{2\pi}$. Thus $h^\# = (2\pi)^{-1/2} h$, so that $\sqrt{\beta} h^\# = (1/\chi) h$ when $\beta = 4\pi$ and $\chi = 1/\sqrt{2}$ is the imaginary geometry constant associated to $\kappa =2$.

Furthermore, when $\Omega = \C$ then by rotational invariance, then $\langle e^{\mathbf{i} {h(x)}/{\chi} } , \alpha(x) \rangle$ has the same law as $\| \alpha (x) \| \cos (\sqrt{2} h(x) ) $. Thus taking $\alpha(x) = \alpha$ to be constant the expression \eqref{eq:SG2} indeed boils down to \eqref{eq:SG1} with $z = z_0\| \alpha\|$. Since $z$ is the mass parameter of the Sine-Gordon model, this is entirely consistent with our Theorem \ref{T:convergenceTree} (and with $z_0 = 1/\sqrt{2}$ in our choice of conventions for the normalisation of the Laplacian).

The above conjecture is informally supported by the imaginary geometry approach to the dimer model (\cite{BLR_DimersGeometry}). Informally, this conjecture says that massive SLE$_2$ is (in some sense) a \textbf{flow line} of the Sine-Gordon field at the free fermion point. We do not know whether this should hold away from the free fermion point, but it is tempting to conjecture so. (Recall that for $\beta<4\pi$ the Sine-Gordon field is absolutely continuous with respect to a GFF so that the notion of flow line is at least well defined).

\item A possible approach to the above (which is also of independent interest in its own right) is the following: can an axiomatic characterisation of this field be given in the manner of \cite{BPR, BPR2, AruPowell}? (This last question is due to Christophe Garban who asked it in a slightly different form.)

\item A separate line of enquiry concerns the possible implications of our results to the study of the Ising model. By bosonization, it is known that the critical Ising model is related to the critical dimer model (\cite{Dubedat_bosonization}). This correspondence remains at least partly valid in the near-critical regime studied here, but we do not know whether the corresponding Ising model is near-critical in the sense of commonly studied perturbations of the critical Ising model (see in particular, \cite{DC_Garban_Pete}, \cite{ChelkakIzyurovMahfouf}, \cite{Park}, \cite{CamiaJiangNewman_review} and references therein).

\item Finally we have developed a near-critical dimer theory on the square and hexagonal lattices using the symmetries of these lattices, but it would be of considerable interest to have a theory in some more general setting, e.g., for double isoradial graphs (i.e., superposition of an isoradial graph and its dual) since we know for instance that the Temperleyan bijection extends to this setting (\cite{KenyonProppWilsonGeneralizedTemperley}).

\end{enumerate}

\emph{Updates}. Since the paper was first put on arXiv we can report on a few developments in the direction of the above conjectures. 

\begin{enumerate}
    \item
On the one hand, Mason \cite{Mason} showed that in the full plane and in the case of constant drift, the two-point correlation of the limiting massive dimer height function coincides with that of the free fermion Sine-Gordon field.

\item Separately, Papon considered the case $\kappa = 4$ of Makarov and Smirnov's programme (\cite{papon2023massiveHarmonic, papon2023massiveCLE}). Roughly speaking, she shows in these articles the following results: convergence of the massive harmonic explorer to massive SLE$_4$, and a conformal invariance property analogous to Theorem~\ref{T:covariance}. Furthermore, level lines of the massive GFF are given by massive variant of CLE$_4$, and the occupation field of a massive Brownian loop soup coincides with the square of the massive GFF. The pairwise relations between these three objects hold simultaneously, as in the work of Qian and Werner for the non-massive case (\cite{qianWerner2019decomposition}). 

\item Finally, Rey \cite{rey2024} has developed a Girsanov identity for isoradial graphs and applied it to massive dimer models, thereby generalising the results of this paper. 

\end{enumerate}


While it is not the purpose of this paper to give an extensive overview of recent works on near-critical models, we feel it is appropriate to conclude this introduction by mentioning some which are at least in spirit motivated by similar questions albeit for different models.  These include, beyond the already mentioned works on near-critical percolation \cite{GarbanPeteSchramm_nearcritical} and the near-critical Ising model \cite{ChelkakIzyurovMahfouf}, \cite{Park}, \cite{CamiaJiangNewman_review} and \cite{DC_Garban_Pete}, the work of Duminil-Copin and Manolescu on scaling relations in the random cluster model \cite{DC_Manolescu_scaling}), the work of Benoist, Dumaz and Werner \cite{BenoistDumazWerner} on near-critical spanning forests, and Camia's work on off-critical Brownian loop soup \cite{Camia_off}.

\subsection{Notation and Scaling}\label{sec:notation}
The triangular lattice $\mathbb T$ always refers to the \emph{directed} triangular lattice, in which each edge has been directed in the respective direction $1, \tau$ or $\tau^2$, i.e. when we speak of a random walk on this lattice only steps in those three directions are allowed.

We recall that $
\Omega^\delta$ is the graph on which all random walk paths will leave (a scaled copy of the triangular lattice or the square lattice, approximating $\Omega$), which is often identified with its vertex set.  In this paper several measures on such lattice paths appear.
For the convenience of the reader we collect the most important ones here.

\begin{itemize}
    \item The simple random walk measure $\P^{(0)}$, which takes all possible steps with equal probability. Note that if $X^\delta$ has law $\P^{(0)}$ then $(X_{2\delta^{-2}t})_{t\ge 0}$ converges to a standard planar Brownian motion (this holds both on $\delta \T$ and on $\delta \Z^2$). 
    
    \item Given a function $\rho^\delta: \Omega^\delta \to [0,1]$, the massive random walk measure $\P^{(\rho^\delta)}$ is the massive random walk, dying at each step with probability $\rho^\delta(v)$ if it is in position $v \in \Gtree$ (and otherwise jumping to one of its neighbours with equal probability).
    
    \item Consider the triangular lattice case. Given $\alpha^\delta: \Gtree \to \R^2$ a discrete vector field, the random walk with variable drift $\P^{(\alpha^\delta)}$ takes steps according to \eqref{eq:driftweights}, i.e., the walk jumps from $v$ to $v+ \delta \tau^k$ with probability proportional to $e^{\alpha_k(v)}$ $(k=0, 1, 2)$, where $\alpha_0(v), \alpha_1(v), \alpha_2(v)\in \R$ are uniquely defined by the requirements $\sum_{k=0}^2 \alpha_k(v) = 0$ and $\alpha(v) =\tfrac23 \sum_{k=0}^2 \alpha_k(v)  \tau^k$.

\end{itemize}

  Here we have defined the law of random walk with drift $\alpha^\delta$, $\P^{(\alpha^\delta)}$, only in the case of the triangular lattice. Obviously, an analogous definition can be given in the case of the square lattice too; this will be made explicit in Section \ref{S:square} when it is needed.
  
  The weights $\alpha^\delta(v)=\alpha^\delta(v)$ typically depend on both $\delta$ and $v$. When there is no risk of ambiguity we will sometimes write $\P^{(\alpha)}$ for $\P^{(\alpha^\delta)}$, and likewise we will write $\P^{(\rho)}$ for $\P^{(\rho^\delta)}$ if there is no risk of confusion. 
  
  Usually we view them as measures on the canonical path space, whose corresponding random variable is denoted by $X^\delta_t,t=0,1,\dots$. Sometimes however, given a discrete path $\gamma^\delta=(x_0,\dots,x_n)$ we write $\P(\gamma^\delta)$ for $\P ( (X^\delta_s)_{s= 0, \ldots, n} = (\gamma^\delta_s)_{s= 0, \ldots, n})$, where $\P$ is any of the laws above.

Note that $\P^{(0)}$ is a special case of all of these measures, setting the respective parameters to $0$.

\medskip For the appropriate weights these random walks have scaling limits.
For instance, if $\alpha^\delta(v)=\delta F(v) +o(\delta)$ for some bounded Lipschitz-continuous $F$, then this random walk converges to the solution of the SDE
    \[
    \dd X_t=F(X_t)\dd t+ \dd B_t\,.
    \]
let $\ph:\Omega \to \R$ be a smooth function. Noting the fact that $\nabla^{\delta\mathbb T}\ph(v)=\delta \nabla\ph(v)+o(\delta)$ (recall our conventions for the discrete gradient in \eqref{eq:discretegradient}), this implies that if $\alpha^\delta(v) = \nabla^{\delta\mathbb{T}}( \ph)$, then
the random walk corresponding to $\P^{\alpha^\delta}$ converges in the scaling to the Langevin diffusion
    \[
    \dd X_t=\nabla\ph(X_t)\dd t+ \dd B_t\,.
    \]
That is, $(X^\delta_{2\delta^{-2} t })_{t\ge 0}$ converges weakly under $\P^{(\alpha^\delta)}$ to the above Langevin diffusion.
    
Likewise, in the massive case,  suppose that $\rho^\delta(v)={\delta^2}\rho(v)/2+o(\delta^2)$.
Then the random walk corresponding to $\P^{(\rho^\delta)}$,  converges (under the same scaling) to massive Brownian motion with profile $\rho$, i.e. its law converges to the measure $\P_x^{(\rho)}$ whose Radon--Nikodym derivative with respect to Brownian motion is given by
\[
\left.\frac{\dd \P^{(\rho)}_x}{\dd \P_x}\right|_t=\exp\left(-\int_0^t\rho(X_s)\dd s\right)\,.
\]

\paragraph{Organisation of the paper.} In Section \ref{section:DriftMass} we state and prove the discrete Girsanov identities and explain the implication for the connection between drifted and massive walks which lies at the heart of this paper. In Section \ref{section:convergence} we extend Chelkak and Wan's result about the convergence of the massive LERW to massive SLE$_2$ to the directed triangular case; the additional difficulty compared to their setup is the lack of reversibility. At this stage Theorem \ref{T:convergenceTree} are proved. 

In Sections \ref{S:LERW_general} and \ref{sec:drift} we show how to get the existence of a scaling limit for loop-erased random walk on graphs where the drift is a variable function of the vertices given by the gradient of a potential (in particular, the scaling limit of the random walk is given by a Langevin diffusion).
Finally in Section \ref{sec:drift} we transfer results about convergence of trees to convergence of height function (which implies in particular the conformal covariance of Theorem \ref{T:covariance}).

\paragraph{Acknowledgements.} Both authors thank Dima Chelkak and Marcin Lis for a number of illuminating conversations, as well as Christophe Garban for a number of comments on a draft. Fredrik Viklund made a number of useful suggestions which among other things helped improve the presentation. Research of the first author is supported by FWF grants 10.55776/P33083, ``Scaling limits in random conformal geometry'' and SFB grant 10.55776/F1002, ``Discrete Random Structures: enumeration and scaling limits''.

Part of the paper was written while the first author was in residence at the Mathematical Sciences Research Institute in Berkeley, California, during
the Spring 2022 semester on \emph{Analysis and Geometry of Random Spaces}, which was supported by the National Science Foundation under Grant No. DMS-1928930.

\section{Girsanov identity; proof of Theorem  \ref{T:convergenceTree}}\label{section:DriftMass}

In this section we start with a proof of Theorem \ref{T:convergenceTree}, which we prove separately in the case of the square and hexagonal lattices. As mentioned the result will follow from applying a form of Temperley's bijection and studying the scaling limit of the corresponding loop-erased random walk (which describe branches in the spanning tree by Wilson's algorithm). Since Temperley's bijection is not so well known in the case where $\Gdim$ is a subgraph of the hexagonal lattice, we start by explaining the bijection in this case, which can also be found (albeit somewhat informally) in Section 2 of \cite{KenyonProppWilsonGeneralizedTemperley}; see in particular their Figure 2.

\subsection{Temperley's bijection on the hexagonal lattice}\label{SS:Tbij_hex}

As mentioned in Section \ref{sec:Temperleybijintro}, Temperley's bijection relates rooted spanning trees on a graph $\Gtree$ (already discussed in Section \ref{sec:temperleyan}) to dimers on the graph $\Gdim$, a Temperleyan subgraph of the hexagonal lattice. We start by describing how $\Gtree$ and $\Gdim$ are related to one another. The bijection itself will be stated in Theorem \ref{T:Tbij} and is illustrated in Figure \ref{fig:pretty}.

\begin{figure}
    \centering

    \includegraphics[height=6cm]{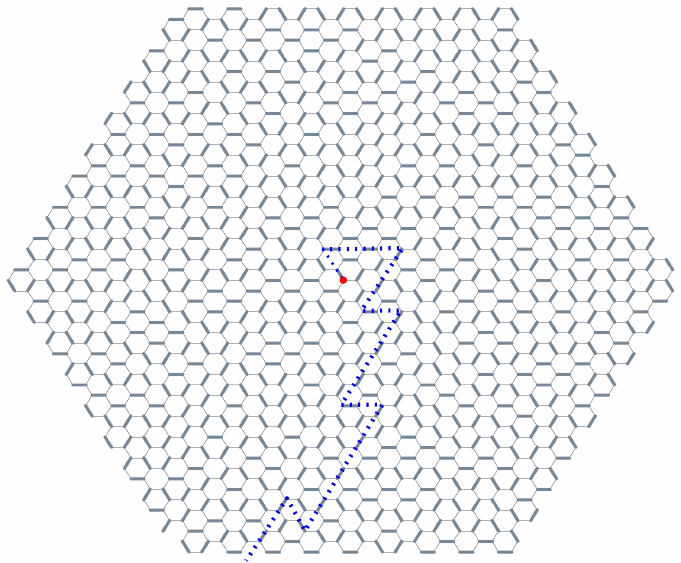}
    \includegraphics[height=6cm]{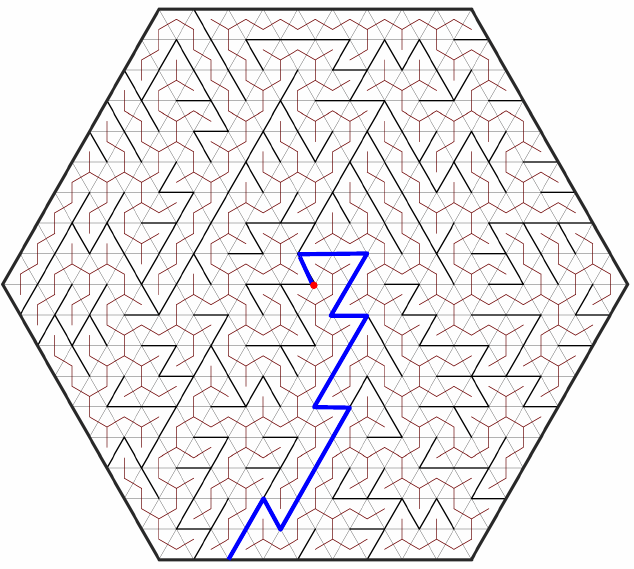}
    \caption{A dimer configuration on the hexagonal lattice, and its associated
   pair of dual spanning trees from Tempereley's bijection. The outer (i.e., boundary) vertex is represented as a black hexagon for convenience. The unique path connecting a vertex $v$ (at the centre of the hexagon) to the boundary has been highlighted on the tree; the corresponding path on the dimer graph appears as a dotted line. Each dimer on this path can be viewed as the first half of the corresponding tree edge. Conversely, we obtain the tree by multiplying by two each dimer emanating from a $B_1$ vertex, in the direction from black to white.}
    \label{fig:pretty}
\end{figure}

Consider the triangular lattice, that is the graph whose vertices are given by $a+b\tau$, where $a, b \in \delta \Z$ are integers (times $\delta$) and $\tau = e^{ 2 {\mathbf i} \pi /3}$ is the third root of unity, and where each pair of vertices at distance $\delta$ is connected by an edge.
We will give each edge an orientation, such that it is oriented in direction $1$, $\tau$ or $\tau^2$ and a weight, which is $a_1,a_2$ or $a_3$ accordingly.
This gives a directed graph in which each vertex has three outgoing and three incoming edges.
We will call this graph the directed triangular lattice and denote it by $\mathbb T$ throughout this article.

Let us now choose a simply connected set of vertices of $\mathbb{T}$ and identify all other vertices as a single \textbf{outer vertex}. We call the resulting graph $\Gtree$.
A \textbf{spanning tree of $\Gtree$ rooted at the outer vertex} is a spanning set of edges containing no cycle. By orienting the edges to wards the root of the tree (the outer vertex), any such tree is equivalent to a spanning \emph{arborescence}, i.e., a collection of directed edges such that there is exactly one outgoing edge from each non-root vertex (and none at the root), and having no cycle (irrespective of the orientation). This point of view is useful in Temperley's bijection since edges come with a natural orientation.

(Sometimes such a tree is called an arborescence).
By definition we assign a weight to a rooted spanning tree given by the product of the weights of the edges in the tree.

Now consider the superposition graph $H^*$ obtained in the following way.
The vertices of $H^*$ are the vertices, edges and faces of $\Gtree$.
To avoid terminological confusion, call the vertices of $H^*$ nodes and call them vertex-nodes, edge-nodes and face-nodes depending on their counterpart in $\Gtree$.
The edges of $H^*$ are called links and are defined as follows: connect a vertex-node $v$ and an edge-node $e$ if $e$ is an outgoing edge of $v$ in $\Gtree$ and give this link the same weight as $e$ in $\Gtree$.
Also connect an edge-node $e$ and a face-node $f$ if $e$ is adjacent to $f$ in $\Gtree$, and assign weight $1$ to such links.
Finally obtain $\Gdim$ from $H^*$ by deleting the vertex-node corresponding to the outer vertex and one face-node for a face adjacent to the the outer vertex. Note that the vertex nodes of $H^*$ are the $B_1$ vertices of the hexagonal lattice, while edge-nodes are white. (The face nodes of $H^*$ are either of type $B_2$ or $B_3$.) 
For an illustration of this procedure see Figure \ref{fig:pretty}.

The graph $\Gdim$ obtained this way is exactly a Temperleyan domain of the hexagonal lattice as defined in Section \ref{sec:temperleyan}, and by choosing $\Gtree$ as the directed triangular lattice formed by the $B_1$ vertices in such a domain, it is also clear that each Temperleyan subgraph of the hexagonal lattice can be obtained in this way.
The weights on this graph are as in Figure \ref{F:double}. The relevant version of Temperley's bijection is then the following:
\begin{thm}[\cite{KenyonProppWilsonGeneralizedTemperley}]\label{T:Tbij}
There is a weight preserving bijection between spanning trees of $\Gtree$ rooted at the outer vertex (i.e., spanning arborescences rooted at the outer vertex) and dimer configuration on $\Gdim.$
\end{thm}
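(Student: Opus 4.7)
The plan is to construct an explicit bijection $T \mapsto M(T)$ in both directions, with the classical planar duality of spanning trees as the essential combinatorial input. Given a spanning arborescence $T$ on $\Gtree$ rooted at the outer vertex, I will produce a dimer configuration $M(T)$ on $\Gdim$, construct its inverse, and verify weight preservation.

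For the forward map, the construction proceeds in two stages. First, for each non-outer vertex-node $v$ of $\Gdim$, the arborescence $T$ contains a unique outgoing edge from $v$ in $\Gtree$; match $v$ to the edge-node of this edge. This consumes every vertex-node of $\Gdim$ together with exactly those edge-nodes corresponding to tree edges. Second, to match the remaining edge-nodes (those of non-tree edges) to the face-nodes, invoke the classical planar duality of spanning trees: the complement of a spanning tree $T$ in a connected planar graph forms a spanning tree of the planar dual. Root this dual spanning tree at the face-node that was removed from $H^*$ to form $\Gdim$. Each non-root face-node $f$ then has a unique outgoing edge toward the root in this rooted dual tree; let $e(f)$ denote the corresponding primal non-tree edge, and match $f$ to the edge-node of $e(f)$. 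A simple count (each non-tree edge borders exactly two faces; each face-node has degree three in $H^*$) confirms that this produces a perfect matching of $\Gdim$.

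For the inverse map, given a dimer configuration $M$ on $\Gdim$, define $T(M) \subset \Gtree$ by declaring that the directed edge $u \to v$ belongs to $T(M)$ if and only if the vertex-node $u$ is matched by $M$ to the edge-node of $u \to v$. By construction every non-outer vertex of $\Gtree$ has exactly one outgoing edge in $T(M)$, so it remains only to show that $T(M)$ contains no directed cycle, which then forces it to be a spanning arborescence rooted at the outer vertex. Suppose for contradiction that $T(M)$ contained a directed cycle $C$. Then the face-nodes strictly enclosed by $C$ could only be matched to edge-nodes of non-tree edges (edge-nodes of tree edges are already consumed by vertex-nodes), and all such matchings would be confined to the interior region bounded by $C$; but this would disconnect this collection of face-nodes from the removed face-node in the dual graph, contradicting the fact that the non-tree edges of any spanning tree form a connected spanning tree of the planar dual. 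This no-cycle step is the combinatorial heart of the argument and the main obstacle.

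Finally, weight preservation is immediate from the definitions: a link from a vertex-node $v$ to an edge-node in $\Gdim$ carries the same weight (one of $a_1, a_2, a_3$) as the corresponding oriented edge in $\Gtree$, whereas every face-to-edge-node link has weight $1$. Consequently the weight of $M(T)$ equals $\prod_{e \in T} w(e) = w(T)$, as required, and the two maps $T \mapsto M(T)$ and $M \mapsto T(M)$ are inverses of one another by inspection.
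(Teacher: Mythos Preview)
Your approach matches the paper's: the paper only sketches the bijection (the result is cited from \cite{KenyonProppWilsonGeneralizedTemperley}), describing the dimers-to-trees direction exactly as you do and dismissing the no-cycle step with the phrase ``by duality considerations.'' You give more detail than the paper in both directions, and the forward construction via the dual spanning tree is correct.

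There is, however, a genuine gap in your no-cycle argument. You conclude by writing that the confinement of the interior face-nodes ``contradict[s] the fact that the non-tree edges of any spanning tree form a connected spanning tree of the planar dual.'' But that fact applies only once $T(M)$ is known to be a spanning tree, which is precisely what you are trying to establish; as written the step is circular. The confinement you describe is correct and is the right starting point, but it does not by itself contradict anything about spanning trees.

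The standard fix is an Euler-characteristic parity count. If $T(M)$ contained a directed cycle $C$ of length $\ell$, choose the side of $C$ not containing the removed face-node. All nodes of $\Gdim$ on that side (after removing the $\ell$ vertex-nodes and $\ell$ edge-nodes of $C$, which are matched to each other) must be perfectly matched among themselves, by the confinement you already argued. But if that region has $V'$ vertices, $E'$ edges and $F'$ faces of $\Gtree$, Euler's formula for a disc gives $V'-E'+F'=1$, so the number of black nodes $(V'-\ell)+F'$ exceeds the number of white nodes $E'-\ell$ by one, and no perfect matching exists. This is the ``duality consideration'' the paper alludes to; replacing your last sentence with this count closes the gap.
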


The bijection is easier to describe in the direction ``dimers'' to ``trees'': given a dimer configuration $\mathbf{m} $ on $\Gdim$, define a collection $\cT$ of oriented edges in $\Gtree$ as follows: for every dimer occupying a link between a vertex-node $v \in v(\Gtree)$ and an edge node $e \in E( \Gtree)$, include the outgoing edge $e$ from $v$ to $\cT$. One can check that the resulting collection of edges $\cT$ is a spanning tree in the sense above. (Essentially, to every vertex $v \in v( \Gtree)$ there is a unique outgoing edge containing $v$ in $\cT$ by definition of the dimer model and of $\cT$; following the outgoing edges from a given vertex $v \in v(\Gtree)$ may not result in a cycle by duality considerations, and thus necessarily ends at the outer vertex -- this is the unique path to the outer vertex in the definition).   Once again, we refer to Figure \ref{fig:pretty} for illustration.

\subsection{Proof of Theorem \ref{L:girsanov_discrete_tri} and relation to massive walk}

We consider first the case of the triangular lattice and give the proof of Theorem \ref{L:girsanov_discrete_tri}, and recall that here we work  on the unscaled lattice $\mathbb{T}$ rather than the scaled lattice $\delta \mathbb{T}$.

\begin{proof}[Proof of Theorem \ref{L:girsanov_discrete_tri}]
Let $n_0 = n_0(v),n_1 = n_1(v)$ and $n_2 = n_2(v)$ be the number of steps taken by $\gamma$ from $v$ in the directions $1,\tau$ and $\tau^2$ respectively. Then

\begin{align*}
    {\mathbb P_x^{(\alpha)}(\gamma)} 
    &=  \prod_{v \in v(\Omega)} \prod_{k=0}^2(\frac{e^{\alpha_k}}{a})^{n_k}
    = 3^{-n} \prod_{v\in v(\Omega)}  \left[ ( (a/3)^{-(n_0+n_1+n_2)}\prod_{k=0}^2 (e^{\alpha_k})^{\frac{n_0+n_1+n_2}{3}}\prod_{k=0}^2(e^{\alpha_k})^{n_k-\frac{n_0+n_1+n_2}{3}} \right]\\
     & = 3^{-n}  \prod_{v \in v(\Omega)} e^{ - \beta(v)^2 \tfrac{n_0+n_1+n_2}{3}} \exp \left(  \sum_{k=0}^2 \alpha_k (n_k - \tfrac{n_0 + n_1 + n_2}{3} )\right)\\
     &= 3^{-n} e^{-\tfrac12 V_n } \exp \left(  \sum_v  \alpha_0 ( \tfrac{2n_0 - n_1 - n_2}{3} + \alpha_1 ( \tfrac{2n_1 - n_0 - n_2}{3} ) + \alpha_2 ( \tfrac{2 n_2 - n_0 - n_1}{3} )\right)\\
    &=  3^{-n} e^{-\tfrac12 V_n } \exp \left( \tfrac{2}{3}  \sum_v \langle \alpha_0 + \alpha_1 \tau + \alpha_2 \tau^2 , n_0 + n_1\tau + n_2\tau^2 \rangle\right)
\end{align*}
where we have used in the last line that $\langle 1 ,\tau \rangle = \langle 1 ,\tau^2 \rangle = \langle \tau, \tau^2 \rangle = - 1/2$. To conclude, simply observe that each $\dd x_s$ contributes exactly $1$, $\tau$ or $\tau^2$ exactly $n_0$, $n_1$ or $n_2$ times respectively. Therefore,
\begin{equation}\label{e:n_to_x}
\frac23  \sum_v \langle \alpha_0 + \alpha_1 \tau + \alpha_2 \tau^2 , n_0 + n_1\tau + n_2\tau^2 \rangle = \sum_{s=0}^{n-1} \langle \alpha(x_s), \dd x_s\rangle = M_n,
\end{equation}
so that
$$
\frac{\mathbb P_x^{(\alpha)}(\gamma)} {\mathbb P_x^{(0)}(\gamma)} = \exp ( M_n - \tfrac12 V_n),
$$
as desired.
\end{proof}

Before we proceed with the case of constant drift let us first consider the case of drift of gradient type.
\begin{proof}[Proof of Corollary~\ref{cor:girsanovpotential}]
    Consider a single summand of $M_n$.
    Let $j$ be such that $x_{s+1}-x_s=\tau^j$
    \begin{align}
        &\langle\nabla^\mathbb T\Phi(x_s), x_{s+1}-x_s\rangle=
        \frac23\sum_{i=0}^2(\Phi(x_s+\tau^i)-\Phi(x_s))\langle\tau^i,\tau^j\rangle=\\
        &\frac23\left(\Phi(x_s+\tau^j)-\Phi(x_s)
        -\frac12(\Phi(x_s+\tau^{j+1})-\Phi(x_s))
        -\frac12(\Phi(x_s+\tau^{j+2})-\Phi(x_s))\right)=\\
        &\Phi(x_{s+1})-\Phi(x_s)-\Delta^\mathbb T\Phi(x_s).
    \end{align}
    Telescoping the first term gives the desired result.
\end{proof}

\begin{rem}
    Since Theorem \ref{L:girsanov_discrete_tri} and Corollary \ref{cor:girsanovpotential} are just statements about the random walk on the triangular lattice, i.e. independent of the embedding of this graph, we chose to state it for the unscaled lattice $\T$.
    However, for convenience let us describe what these results become when we scale the triangular lattice, as this will be the situation of interest in the rest of the article.
    Thus, let us assume that we are given $\alpha^\delta:\delta\T\to\R$, such that $\alpha^\delta(v)=\delta\alpha(v)+o(\delta)^2$. Let $\alpha_0^\delta,\alpha_1^\delta,\alpha_2^\delta$ be  associated weights such that $\alpha^\delta=\frac23\sum_{k=0}^2\alpha_k^\delta\tau^k$ (these are defined only up to a common additive constant, as $1+ \tau + \tau^2 = 0$).

    Then, as will be checked in Lemma \ref{L:massdrift_tri}, the corresponding factor $\beta(v)$ (which does not depend on the choice of the above constant) will be of order $\delta^2$.
    The statement of Theorem \ref{L:girsanov_discrete_tri} remains unchanged except that one has
    \begin{equation}\label{eq:Mnscaled}
    M_n=\sum_{s=0}^{n-1}\langle \delta^{-1}\alpha^\delta(x_s),\dd x_s\rangle\,.
    \end{equation}
    The additional factor $\delta^{-1}$ compared to  \eqref{e:n_to_x}, comes from the scaling of the triangular lattice: in \eqref{e:n_to_x} we had used that $\dd x_s\in\{1,\tau,\tau^2\}$,
    but on the scaled triangular lattice one has instead $\tfrac{\dd x_s}{\delta} \in \{1, \tau, \tau^2\}$, so we need to add a factor of $\delta^{-1}$ to compensate.
    The fact that the terms in the sum defining $M_n$ in \eqref{eq:Mnscaled} are each of order $\delta$, while the summands in the sum defining $V_n$ are of order $\delta^2$ is consistent with the fact that $M_n$ converges to a stochastic integral and $V_n$ converges to a finite variation integral (with $n$ of order $\delta^{-2}$ in both cases).

    A similar remark applies to Corollary \ref{cor:girsanovpotential} when we scale the triangular lattice. The assumption $\alpha(v)=\nabla^\T\Phi(v)$ becomes
    \[
    \alpha^\delta(v)=\nabla^{\delta\T}\ph(v):=\frac{2}{3}\sum_{s=0}^{2} (\ph(v+\tau^s)-\ph(v))\tau^s=\delta\nabla\ph(v)+o(\delta^2)\,,
    \]
    for $v\in\delta\T$, and smooth $\ph:\Omega\to\R$.
    Using
    \[
    \Delta^{\delta\T}\ph(v):=\frac13\sum_{s=0}^2\ph(v+\tau^s)-\ph(v)=\frac{\delta^2}{4} \Delta \ph(v) +o(\delta^2)
    \]
    in place of $\Delta^\T$ the statement remains unchanged.
\end{rem}

\subsection{Statement of the theorem about LERW}

We may now state the theorem needed for the proof of Theorem \ref{T:convergenceTree}. Let $\Gdim$ be as in Theorem \ref{T:convergenceTree} and let $\Gtree$ denote the embedded graph on which the tree obtained from the Temperleyan bijection lives; thus $\Gtree$ is either a portion of the scaled square lattice or of the (directed) triangular lattice, and is embedded within the domain $\Omega$. With an abuse of notation, we often identify the vertex set $v(\Gtree)$ of $\Gtree$ with $\Gtree$ itself. Consider the random walk on $\Omega^\delta$ arising from the weights \eqref{eq:weights_hex} (resp. \eqref{eq:weights_square}). Observe that in either case, the corresponding law is of the form $\P^{(\alpha^\delta)}$ with $\alpha^\delta = \delta \alpha + o(\delta)$ does not depend on $v$, where $\alpha$ is as in \eqref{E:alpha_intro}. For instance, in the case of the triangular lattice, we define $\alpha_k^\delta $ ($0\le k \le 2$) by
\begin{equation}\label{E:trialpha}
\exp(\alpha^\delta_k)=a_k=1+c_k\delta
\end{equation}
so  $\alpha^\delta=\tfrac23\sum_{k=0}^2\alpha_k^\delta\tau^k =  \tfrac{2}{3}\delta \sum_{k=0}^2 c_k \tau^k + o(\delta) = \delta \alpha + o(\delta)$.

\begin{figure}
    \centering
    \includegraphics[width=0.4\textwidth]{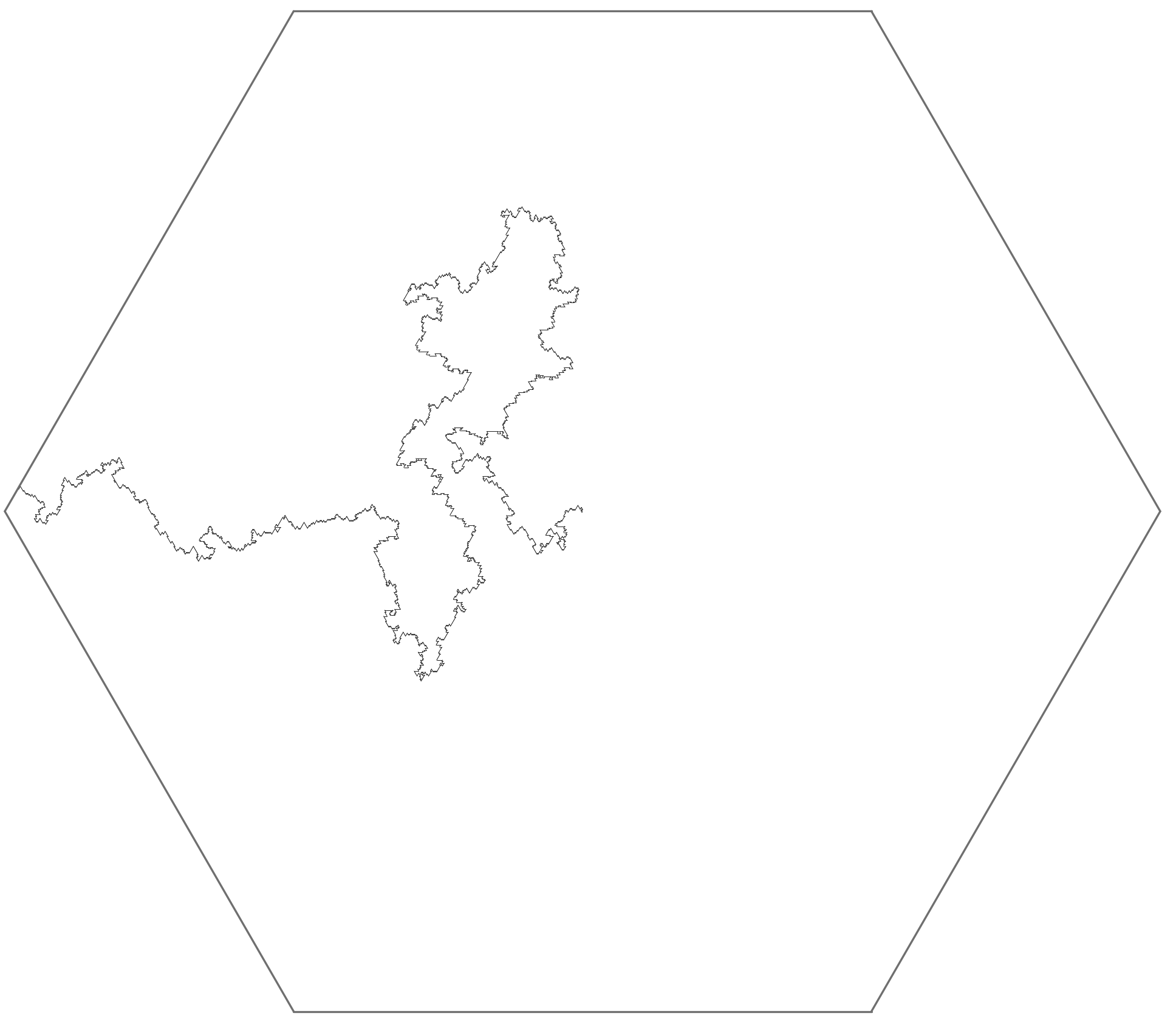}
    \includegraphics[width=0.4\textwidth]{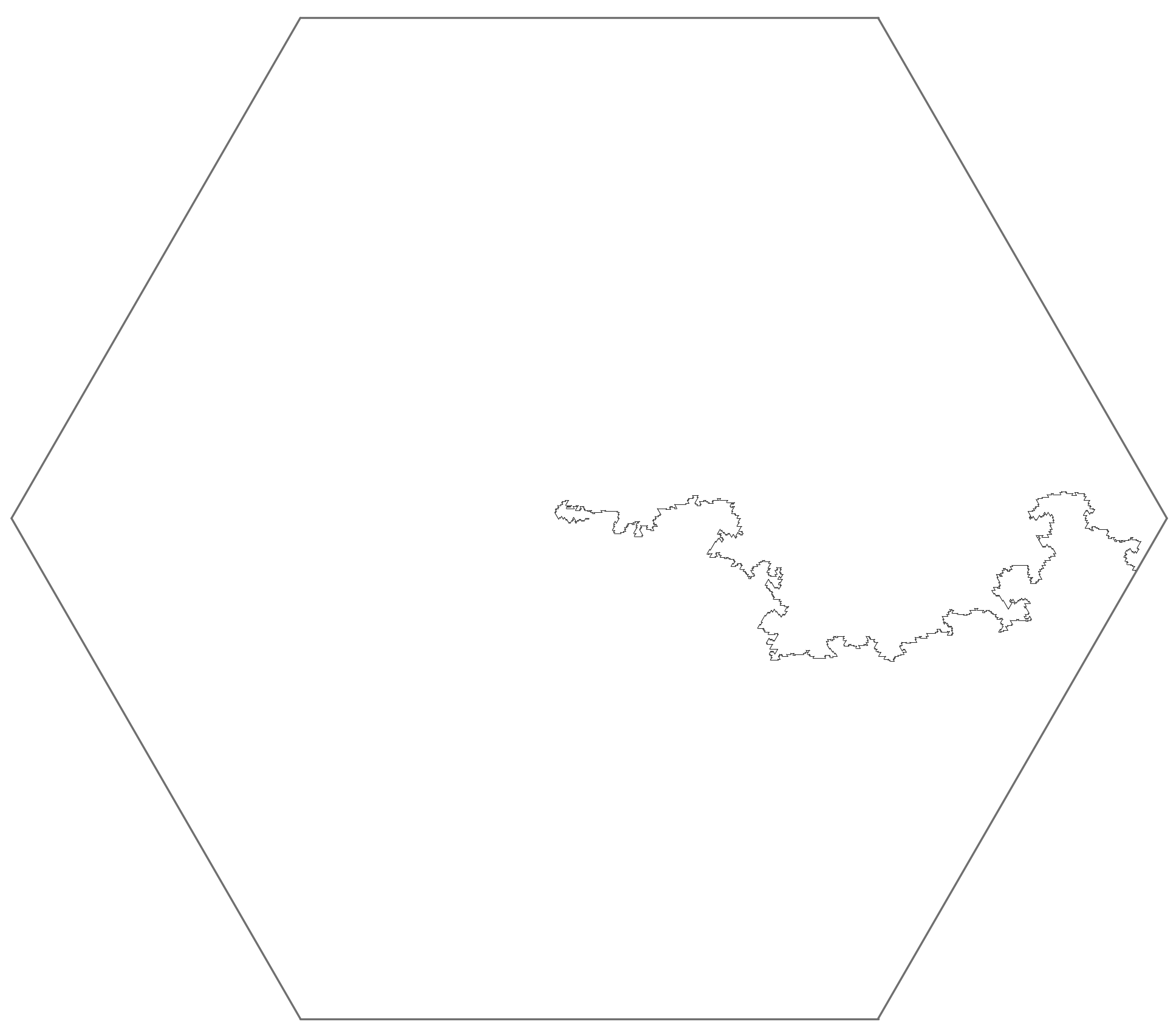}
    \caption{Two samples of loop-erased random walks on the triangular lattice in a hexagon of side-length 500.
    Left: no drift. Right: small drift to the right.}
    \label{fig:walks}
\end{figure}

\begin{thm}\label{T:LERW_constant} Suppose $\Omega$ is bounded and suppose $\alpha^\delta: \Omega \to \R^2$ is independent of $v$ and satisfies $\alpha^\delta = \delta \alpha + o(\delta)$ for some fixed $\alpha \in \R^2$.
Let $o \in \Omega$ and let $o^\delta$ denote a lattice point on $\Gtree$ which converges to $o$ as $\delta \to 0$. Let $(\gamma^\delta_0, \ldots, \gamma^\delta_T)$ denote the loop-erasure of a random walk sampled from $\P^{(\alpha^\delta)}_{o^\delta}$, starting from $o^\delta$ killed when leaving $\Gtree$, and identify $\gamma^\delta$ with its linear interpolation to get a continuous path on $[0,T]$. Then as $\delta \to 0$,
$$
\gamma^\delta \to \gamma^0,
$$
where $\gamma^0$ has the following law: first, its endpoint $a$ has the law $\mu^{(\alpha)}_o$ which is the hitting distribution of $\partial \Omega$ by a Brownian motion with drift $\alpha$ starting from $o$; furthermore, conditionally given $a$, $\gamma^0$ is a massive radial SLE$_2$ from $a$ to $o$ in $\Omega$ with mass $\|\alpha\|/\sqrt{2}$. Here the convergence is in the sense of uniform convergence up to reparametrisation.
\end{thm}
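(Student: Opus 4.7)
The plan is to reduce Theorem \ref{T:LERW_constant} to the convergence of massive LERW towards massive radial SLE$_2$. There are three ingredients: an invariance principle for the underlying drifted walk, which gives the marginal law of the endpoint; a discrete Girsanov identity converting the drifted walk conditioned on its exit point into a massive walk conditioned on survival and on the same exit point; and the already-known convergence of the massive LERW on the appropriate lattice to massive radial SLE$_2$.

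First I would establish the endpoint marginal. The random walk on $\Gtree$ with weights \eqref{eq:weights_square} or \eqref{eq:weights_hex} has increments of mean $\Delta\,\delta^2 + o(\delta^2)$ and covariance matrix proportional to $\delta^2 I$: on the square lattice the isotropy of the covariance uses the assumption \eqref{E:equalaxes}, whereas on the triangular lattice it is automatic because the three outgoing directions are rotations of one another by $\tau$. Donsker's theorem then gives convergence of the rescaled walk to Brownian motion with drift $\Delta$, so the exit distribution from $\Gtree$ converges weakly to $\sigma$, the exit distribution from $\Omega$ of Brownian motion with drift $\Delta$; by Skorohod representation I may assume $Y_\delta \to y$ almost surely along a coupling.

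Next I would apply the Girsanov identities of Section \ref{section:DriftMass}. Conditional on $Y_\delta = y_\delta$, the law of the drifted walk from $z_\delta$ to $y_\delta$ is exactly (triangular case, by Lemma \ref{L:girsanov_discrete_tri} and Corollary \ref{conditionedequality}) or asymptotically (square case, by Lemma \ref{L:discreteGsq} and Corollary \ref{P:massdrift_square}) equal to the law of a massive random walk of mass $m = |\Delta|$, conditioned on surviving and exiting at $y_\delta$. The underlying algebraic fact is that the Radon--Nikodym derivative between the drifted walk of length $n$ and the massive walk of mass $|\Delta|$ of the same length depends (up to asymptotically negligible corrections on the square lattice) only on the endpoint, so the conditioning freezes it. Since loop-erasure is a deterministic functional of the walk, the same identification transfers at once to the LERW.

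The proof is then completed by invoking convergence of the massive LERW to massive radial SLE$_2$ with mass $m = |\Delta|$ and target $y$ in $\Omega$: on the square lattice this is Theorem 1.1 of \cite{ChelkakWan} in its radial formulation (noted there as an easier variant of the chordal one), and on the directed triangular lattice it is the content of Section \ref{section:convergence} of the present paper. The main obstacle of the whole argument is precisely this last step on the triangular lattice: Chelkak and Wan's approach makes essential use of reversibility, while the weights \eqref{eq:weights_hex} give a genuinely directed chain (each vertex has three outgoing and three incoming edges, but the time-reverse is not the same process). Overcoming this requires a separate identification of the time-reversed walk together with an adaptation of the relevant martingale observable machinery, which is the heart of Section \ref{section:convergence}. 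Finally, upgrading from convergence in the Schramm topology to uniform convergence up to reparametrisation is standard given the a priori radius and winding estimates for the loop-erasure that are collected in Section \ref{S:height}.
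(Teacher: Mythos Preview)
Your proposal is correct and follows essentially the same route as the paper: discrete Girsanov to identify the drifted walk (conditioned on its exit point) with the massive walk, then invoke the convergence of massive LERW to massive radial SLE$_2$ (Chelkak--Wan on the square lattice, Section~\ref{section:convergence} here for the directed triangular lattice). Two small remarks: on the square lattice the paper does not literally prove that the conditioned laws are ``asymptotically equal'' but rather works with the unconditioned Radon--Nikodym derivative restricted to the good event $G_\delta$ of Remark~\ref{R:RNdeterministic}, which is what makes your vague ``asymptotically'' precise; and your final sentence about upgrading from the Schramm topology is unnecessary and slightly off, since the massive LERW convergence results are already stated for curves in the uniform-up-to-reparametrisation topology (the Schramm topology and the winding estimates of Section~\ref{S:height} are used later for the tree and the height function, not for the single branch in Theorem~\ref{T:LERW_constant}).
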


Note that Theorem \ref{T:constantdrift_tree} follows directly from Theorem \ref{T:LERW_constant} and Temperley's bijection (Theorem \ref{T:Tbij}). The rest of Section \ref{section:DriftMass} will be devoted to a proof of Theorem \ref{T:LERW_constant}. We will separate the case of the square and triangular lattices as the proofs are a little different in each case. We first outline the main ideas. Essentially, we are able to relate at the discrete level the loop-erasure of random walk on $\Gtree$ with that of a massive random walk. The relation is exact in the case of the triangular lattice and approximate in the case of the square lattice. On the square lattice, we know by the results of Makarov and Smirnov \cite{MakarovSmirnov} (as clarified by the more recent work of Chelkak and Wan \cite{ChelkakWan}) that the massive LERW converges to massive SLE$_2$. Combined with the above-mentioned approximate relation on the square lattice, this gives a proof of Theorem \ref{T:LERW_constant} in this case.
The theorem of Chelkak and Wan is however only stated for the square lattice and we will verify that their approach can be extended to cover the directed triangular lattice as well. The lack of reversibility is a difficulty in that case.

Let us now begin the proof of Theorem \ref{T:LERW_constant} for the triangular lattice, with a proof of the fact that  the loop-erased random walk has the same law as the loop-erasure of a massive walk, once we condition on the endpoint.


Fix $\alpha^\delta$ as in the theorem, and write $a_k =a_k^\delta = e^{\alpha_k^\delta}$ ($k = 0, \ldots, 2$), and $a = a_0 + a_1 + a_2$. Let $\beta(v)=\beta^\delta(v)$ be implicitly defined by \eqref{E:tribeta}, which as we will soon see is of order $\delta^2$, and clearly does not depend on $v$.
We will want to compare our walk
$\P^{(\alpha)}$ with an appropriate massive walk. Let $m = m^\delta >0$ be defined by
\begin{equation}\label{eq:m}
 \frac{1}{3} (1- \frac{m^2 \delta^2}{2}) =  \frac{\sqrt[3]{a_0 a_1  a_2 }}{a }.
 \end{equation}
(Note that $m$ is well defined by the arithmetic-geometric mean inequality.) 
The mass $m$ can also be related to the factor $\beta^2$ previously introduced in \eqref{E:tribeta}: that is,
$$
1-\frac{m^2\delta^2}{2} = \exp ( - \beta^2  /3).
$$


We now show that the mass $m = m^\delta$ is non degenerate in the limit, and in fact simply equals the norm of the drift vector $ \alpha$ (up to a factor 1/2).

\begin{lem}\label{L:massdrift_tri}
Let $\alpha^\delta$ be as in Theorem \ref{T:LERW_constant} and $\beta $ be as above. Then 
\begin{equation}\label{eq:beta_asymp}
\quad\beta^2(v)=\frac34\delta^2\|\alpha\|^2+o(\delta^2),
\end{equation}
Equivalently, if $m = m^\delta$ be as in \eqref{eq:m} then
$m^\delta$ converges as $\delta \to 0$ to $\frac{\|\alpha\|}{2}$.
\end{lem}
\begin{proof}
In fact we will directly prove the result on $m$. This will come from a careful second order expansion (note however that our assumption about $\alpha^\delta$ implies only $\alpha^\delta = \delta \alpha + o(\delta)$). For $k = 0,1,2,$ let us write $a_k = e^{\alpha_k^\delta}= 1+ c_k \delta$, and let $s = c_0 + c_1 + c_2$, so that with these notations $a = a_0+ a_1 + a_2 = 3 + s \delta$.
Then starting from the identity
$$
\frac{\sqrt[3]{(1+ c_0\delta)(1+ c_1 \delta) (1+ c_2\delta)}}{ 3 + s \delta} = \frac13( 1-\frac{m^2\delta^2}{2}),
$$
and expanding the product before doing a Taylor expansion of the left hand side as $\delta \to 0$, we find
$$
\frac{1 + \tfrac{s\delta}3 + (\tfrac{c_0 c_1 + c_1 c_2 + c_2 c_0}{3} - \tfrac{1}{9}s^2 )\delta^2 + o(\delta^2) }{3 + {s \delta} } = \frac13( 1-\frac{m^2\delta^2}{2})
$$
in other words, writing $\kappa = \tfrac{c_0 c_1 + c_1 c_2 + c_2 c_0}{3} - \tfrac{1}{9}s^2$,
$$
\frac13 + \frac{\kappa}{3} \delta^2 + o(\delta^2) = \frac13( 1-\frac{m^2\delta^2}{2})
$$
from which it follows that
$$
m^2 =- 2\kappa+ o(1).
$$
Let us call $p = c_0 c_1 + c_1 c_2 + c_2 c_0$, so that
\begin{align*}
\kappa &= \frac{p}{3} - \frac{s^2}{9}  = \frac{p}{3} - \frac{1}{9}(\sum_{k=0}^2 c_k^2 + 2p) = \frac{p}{9} - \frac{1}{9} \sum_{k=0}^2 c_k^2
\end{align*}
Now observe that since $\la1, \tau\ra  = \la \tau, \tau^2 \ra = \la 1, \tau^2\ra = -1/2$,
\begin{align*}
\|\alpha\|^2 & =  \frac{4}{9}\la c_0 + c_1 \tau + c_2 \tau^2, c_0 + c_1 \tau + c_2 \tau^2\ra \\
& =-\frac{4}{9}p + \frac{4}{9}\sum_{k=0}^2 c_k^2 = -\kappa.
\end{align*}
Therefore,
$$
m^2 =  \frac{1}{2}\|\alpha\|^2 + o(1),
$$
as desired.
\end{proof}

Let $\rho = \rho^\delta = m^2 \delta^2 /2$, and let $\P^{(\rho)} = \P^{(\rho^\delta)}$ denote the law of massive random walk, which has jump probabilities
$$
\P^{(\rho)}( v, v+ \delta \tau^k ) = \frac{1}{3}(1-\frac{m^2\delta^2}{2}) = \frac{\sqrt[3]{a_0 a_1 a_2}}{a}; \quad k = 0, \ldots, 2
$$
and which jumps to an additional ghost or cemetery vertex with probability $\rho^\delta = m^2 \delta^2/2$ (in which case say that the path has died). Let $\P^{(\rho)} ( \cdot | Y^\delta = a^\delta)$ denote the conditional law of massive random walk, given that the walk does not die before leaving $\Gtree$ and that the exit point is $a^\delta$.

From Theorem \ref{L:girsanov_discrete_tri} we get the following corollary:

\begin{cor}\label{conditionedequality}
For each $\delta>0$, for each $o^\delta \in \Gtree$ and  $a^\delta \in \partial \Gtree,$ we have
$$
\P^{(\alpha^\delta)}_{o^\delta} (\cdot | Y_\delta =a_\delta) = \P^{(\rho^\delta)}_{o^\delta}(\cdot | Y_\delta = a_\delta).
$$
\end{cor}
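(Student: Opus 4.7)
The plan is to combine the Girsanov identity of Lemma \ref{L:girsanov_discrete_tri} with a direct calculation of the Radon--Nikodym derivative of the (killed) massive walk with respect to simple random walk, and then exploit the telescoping nature of the exponent $M_n$ when $\alpha$ does not depend on the vertex.

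First, I would compute $d\P^{(m)}/d\P^{(0)}$ on paths that have not been killed. Since the massive walk at each step jumps in each of the three directions with probability $(1-m^2\delta^2)/3$ and dies with probability $m^2\delta^2$, the probability of taking a specific path $\gamma^\delta$ of length $n$ while surviving all $n$ steps equals $((1-m^2\delta^2)/3)^n$. Dividing by the simple random walk probability $3^{-n}$ gives
\begin{equation*}
\frac{\P^{(m)}(\gamma^\delta,\,\text{survives})}{\P^{(0)}(\gamma^\delta)} = (1-m^2\delta^2)^n = \exp\!\bigl(-\tfrac{n\delta^2\beta^2}{3}\bigr),
\end{equation*}
where in the last step I use that $\beta$ is constant (the weights do not depend on $v$) and the identity $1-m^2\delta^2 = \exp(-\beta^2\delta^2/3)$ connecting the definition of $m$ with the $\beta$ appearing in Lemma \ref{L:girsanov_discrete_tri}. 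When $\beta$ is constant, $V_n = \tfrac{2}{3}n\delta^2\beta^2$, so the right-hand side is exactly $\exp(-\tfrac{1}{2}V_n)$.

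Combining with Lemma \ref{L:girsanov_discrete_tri} yields
\begin{equation*}
\P^{(\Delta)}(\gamma^\delta) = \exp(M_n)\cdot \P^{(m)}(\gamma^\delta,\,\text{survives}).
\end{equation*}
The key observation is now that when $\alpha$ is constant (as it is here, since the weights $a_1,a_2,a_3$ do not depend on $v$), the martingale-type term $M_n$ telescopes:
\begin{equation*}
M_n = \tfrac{2}{3}\sum_{s=0}^{n-1} \langle \alpha, dx_s\rangle = \tfrac{2}{3}\langle \alpha, x_n - x_0\rangle.
\end{equation*}
On the event $\{Y_\delta = y_\delta\}$ this equals the constant $\tfrac{2}{3}\langle \alpha, y_\delta - z_\delta\rangle$, which depends only on the endpoints and not on the particular path.

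Finally, to conclude I would divide through by the total mass of each measure on $\{Y_\delta = y_\delta\}$: summing the identity $\P^{(\Delta)}(\gamma^\delta) = e^{M_n}\,\P^{(m)}(\gamma^\delta,\,\text{survives})$ over all paths ending at $y_\delta$ gives
\begin{equation*}
\P^{(\Delta)}(Y_\delta = y_\delta) = \exp\!\bigl(\tfrac{2}{3}\langle \alpha, y_\delta - z_\delta\rangle\bigr)\,\P^{(m)}(\text{survives and }Y_\delta = y_\delta),
\end{equation*}
so the constant cancels upon normalisation and the two conditional laws coincide. There is no real obstacle in this argument; the only thing to check carefully is the identification of $(1-m^2\delta^2)^n$ with $\exp(-V_n/2)$ under constant weights, which is a direct consequence of the definitions of $m$ and $\beta$ via the arithmetic--geometric mean.
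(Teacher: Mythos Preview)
Your proof is correct and follows essentially the same approach as the paper: you identify $\exp(-\tfrac12 V_n)$ with $(1-m^2\delta^2)^n$ via the relation $1-m^2\delta^2=\exp(-\beta^2\delta^2/3)$, observe that $M_n$ telescopes to $\tfrac23\langle\alpha,y_\delta-z_\delta\rangle$ when $\alpha$ is constant, and conclude that the Radon--Nikodym derivative depends only on the endpoints so cancels upon conditioning. The paper's proof is the same argument, just written slightly more tersely.
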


\begin{proof}
Since $\alpha^\delta$ does not depend on $v$, the discrete stochastic integral can be written as
\begin{equation}\label{eq:martingalepart}
M_n = \sum_{s=0}^{n-1} \langle \delta^{-1}\alpha^\delta (x_s) ,\dd x_s \rangle =  \langle \delta^{-1}\alpha^\delta, a_\delta - o_\delta\rangle
\end{equation}
and so does not depend on the path $\gamma^\delta$ subject to the condition $Y^\delta = a^\delta$. Furthermore the mass $m$ has been chosen so that the quadratic variation part cancels the mass term exactly: that is,
$$
\exp( - \tfrac12 V_n ) = (1-\frac{m^2\delta^2}{2})^n.
$$
Hence the ratio of the left hand side to the right hand side is a constant, independent of the path $\gamma^\delta$, therefore this constant is one since both probability measures sum up to one when we sum over all paths.
\end{proof}

\begin{cor}\label{derivative}
Let $ \alpha = \alpha^\delta$ be as in Theorem \ref{T:LERW_constant} and consider the mass $m$ as in
\eqref{eq:m} and $\rho = \rho^\delta = m^2 \delta^2/2$.
Suppose $o^\delta \to o \in \Omega, a^\delta \to a
\in \partial \Omega$,
$$
    \frac{\mathbb{P}^{(\alpha)}_{o^\delta}}{\mathbb{P}^{(\rho)}_{o^\delta}}(\gamma^\delta)  \rightarrow\exp(\left\la \alpha, a-o\right\ra).
$$
\end{cor}
\begin{proof}
This follows from our exact expression for $  ({\mathbb{P}^{(\alpha)}_{o^\delta}} / {\mathbb{P}^{(0)}_{o^\delta}})(\gamma^\delta)  $, \eqref{eq:martingalepart}, the already observed fact that the quadratic variation part cancels exactly with the mass, and the fact that $\delta^{-1}\alpha^\delta$ converges to $\alpha$.
\end{proof}

Note that this is stated without conditioning the massive walk to hit the boundary before dying.
(This conditioning would simply add a term to the Radon-Nikodym derivative of the previous lemma, corresponding to the probability to hit the boundary before dying.)

From Corollary \ref{derivative}, in particular we see that when $\gamma^\delta$ is a loop then $\P^{(\alpha)} (\gamma^\delta) = \P^{(\rho)} (\gamma^\delta)$. Although we do not need this here, this implies that the loop measures associated with the drifted walk $\P^{(\alpha)}$ and $\P^{(\rho)}$ are identical. Since these loop measures can be used to identify the law of loop-erased random walk (see, e.g., \cite[Chapter 9.5]{LawlerLimic}), we can use this observation to deduce that the expression obtained in Lemma \ref{derivative} can be transferred at the level of the loop-erasure. In fact this can be proved directly as follows. If $X$ is a lattice path, let $\LE (X)$ denote the chronological loop-erasure of $X$ considered up until its hitting time of $\partial \Gtree$ (if the path never reaches $\partial \Gtree$ -- for instance if it dies before reaching the boundary -- then $\LE(X)$ is by convention the empty path).

\begin{lem}\label{L:derivativeLE} Let $\gamma^\delta$ denote a fixed (sequence of) \textbf{simple} lattice paths from $o^\delta \in \Gtree$ to $a^\delta \in \partial \Gtree$, with $o^\delta \to o
\in \Omega$, $a^\delta \to a \in \partial \Omega$. Then
$$
\frac{\P^{(\alpha)} (\LE(X) = \gamma^\delta)}{\P^{(\rho)} (\LE(X) = \gamma^\delta)} \to \exp(\left\la \alpha, a-o\right\ra).
$$
as $\delta \to 0$.
\end{lem}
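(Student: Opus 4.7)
The plan is to lift the path-level Radon--Nikodym identity of Corollary \ref{derivative} to the level of loop-erasures. The key structural observation I will use is that when $\Delta$ is constant (the setting of Corollary \ref{conditionedequality}) both $\alpha$ and $\beta$ are independent of the vertex $v$, so the quantity $M_{n'} - V_{n'}/2$ from Lemma \ref{L:girsanov_discrete_tri} depends on a path $X$ only through its length $n'$ and its endpoints. This rigidity makes it essentially automatic to sum over all paths with a prescribed loop-erasure.

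First, I would unpack the event $\{\LE(X) = \gamma^\delta\}$. Because $\gamma^\delta$ is simple and terminates at $y^\delta \in \partial \Gtree$, any realisation $X$ in this event must reach $y^\delta$ at its first hit of $\partial \Gtree$, in particular (under $\P^{(m)}$) without having been killed. So $\sum_{X:\LE(X) = \gamma^\delta}$ runs over finite lattice paths from $x^\delta$ to $y^\delta$ staying in $\Gtree$ until their final step. Fixing such an $X$ of length $n'$, combine Lemma \ref{L:girsanov_discrete_tri} with $\P^{(m)}(X) = ((1-m^2\delta^2)/3)^{n'}$ and the defining relation $(1-m^2\delta^2) = \exp(-\delta^2\beta^2/3)$: the quadratic-variation factor $\exp(-V_{n'}/2) = (1-m^2\delta^2)^{n'}$ cancels exactly against the massive normalisation, leaving
$$
\frac{\P^{(\Delta)}(X[0,n'] = X)}{\P^{(m)}(X[0,n'] = X)} = \exp(M_{n'}).
$$
Since $\alpha$ is constant, the sum telescopes: $M_{n'} = \tfrac{2}{3}\la \alpha, \sum_s \dd x_s\ra = \tfrac{2}{3}\la \alpha, y^\delta - x^\delta\ra$, independent of $n'$ and of the route. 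Summing over $X$ in the event $\{\LE(X) = \gamma^\delta\}$ therefore yields
$$
\frac{\P^{(\Delta)}(\LE(X) = \gamma^\delta)}{\P^{(m)}(\LE(X) = \gamma^\delta)} = \exp\!\bigl(\tfrac{2}{3}\la \alpha, y^\delta - x^\delta\ra\bigr).
$$

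It remains to pass to the limit. Since $\alpha_k = \delta^{-1}\log(1+c_k\delta) \to c_k$, we have $\alpha \to c_1 + c_2\tau + c_3\tau^2 = 3\Delta$, and $y^\delta - x^\delta \to y-x$, so the right-hand side converges to $\exp(2\la y-x, \Delta\ra)$. I do not expect any real obstacle: the lemma is essentially a cleaning-up of Corollary \ref{derivative}, and the only step that requires minor care is checking that the path-level identity holds uniformly in $X$ (which is automatic from the constancy of $\alpha$ and $\beta$) and that the event $\{\LE(X) = \gamma^\delta\}$ imposes the right endpoint constraint even in the massive case (which it does, since a killed trajectory produces an empty loop-erasure by convention).
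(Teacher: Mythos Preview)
Your proof is correct and follows essentially the same approach as the paper: both sum over all random-walk paths $X$ with $\LE(X)=\gamma^\delta$ and use that the path-level Radon--Nikodym derivative depends only on the endpoints (and length, but the length-dependent factor cancels exactly with the massive weight). The paper's proof is a one-liner citing Corollary~\ref{derivative}, whereas you unpack the same computation from Lemma~\ref{L:girsanov_discrete_tri} directly, but the content is identical.
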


\begin{proof}
This follows directly from Corollary \ref{derivative} by summing over all ways to obtain $\gamma^\delta$ as a loop-erasure, and noting that the expression for the Radon-Nikodym derivative in Corollary \ref{derivative} depends only on the endpoints of the path, and not the rest of the path itself.
\end{proof}

As was mentioned in the introduction, the scaling limit of massive LERW is rather well understood, at least on the square lattice. Although the existing proofs of convergence to massive SLE$_2$ do not cover the case of the triangular lattice, it is possible with a bit of effort to extend these methods to cover this case (the main issue is the lack of reversibility which is needed to establish the crucial ``resolvent identity'' at the discrete level). We state the result here, but defer its proof until later, and see how this can be used to deduce Theorem \ref{T:LERW_constant}.

\begin{thm}
\label{T:massiveLERWtriangular}
Let $\Gtree \subset\delta\mathbb{T}$ approximate $\Omega$ with $ o^\delta\in\Omega^\delta\to o\in\Omega$, and let $a^\delta$ be a boundary point of $\Gtree$ such that $a^\delta \to a\in\partial\Omega$.
Consider the loop-erasure of a random walk sampled from $\P^{(\rho^\delta)}_{o^\delta}$, started at $o^\delta$ and conditioned to hit the boundary at $a^\delta$ before dying, with mass $\rho^\delta = m^2 \delta^2/2$, where $m = m^\delta \to m$, converges in law to radial massive SLE$_2$ from $a$ to $o$ with mass $m$. 
\end{thm}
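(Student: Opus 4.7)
The plan is to adapt the strategy of Chelkak and Wan \cite{ChelkakWan} from the square lattice to the directed triangular lattice. That strategy rests on two pillars: (i) an explicit discrete martingale observable built from massive Poisson kernels in the slit domain, whose continuum limit is a holomorphic function encoding the Loewner driving function of massive radial SLE$_2$; and (ii) a Kemppainen--Smirnov type annulus-crossing estimate providing precompactness of the curves in the uniform-up-to-reparametrisation topology. Given (i) and (ii), the conclusion is standard: identify the driving function as a semimartingale whose drift and quadratic variation match those prescribed in Section \ref{SS:massiveSLE}, and pass to the limit.

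The discrete observable I would use is (essentially) the ratio of massive to massless Poisson kernels in the slit domain $\Gtree\setminus \gamma^\delta[0,t]$, evaluated near the tip $a_t^\delta$ of the loop-erasure and normalised so as to be a martingale for the massive walk conditioned to exit through $y_\delta$ without dying. On a directed graph this requires care: the Doob $h$-transform implementing the conditioning produces Poisson kernels on the \emph{reversed} triangular lattice, so one has to work simultaneously with massive walks on $\delta\mathbb{T}$ and on its time-reversal. A Wilson-type identity, writing the LERW as the time-reversal of an $h$-transformed walk on the reversed lattice, nonetheless delivers the required martingale property.

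Locally uniform convergence of this discrete observable to its continuum counterpart would then be established by decomposing the directed generator on $\delta\mathbb{T}$ into its symmetric and antisymmetric parts: the symmetric part is the ordinary triangular-lattice Laplacian plus the killing, whose scaling limit is the standard massive Laplacian, while the antisymmetric part is $O(\delta)$ and can be absorbed into a Radon--Nikodym factor via the discrete Girsanov identity of Lemma \ref{L:girsanov_discrete_tri}. Standard regularity (discrete Harnack, Beurling estimates) then propagates convergence from the Green's function to the Poisson kernel and to the observable, and the Kemppainen--Smirnov crossing estimate extends in the same way, since the Radon--Nikodym factor is uniformly bounded on paths of bounded length. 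The principal obstacle, and what makes this more than a mechanical exercise, is precisely the absence of reversibility: Chelkak and Wan's use of s-holomorphicity and self-adjointness of the massive Laplacian has to be recast in terms of the symmetrised operator plus an order-$\delta$ perturbation, and each step of their argument must be re-examined under this decomposition. Once this bookkeeping is carried out, identification of the limit as radial massive SLE$_2$ with mass $m$ from \eqref{eq:m} is immediate from the Loewner SDE recalled in Section \ref{SS:massiveSLE}.
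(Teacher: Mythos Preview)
Your high-level architecture is correct and matches the paper: tightness via annulus crossing (and absolute continuity with classical LERW), a martingale observable built from massive Poisson kernels in the slit domain, and identification of the limiting driving function via It\^o calculus. Where you diverge from the paper, and where I see a genuine gap, is in the mechanism you propose for handling irreversibility.

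First a minor point: Chelkak and Wan do \emph{not} use s-holomorphicity. Their argument is entirely in terms of massive Green's functions and Poisson kernels, together with the crucial resolvent-type identity relating the massive and massless partition functions (equation \eqref{eq:mystery} here). So the sentence about recasting ``s-holomorphicity and self-adjointness'' mischaracterises what needs to be transferred.

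More substantively, your plan to ``absorb the antisymmetric part of the generator into a Radon--Nikodym factor via Lemma \ref{L:girsanov_discrete_tri}'' does not do what you need. That lemma compares the \emph{drifted} walk on $\delta\mathbb{T}$ to the \emph{undrifted} walk on the \emph{same} directed lattice; it says nothing about the relation between the walk on $\delta\mathbb{T}$ and the walk on its time reversal. The massive walk you are analysing already has equal outgoing weights, so there is no drift to remove; the irreversibility lives in the graph structure, not in the weights, and Girsanov will not symmetrise it. It is true that the first-order part of $L-L^*$ vanishes (because $1+\tau+\tau^2=0$), but you would still need to control the accumulated effect of the remaining terms over paths of length $\delta^{-2}$, and you have not indicated how to do this, nor how it feeds into the Poisson-kernel convergence.

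The paper bypasses all of this. Rather than perturbing around a reversible model, it proves convergence of the discrete massive Green function $Z^{(m)}_{\Gtree}(x^\delta,y^\delta)$ \emph{directly}, by a hands-on analysis that nowhere uses symmetry: split $\sum_n \P^{(0)}_{x^\delta}(X_n=y^\delta)(1-m^2\delta^2)^n \P^{(0)}_{x^\delta}(\tau_{\partial\Omega}>n\mid X_n=y^\delta)$ into short, intermediate, and long times; control the short range by a multinomial comparison (Lemma \ref{L:comparison}), the long range by an exponential exit bound (Lemma \ref{L:expexit}), and the intermediate range by Stirling plus convergence of the conditioned walk to a Brownian bridge (Lemma \ref{bridge convergence}). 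Precompactness of subsequential limits comes from a massive Harnack inequality (Lemma \ref{lem:Harnack}), and uniqueness from the characterisation of the massive Green function (Lemma \ref{lem:Green uniqueness}). The Poisson-kernel convergence then follows from the identity \eqref{eq:mystery}, which is valid on directed graphs, and the Yadin--Yehudayoff convergence of the non-massive kernel. None of these steps needs $Z^{(m)}_{\Gtree}(u,v)=Z^{(m)}_{\Gtree}(v,u)$; the symmetry is only recovered in the limit.

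In short: keep your outline, drop the Girsanov-for-antisymmetric-part idea, and replace it with a direct proof of Green function convergence along the lines of Proposition \ref{prop: green convergence}.
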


Since the exit distribution of massive LERW from $\Gtree$, conditional on exiting this domain before dying, has a limit as $\delta \to 0$ (the ``massive harmonic measure'' on $\partial \Omega$), and since the law of radial massive SLE$_2$ from $a \in \partial \Omega$ to $o \in \Omega$ is continuous with respect to $a$, we deduce from this theorem that the scaling limit holds even if we do not condition on the exit point $a^\delta$ of the random walk, and simply condition on not dying before reaching the boundary.

\medskip The proof of Theorem \ref{T:massiveLERWtriangular} is deferred to Section \ref{section:convergence}. For now, we see how this immediately implies Theorem \ref{T:LERW_constant} for the triangular lattice.

\begin{proof}[Proof of Theorem \ref{T:LERW_constant}] This will follow rather simply from Lemma \ref{L:derivativeLE}, the fact that the expression for the Radon-Nikodym derivative is well-behaved, and the fact that the extinction probability for the massive walk converges to some nontrivial probability bounded away from zero and one as $\delta \to 0$. Indeed, since $\Omega$ is bounded, the function $\exp ( \la  \alpha, a - o \ra)$, viewed as a function of the endpoint $a \in \partial \Omega$, is a bounded continuous functional on path space. Therefore, if $F$ is another arbitrary such functional, then letting $\sigma^\delta$ be the hitting time of $\partial \Gtree$ by $X^\delta$,
\begin{align*}
\E_{o^\delta}^{(\alpha^\delta)} [ F ( \LE (X^\delta))] & = \E^{(\rho^\delta)}_{o^\delta} \left[ \left. F ( \LE(X^\delta)) \exp ( \la  \alpha + o(1), Y_\delta - o^\delta \ra) \right| \sigma^\delta < \infty\right] \P_{o^\delta}^{(\rho^\delta)} ( \sigma^\delta < \infty)\\
& \to \E^{\text{mSLE$_2$}}_o \left[ F(\gamma) \exp (\la  \alpha, Y - o \ra)\right] p(o),
\end{align*}
where $\E^{\text{mSLE$_2$}}_o$ denote the law of a massive radial SLE$_2$ started from massive harmonic measure on $\partial \Omega$ (a point which we denote by $Y$), towards $o$, and $p(o)$ is the survival probability for massive Brownian motion in $\Omega$ starting from $o$, killed at rate $\rho = m^2 = \|\alpha\|^2 /2$, i.e., $p(o) =  \P_{o}^{(\rho)} 
 ( \sigma< \infty) = \E^{\text{BM}}_o ( \exp ( - \|\alpha\|^2 \sigma/2))$ with $\sigma$ the exit time from $\Omega$. The rest of the result follows immediately by specifying $F$ to be a continuous function of the endpoint $Y_\delta$.
 \end{proof}

\subsection{Discrete Girsanov on the square lattice}\label{S:square}

Now let us consider the case of the square lattice, so $\Gtree$ is a portion of $\delta\Z ^2$ which approximates $\Omega$ in the sense discussed above. Our first task is to define precisely what we mean by $\P^{(\alpha^\delta)}$. 
%
%
Let $c_k\colon\Gtree\to\R$ be bounded functions for $k=0,\dots,3$, satisfying $c_0+c_2=c_1+c_3$ at each vertex $v \in \Gtree$.
Then $\P^{(\alpha^\delta)}$ is the law of the Markov chain on $\Gtree$ whose jump probabilities from the vertex $v\in \Gtree$ are given by
\begin{equation}\label{eq:jumpsquare}
\P^{(\alpha^\delta)}(v,v+\delta {\mathbf i}^{k})=\frac{a_k(v)}{a(v)} \quad k = 0, \ldots, 3\,,
\end{equation}
where
$$a(v)=\sum_{k=0}^3 a_k(v)\text{ and } a_k(v)=1+ c_k(v)\delta\text{, for }k=0,\dots,3, v \in \Gtree.$$
And define $\alpha = \alpha^\delta$ via
$$\alpha^\delta(v)=\frac12\sum_{k=0}^3a_k(v){\mathbf{i}}^{k} = \frac{\delta}2\sum_{k=0}^3c_k(v){\mathbf{i}}^{k}  \in \C \simeq \R^2.$$
Again $\alpha^\delta$ does not determine the $c_k$ uniquely, but only up to global shift, which does not influence the limit of the law, so that our notation $\P^{(\alpha^\delta)}$ is justified.

Together these assumptions guarantee that, if $c_k(v)$ is given by some fixed Lipschitz function $c_k:\Omega \to \R$ evaluated at $v \in \Gtree \subset \Omega$, then $\alpha^\delta = \delta \alpha + o(\delta),$ where $\alpha = (1/2) \sum_{k=0}^3 c_k \mathbf{i}^k$. (This is in particular the situation of interest for Theorem \ref{T:convergenceTree}, where $c_k$ are in fact constant). Thus $\alpha$ is itself a Lipschitz vector field defined on all of $\Omega$; this random walk converges to a Brownian motion with drift $\alpha$ under the same scaling as discussed in Section \ref{sec:notation}.

Again fix $\gamma^\delta=(x_0,\ldots,x_n)$ a given path, this time on the square lattice, starting from some point $x_0=o^\delta\in \Gtree$ of some length $n=N(\gamma^\delta)$.
Define $\alpha^\delta_k(v)\in\R, k=0,\ldots, 3$ by
\begin{equation}\label{E:sqalpha}
\exp(\alpha_k)= a_k = 1+ c_k \delta.
\end{equation}
Define also (for $i = 1,2$), $\beta_i = \beta^\delta_i(v)\ge 0$ by
$$
\exp(-\beta_i^2)=\frac{a_{i-1}a_{i+1}}{(a/4)^2}.
$$
Note that $\beta_1$ is well defined by the arithmetic-geometric mean inequality since for $i=1,2$,
$$\frac{a}{4}=1+\frac14\sum_{k=0}^3c_k\delta=1+\tfrac12(c_{i-1}\delta+c_{i+1}\delta)=\frac{a_{i-1}+a_{i+1}}{2},$$
where we used the assumption that $c_0+c_2=c_1+c_3$. We will then denote by $\beta \in \R^2$ the vector $\beta = ( \beta_1, \beta_2)$.

The next lemma gives the Girsanov identity in the case of the square lattice, which for conciseness we only give on the scaled lattice. 

\begin{lem}\label{L:discreteGsq}
On the square lattice, we have
\begin{equation}\label{E:discreteGfull}
\frac{\mathbb{P}_z^{( \alpha^\delta)}(\gamma^\delta)}{\P_z^{(0)} ( \gamma^\delta) }
= \exp (M_n - \tfrac{1}{2} V_n),
\end{equation}
where $M_n$ and $V_n$ can be written as
$$
M_n= \sum_{s=0}^{n -1} \delta^{-1} \langle \alpha (x_s); \dd x_s\rangle
\text{ and }
V_n  =  \sum_{s=0}^{n-1}    \delta^{-2}\| \beta(x_s) \odot \dd x_s \|^2
$$
where $a \odot b$ is the Hadamard product of the vectors $a= (a_1, a_2)$ and $b=(b_1, b_2)$, whose coordinate are $a_i b_i$ ($i=1, 2$). Explicitly, 
$V_n = \delta^{-2}\sum_{s=0}^{n-1}\beta_1(x_s)^2|\dd x_s^1|^2+\beta_2(x_s)^2|\dd x^2_s|^2.$
\end{lem}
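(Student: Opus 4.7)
The plan is to mirror verbatim the proof of Lemma~\ref{L:girsanov_discrete_tri}, adapted to the four directions of the square lattice. First, for each vertex $v$ visited by $\gamma^\delta$, introduce the numbers $n_k(v)$ of steps taken out of $v$ in direction $i^{k-1}$ ($k=1,\ldots,4$) and $n(v) = \sum_k n_k(v)$. A direct computation of the probability of the path, together with the fact that the simple random walk takes each direction with probability $1/4$, gives
$$
\frac{\P^{(\Delta)}_z(\gamma^\delta)}{\P^{(0)}_z(\gamma^\delta)} \;=\; \prod_v (a(v)/4)^{-n(v)} \prod_{k=1}^{4} a_k(v)^{n_k(v)},
$$
and the task is to rewrite this as $\exp(M_n - \tfrac12 V_n)$.

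The main algebraic move is to pair opposite directions. For each $k \in \{1,2\}$, write
$$
a_k^{n_k} a_{k+2}^{n_{k+2}} = (a_k a_{k+2})^{(n_k+n_{k+2})/2} \cdot (a_k/a_{k+2})^{(n_k-n_{k+2})/2}.
$$
The hypothesis $c_1+c_3=c_2+c_4$ enters precisely to guarantee the identity $a/4 = (a_1+a_3)/2 = (a_2+a_4)/2$, so that $a_k a_{k+2} = (a/4)^2 \exp(-\delta^2 \beta_k^2)$ by the very definition of $\beta_k$. Hence the symmetric factors combine to $(a/4)^{n(v)} \exp(-\tfrac{\delta^2}{2}(\beta_1^2(v) h_v + \beta_2^2(v) w_v))$, where $h_v = n_1(v)+n_3(v)$ and $w_v = n_2(v)+n_4(v)$. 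The factor $(a/4)^{n(v)}$ then cancels the normalisation $(a/4)^{-n(v)}$, and the remaining piece becomes $-\tfrac12 V_n$ once one observes that $\delta^2 h_v = \sum_{s:x_s = v}|\dd x_s^1|^2$ (each horizontal step from $v$ contributes $\delta^2$ to $|\dd x^1|^2$ and $0$ to $|\dd x^2|^2$), and analogously $\delta^2 w_v = \sum_{s:x_s = v}|\dd x_s^2|^2$.

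The antisymmetric factors, using $a_k = \exp(\delta \alpha_k)$, contribute the exponent
$$
\tfrac{\delta}{2}\sum_v \bigl[(\alpha_1-\alpha_3)(n_1-n_3) + (\alpha_2-\alpha_4)(n_2-n_4)\bigr](v).
$$
Identifying the complex number $\alpha = \alpha_1 + \alpha_2 i + \alpha_3 i^2 + \alpha_4 i^3$ with its planar coordinates $(\alpha_1-\alpha_3, \alpha_2-\alpha_4)$, and noting that $\delta(n_k(v)-n_{k+2}(v))$ is the sum of the $k$-th coordinate of $\dd x_s$ over all steps $s$ starting at $v$, this expression equals $\tfrac12 \sum_s \la \alpha(x_s), \dd x_s\ra = M_n$. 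No real obstacle appears: the identity is exact rather than asymptotic. The only subtlety worth highlighting is that the assumption $c_1+c_3 = c_2+c_4$ is used essentially; without it, $(a_1+a_3)/2$ and $(a_2+a_4)/2$ would disagree, the two axes could not be expressed simultaneously through the common normalisation $a/4$, and the quadratic variation could not be cleanly split into horizontal and vertical contributions in the Hadamard form stated.
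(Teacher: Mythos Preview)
Your proof is correct and follows essentially the same approach as the paper: both arguments pair opposite directions, split each product $a_k^{n_k}a_{k+2}^{n_{k+2}}$ into its symmetric and antisymmetric parts, and then identify these (via the definitions of $\alpha_k$ and $\beta_k$) with the $M_n$ and $V_n$ terms respectively. The paper's version carries out the computation on $\P_z^{(\Delta)}(\gamma^\delta)$ directly and divides by $4^{-n}$ at the end, whereas you work with the ratio from the start, but the algebra is identical.
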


\begin{proof}
Denote for a given path $\gamma^\delta$ of length $n$ whose starting point is $z$, by $n_0(v), \ldots, n_3(v)$ the number of steps of the walk from $v$ and going in the direct $1, {\mathbf i} , -1, -{\mathbf i}$ respectively.
 \begin{align}
      \mathbb{P}_z^{(\alpha)}(\gamma^\delta)
			&= \prod_{v \in v(\Gtree)} a_0^{n_0(v)} a_1^{n_1(v)} a_2^{n_2(v)} a_3^{n_3(v)} a^{-n} \nonumber \\
			&=4^{- n}\prod_{v\in v(\Gtree)}
			\left(\frac{a_0}{a_2}\right)^{\frac{n_0-n_2}{2}}
			\left(\frac{a_1}{a_3}\right)^{\frac{n_1-n_3}{2}}
			\left(\frac{a_0a_2}{(a/4)^2}\right)^{\frac{n_0+n_2}{2}}
			\left(\frac{a_1a_3}{(a/4)^2}\right)^{\frac{n_1+n_3}{2}} \nonumber \\
			&=4^{- n}\exp\left(\sum_{v \in v(\Gtree)}
			(\alpha_0- \alpha_2)\frac{n_0-n_2}{2}+
			(\alpha_1 - \alpha_3)\frac{n_1-n_3}{2} - 
			\beta_1^2\frac{n_0+n_2}{2}-
			\beta_2^2\frac{n_1+n_3}{2}
			\right) \nonumber \\
			&=4^{- n}\exp(M_n-\tfrac12V_n), \label{E:squareDensityGirsanov}
\end{align}
where in the last step we used that for each step of the walk in direction $1,i,-1$ or $-i$ the left two summands contribute $1/2$ times $\alpha_0-\alpha_2, \alpha_1 - \alpha_3, \alpha_2 -\alpha_0$ or $\alpha_3 - \alpha_1$ respectively, whereas the right two summands contribute $1/2$  times $\beta_1^2$ or $\beta_2^2$ 
depending on whether the displacement is horizontal or vertical. This leads to the expressions for $M_n$ and $V_n$ respectively.
The negative powers of $\delta$ in the expressions of $M_n$ and $V_n$ compensate the length of $\dd x_s$.
\end{proof}

We can again compare $\P^{(\alpha)}$ with an appropriate massive random walk.
Let $m = m^\delta = m^\delta(v)>0$ be defined by:
$$\tfrac14 (1-\frac{m^2\delta^2}{2})=\frac{\sqrt[4]{a_0a_1a_2a_3}}{a}$$
Note that this choice of $m$ also satisfies:
$$(1-\frac{m^2\delta^2}{2})=\exp(-\frac{\beta_1^2+\beta_2^2}{4}).
$$
We will see below that in the situation of interest for Theorem \ref{T:LERW_constant} (and so in particular for Theorem \ref{T:convergenceTree}) $\beta_i^2$ will indeed be of order $\delta^2$. 

Let $\rho = \rho^\delta(v) = m^2 \delta^2/2$ and let $\mathbb P^{(\rho)}$ be the law of the massive random walk, which has jump probabilities
$$\P^{(\rho)}(v,v+\delta {\mathbf i}^{k})=\tfrac14(1-\frac{m^2\delta^2}{2}),\text{ for }k=0\dots,3$$
and which jumps to an additional ghost or cemetery vertex with probability $\frac{m^2\delta^2}{2}$ (in which case we say that the path has died).
Let $\P^{(\rho)}(\cdot|Y_\delta = a^\delta))$ denote the conditional law of the massive random walk, given that the walk does not die before leaving $\Gtree$ and that the exit point is $a^\delta$.

\begin{lem} \label{L:beta_asymp_square}
    Suppose $\alpha^\delta$ does not depend on $v$ and $\alpha^\delta = \delta \alpha + o(\delta)$,  or equivalently $c_k $ converges as $\delta \to 0$ for $k = 0, \ldots, 3.$ Then for $i = 1,2$
    $$
    \beta_i^2 = \tfrac14 (c_{i-1} - c_{i+1})^2 \delta^2 + o(\delta^2).
    $$
    In particular $\delta^{-2} \| \beta\|^2 $ converges as $\delta \to 0.$ Furthermore, 
    $$
    m^2 \to \tfrac{\| \alpha \|^2}{2}.
    $$
\end{lem}

\begin{proof}
    Note that by definition, for $i = 1,2,$
$$e^{-\beta_i^2} = \frac{a_{i-1}a_{i+1}}{(a/4)^2}=\frac{(1+c_{i-1}\delta)(1+c_{i+1}\delta)}{(1+\frac{c_{i-1}+c_{i+1}}{2}\delta)^2}=1-\tfrac14(c_{i-1}-c_{i+1})^2\delta^2+o(\delta^2),$$
as desired. Now, 
\begin{align*}
1- \frac{m^2 \delta^2}{2} &= \exp ( - \tfrac{\beta_1^2 + \beta_2^2}{4} )  = \exp\left( - \tfrac{\delta^2}{16} \Big( ( c_2 - c_0)^2 + (c_3 - c_1)^2\Big)\right)
\end{align*}
so that 
$$
m^2 = \tfrac{1}{8} \Big( ( c_2 - c_0)^2 + (c_3 - c_1)^2\Big) + o( \delta^2).
$$
On the other hand, $\alpha = \tfrac12 \sum_{k=0}^3 c_k \mathbf{i}^k = ( (c_2 - c_0)/2, (c_3 - c_1)/2)$ so that 
$$
\|\alpha\|^2 = \tfrac14 ( (c_2-c_0)^2 + (c_3-c_1)^2),
$$
which concludes the proof. 
\end{proof}

While an exact connection between massive random walk and the random walk with drift conditioned on the exit point holds only for the triangular lattice, a similar statement holds asymptotically also for the square lattice.
To establish the connection between the two random walks we first prove the following lemma:

\begin{lem}\label{L:ascancel}
Fix $\alpha^\delta = \delta \alpha + o(\delta)$, where $\alpha \in \R^2$ is fixed. Let $\sigma = \sigma^\delta$ be the first time the random walk leaves the domain $\Gtree$ and $\theta<1$.
Then uniformly over $z_\delta \in \Gtree$:
$$\P^{(\alpha^\delta)}_{z_\delta}(|V_\sigma-\sigma \frac{\|\beta\|^2}2|>\delta^\theta)\rightarrow 0,$$
as $\delta\to0$. In particular this holds also under $\P^{(0)}_{z_\delta}$.
\end{lem}

\begin{proof}
Since $(X^{\delta}_{2\delta^{-2} t})_{t\ge 0}$ converges to Brownian motion with drift $\alpha \in \R^2$, $\sigma = \sigma^\delta$ is of order $\delta^{-2}$ and fluctuates on that scale, i.e. the distribution of $\delta^2 \sigma^{\delta}$ has a nontrivial weak limit, which simply is the law of the exit time $\sigma$ of $\Omega$ by a Brownian motion with drift $\alpha$ (let $\P^{(\alpha)}$ denote its law).

Let $\epsilon>0$.
Choose $K$ large enough that $\P^{(\alpha)}_{z}(\sigma>K)<\epsilon$ uniformly over $z\in \Omega$.
For $\delta$ small enough it follows by compactness that
$$\P_{z_\delta}^{(\alpha^\delta)}(\sigma>K \delta^{-2})<2\epsilon.$$
At each step, the walk takes a horizontal or a vertical step, each with probability $\tfrac12$ (since $c_0 + c_2 = c_1 + c_3$).
Therefore $Q_n:=V_n-n\frac{\beta^2_1+\beta^2_2}{2} = V_n - n \|\beta\|^2/2$ is a martingale with increment jumps uniformly bounded by $O(\delta^2)$ by Lemma \ref{L:beta_asymp_square}. 

Hence $q_n=\delta^{-2}Q_n$ is a martingale with bounded increments, and we are interested in the terminal value of $Q_n$ at the stopping time $\sigma = \sigma^\delta$.
Using Freedman's martingale inequality (Proposition (2.1) in \cite{Freedman}), we conclude
\begin{align*}
\P^{(\alpha^\delta)}_{z_\delta}(|Q_{\sigma^\delta}|>\delta^\theta)
&\leq \P^{(\alpha^\delta)}_{z_\delta}(\sigma>K\delta^{-2})+\P^{(\alpha^\delta)}_{z_\delta}(|q_{\sigma}|>\delta^{\theta-2};\sigma \le K\delta^{-2})\\
&\leq 2\epsilon+\exp(-\frac{\delta^{2\theta-4}}{2( C\delta^{\theta-2}+K\delta^{-2} )})\\
&\leq 2\epsilon+\exp(-c\delta^{2\theta-2}),
\end{align*}
where $c$ depends only on $K$ and $\theta$ (and hence only on $\epsilon$ and $\theta$) but not on $\delta$.
The lemma follows since $\theta<1$.
\end{proof}

This allows us to prove the analogue of Corollary \ref{derivative}:

\begin{cor}\label{P:massdrift_square}
Let $\gamma^\delta$ be a path in $\Gtree$ from $o^\delta \in \Gtree $.
Then if $o^\delta\to o\in\Omega$ and $a^\delta\to a\in\partial\Omega$
$$
    \frac{\mathbb{P}^{( \alpha^\delta)}_{o^\delta}(\gamma^\delta)}{\mathbb{P}^{(\rho^\delta)}_{o^\delta}(\gamma^\delta)}\rightarrow\exp(\langle (a-o), \alpha\rangle ),
$$
in probability as $\delta$ goes to $0$, under either the law $\P^{(\rho^\delta)} ( \cdot | Y_\delta = a^\delta)$ or $\P^{(\alpha^\delta)} ( \cdot | Y_\delta = a^\delta)$.
\end{cor}

\begin{proof}
By \ref{L:discreteGsq} we have that the ratio satisfies:
$$\frac{\mathbb{P}^{( \alpha^\delta)}_{o^\delta}(\gamma^\delta)}{\mathbb{P}^{(\rho^\delta)}_{o^\delta}(\gamma^\delta)}=\exp(M_n-\tfrac12V_n+n\frac{\|\beta\|^2}{4})=\exp(M_n)\exp\left(\tfrac12(n\frac{\|\beta\|^2}{2}-V_n)\right).$$
Since we are only considering paths that do not die before reaching their endpoint, Lemma \ref{L:ascancel} applies and the second term converges in probability to $1$ with respect to $\P^{(\alpha^\delta)}$
since the term in the exponential converges to $0$ in probability. 


On the other hand, as in the triangular case, since $\alpha^\delta$ does not depend on $v$, $M_n=\langle\delta^{-1}\alpha^\delta,a^\delta - o^\delta\rangle$, which converges to $\langle \alpha, a- o \rangle$ under our assumptions. 
\end{proof}

\begin{rem}
\label{R:RNdeterministic}
Note in particular that if $G_\delta$ is the good event
$$
G_\delta : = \{|V_{\sigma}-\sigma \frac{ \|\beta\|^2}2|\leq\delta^\theta \},
$$
then we have learnt that on $G_\delta$ we may write
$$
    \frac{\mathbb{P}^{( \alpha^\delta)}_{o^\delta}(\gamma^\delta)}{\mathbb{P}^{(\rho^\delta)}_{o^\delta}(\gamma^\delta)}= (1+ o(1))\exp(\langle (a-o), \alpha\rangle ),
$$
where the $o(1)$ term is nonrandom.
Note that since $\mathbb P^{(\rho^\delta)}(G_\delta | Y_\delta = a^\delta)\rightarrow 1$ this implies that Lemma \ref{L:derivativeLE} also holds on the square lattice.
\end{rem}

With this proposition we can now conclude to the proof of Theorem \ref{T:LERW_constant} in the case of the square lattice.

\begin{proof}[Proof of Theorem \ref{T:LERW_constant}, square lattice case] Let $F$ be a bounded continuous functional on curves in $\Omega$ (for the topology of uniform convergence of paths up to reparametrisation). Let $o, a$ and $o^\delta, a^\delta$ be as in Corollary \ref{P:massdrift_square}. Let $\gamma^\delta$ denote the random walk with jump probabilities given by \eqref{eq:jumpsquare} and let $\sigma^\delta$ denote the first time $\gamma^\delta$ leaves $\Gtree$.
Let $\LE(\gamma^\delta)$ denote the chronological loop-erasure of $\gamma^\delta$. We want to show that
\begin{equation}\label{E:goalsquare}
\E^{( \alpha^\delta)}_{o^\delta} [ F(\LE (\gamma^\delta)) ] \to \int_{a \in \partial \Omega}\E^{\text{mSLE$_2$}}_{o;a} [ F(\gamma)] \mu^{(\alpha)}_o (\dd a)
\end{equation}
where $\E^{\text{mSLE$_2$}}_{o;a}$ is the law of massive radial SLE$_2$ between $a$ and $o$ in $\Omega$, with mass $m = \| \alpha \|/{\sqrt{2}}$, and $\mu^{(\alpha)}_o (dy)$ denote the hitting distribution of Brownian motion with drift $\alpha$ of $\partial \Omega$ from $o$. Then 
\begin{align*}
\E^{( \alpha^\delta)}_{o^\delta} [ F(\LE (\gamma^\delta)) ] & = \E^{(\alpha^\delta)}_{o^\delta} [ F(\LE (\gamma^\delta) 1_{G_\delta}] + o(1)\\
& =  \E^{(\rho^{\delta})}_{o^\delta} [ F(\LE (\gamma^\delta)) 1_{G_\delta \cap \{\sigma^\delta< \infty\}} (1+ o(1)) \exp ( \langle  \alpha,  \gamma^\delta_{\sigma^\delta} - o^\delta \rangle)  ] + o(1) \\
& = (1+ o(1)) \E^{(\rho^\delta)}_{o^\delta} [ F(\LE (\gamma^\delta) 1_{\{\sigma^\delta < \infty\}} \exp ( \langle  \alpha, \gamma^\delta_{\sigma^\delta} -o^\delta \rangle) ] + o(1)
\end{align*}
by Remark \ref{R:RNdeterministic}. Now, using Lemma \ref{L:beta_asymp_square}, by \cite[Theorem 1.1]{ChelkakWan}, and since $\gamma^\delta_{\sigma^\delta}$ is a bounded, a.s. continuous functional of $\gamma^\delta$ (when $\Omega$ is bounded), we find
$$
 \E^{( \alpha^\delta)}_{o^\delta} [ F(\LE (\gamma^\delta) )  ] \to p^{(\rho)}(o)\int_{a \in \partial \Omega}\E^{\text{mSLE$_2$}}_{o;a} [ F(\gamma) \exp ( \langle  \alpha, a-o \rangle)] \mu^{(\rho)}_o (\dd a),
$$
where $\mu^{(\rho)}_o$ is the law of $X_\sigma$ under $\P_o^{(\rho)}$, conditioned on $\sigma<\infty$ and $p^{(\rho)}(o)$ is the probability of this event.

Taking $F$ to be a function of $\gamma^\delta_{\sigma^\delta}$ only, we see that
$$
\int_{a \in \partial \Omega} F(a) \mu^{(\alpha)}_o (\dd a) = p^{(\rho)}(o)\int_{a \in \partial \Omega}  F(a) \exp ( \langle  \alpha, a -o \rangle) \mu^{(\rho)}_o (\dd a),
$$
so that
$$
\mu^{(\alpha)}_o (\dd a) 
= p^{(\rho)}(o)\exp ( \langle  \alpha, a -o \rangle)\mu^{(\rho)}_o (\dd a)
$$
almost everywhere with respect to $\mu^{(\rho)}_o$. This proves \eqref{E:goalsquare} and hence Theorem \ref{T:LERW_constant} in the case of the square lattice.
\end{proof}

\begin{rem} \label{R:equalitycondition}
If we had not assumed $c_0+c_2=c_1+c_3$ we could not write the ``quadratic variation term'' $V_n$ in the form of a sum along the path of positive terms of type $\beta_k^2$, $k =1,2$. Even if we don't insist on the positivity of these terms and try to analyse the limiting behaviour, we find that $V_n$ is the sum of terms of order $\delta$ rather than $\delta^2$. The first order contribution however cancels out on the large scale and we do get a term of order 1 when $n$ is of order $\delta^{-2}$,  but it does not seem that this term can  easily be interpreted as a massive term; in particular it seems it might not be concentrated at a fixed time $n \approx t\delta^{-2}$.
In other words, the Radon-Nikodym dervative of the random walk with drift with respect to the massive random walk picks up a non-trivial contribution due to the walk taking more horizontal or vertical steps, even though the proportion of those steps behaves like $\tfrac12+c\delta$.
\end{rem}

\section{Convergence of massive LERW on the triangular lattice}\label{section:convergence}

In \cite{Lawler2001ConformalTrees} Lawler, Schramm and Werner proved that the scaling limit of the loop-erased random walk in a simply connected domain on the square lattice converges to radial SLE$_2$.
While the proof is written for the LERW on the square grid, in the last chapter it is mentioned that the proof can be adapted to more general setups; the random walk on the directed triangular lattice is explicitly mentioned as an example of an \emph{irreversible} random walk to which the proof applies.
In \cite{MakarovSmirnov} Makarov and Smirnov proposed a strategy for proving convergence of the massive LERW to massive SLE$_2$ building in part on ideas coming from Conformal Field Theory (see \cite{BauerBernardCantini09,Bauer2008LERWSLEs}).
This strategy was then successfully followed by Chelkak and Wan in \cite{ChelkakWan}, using a framework for convergence to SLE developed by Kemppainen and Smirnov in \cite{Kemppainen2017RandomEvolutions} and a recent addition \cite{Karrila2018LimitsCurves} by Karilla. We show in this section that the arguments of Chelkak and Wan in \cite{ChelkakWan} can be adapted to the directed triangular lattice which will imply a proof of Theorem \ref{T:massiveLERWtriangular}. The additional difficulty here is the lack of reversibility, which is crucially used to derive a discrete ``resolvent identity'' and is the heart of the proof in \cite{ChelkakWan}; see in particular Proposition \ref{prop: green convergence} below.
We note that a more general proof (but requiring quite a bit more work) will be given in Section \ref{S:LERW_general}, so that this section could be skipped by the reader. 

In order to stay close to the notations of \cite{ChelkakWan} we will use, in this section only, the notation $\P^{(m)}$ (instead of $\P^{(\rho)}$) for the massive random walk which dies with probability $m^2 \delta^2/2 $ at each step; likewise partition functions will be denoted e.g. by $Z^{(m)}$, as we will see below.

\subsection{Convergence of domains and curves}\label{Kemp}
For each discrete domain $\Gtree \subset\delta\mathbb{T}$ we associate a polygonal domain $\hat\Gtree \subset\mathbb{C}$ which is the union of open hexagons with side length $\delta$ centered at vertices of $\Gtree$.
Notice that vertices of $\delta\mathbb{T}$ on the boundary of $\hat\Gtree$ are exactly vertices on the outer vertex boundary of $\Gtree$.

We will assume that $\hat\Gtree$ converges to $\Omega$ in the \emph{Carathéodory} topology and if this is the case write, that $\Gtree$ approximates $\Omega$.
This means that each inner point of $\Omega$ belongs to $\hat\Gtree$ for small enough $\delta$ and each boundary point of $\Omega$ can be approximated by boundery points of $\Gtree$, see, e.g., \cite{Pommerenke1992BoundaryMaps}.
Further, we assume that we are given $o^\delta\in\Gtree$ converging to $o\in\Omega$ and $a^\delta\in\partial\Gtree$ converging to $a\in\partial\Omega$.
Let $\psi_{\hat\Gtree}\colon\hat\Gtree\rightarrow\mathbb{D}$ be the unique conformal map such that $\psi_{\hat\Gtree}(o^\delta)=0$ and $\psi_{\hat\Gtree}(a^\delta)=1$.
Then it can be seen (see, e.g., \cite{Pommerenke1992BoundaryMaps}) that Carathéodory convergence is equivalent to the uniform convergence on compacts of $\psi_{\hat\Gtree}$ and $\psi_{\hat\Gtree}^{-1}$ to $\psi_{\Omega}$ and $\psi_\Omega^{-1}$ respectively.

The main theorem of \cite{Kemppainen2017RandomEvolutions} states that if a family $\Sigma$ of measures of random curves satisfies a certain annulus crossing condition, then the family is tight and furthermore, if $\mathbb{P}_n\in\Sigma$ is a weakly converging subsequence then its limit is a random Loewner chain.
Moreover if $(W^{(n)})_{n\in\mathbb{N}}$ are the driving processes of the random curves $(\gamma^{(n)})_{n\in\mathbb{N}}$ that satisfy the annulus crossing condition which are parametrized by capacity then:
\begin{itemize}
    \item $(W^{(n)})_{n\in\mathbb{N}}$ is tight in the space of continuous functions on $[0,\infty)$ with the topology of uniform convergence on compact subsets.
    \item $(\gamma^{(n)})_{n\in\mathbb{N}}$ is tight in the space of curves up to reparametrization with the supremum norm.
\end{itemize}
If the sequence converges in either of the topologies it also converges in the other one and the limit of the driving processes is the driving process of the limiting random curve.

That the annulus crossing condition is satisfied is checked for a chordal loop-erased random walk in \cite[Section 4.5]{Kemppainen2017RandomEvolutions} with a remark that the radial case is equivalent to calculations in \cite{Lawler2001ConformalTrees}.

\subsection{Absolute continuity with respect to classical SLE\texorpdfstring{$_2$}{}}\label{section:Absolutecont}
Let $0<\delta<m^{-1}\leq \infty$.
Here, $m$ is the mass, which we allow to be zero and $\delta$ is the scale.
We consider subgraphs $\Gtree$ of the scaled triangular lattice $\delta \mathbb{T}$, which approximate some domain $\Omega \in \mathbb{C}$.
Given such $\delta, m$ and $\Gtree$ as well as two vertices $x^\delta, y^\delta$ we define the partition function of the massive random walk:
\begin{equation}
    Z_{\Gtree}^{(m)}(x^\delta,y^\delta)\coloneqq\sum_{\pi^\delta\in S(x^\delta,y^\delta)} \left(\frac{1}{3}(1-\frac{m^2\delta^2}{2})\right)^{\#\pi^\delta},
\end{equation}
where the sum is over all possible paths $\pi^\delta$ from $x^\delta$ to $y^\delta$ remaining in $\Gtree$.
If $m=0$ this corresponds to the classical random walk and we drop the superscript $(m)$; thus
$$ Z_{\Gtree}(x^\delta,y^\delta) =  Z_{\Gtree}^{(0)}(x^\delta,y^\delta) .
$$
If $x^\delta$ is an interior vertex and $y^\delta$ is a vertex on the boundary, this is the probability that a random walk with killing rate $\frac{m^2\delta^2}{2}$ started at $x^\delta$  leaves the boundary at $y^\delta$ without any conditioning. More generally, $Z_{\Gtree} ( x^\delta, y^\delta)$ is the \textbf{discrete massive Green function}, i.e. the expected number of visits to $y^\delta$ starting from $x^\delta$ before hitting the boundary or being killed.
Note that, because of the directed edges in general $Z_{\Gtree}^{(m)}(x^\delta,y^\delta)\neq Z_{\Gtree}^{(m)}(y^\delta,x^\delta)$.
In the limit however, we will see (in section \ref{massiveconvergence}) that equality holds.

To apply the tightness results to the massive case we first need some estimates on this partition function, which are similar (but easier in some respects) as Lemma 2.4 and Proposition 2.5 in \cite{ChelkakWan}.
\begin{prop}\label{P:abscont}
For each domain $\Gtree$ with $\Gtree \subset B(0,1)$, for each $\eps>0$ there exists $c>0$ (depending only on $\eps>0$) such that the following holds.
For each
 interior point $x^\delta$ at distance at least $\eps>0$ from the boundary, and for each boundary point $a^\delta$, as long as $\delta\leq \frac 1 2 m^{-1}$, one has
\[
\frac{Z_{\Gtree}^{(m)}(x^\delta,a^\delta)}{Z_{\Gtree}(x^\delta,a^\delta)}\geq \exp(-cm^2).
\]
\end{prop}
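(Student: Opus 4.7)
My plan is to use a probabilistic representation of the ratio combined with Jensen's inequality. I first rewrite
\[
\frac{Z_\Gtree^{(m)}(v^\delta, b^\delta)}{Z_\Gtree(v^\delta, b^\delta)} \;=\; \mathbb{E}_Q\!\left[(1-m^2\delta^2)^L\right],
\]
where $Q$ denotes the probability measure on lattice paths $\pi\colon v^\delta\to b^\delta$ in $\Gtree$ with weight $(1/3)^{L(\pi)}/Z_\Gtree(v^\delta, b^\delta)$ and $L=L(\pi)$ is the random path length. Since $\ell\mapsto(1-m^2\delta^2)^\ell$ is convex in $\ell$, Jensen's inequality gives
\[
\mathbb{E}_Q[(1-m^2\delta^2)^L] \;\geq\; (1-m^2\delta^2)^{\mathbb{E}_Q[L]} \;\geq\; \exp\!\left(-2m^2\delta^2\,\mathbb{E}_Q[L]\right),
\]
using $\log(1-x)\geq-2x$ for $x\in[0,1/2]$, which is valid since $\delta\leq \frac{1}{2}m^{-1}$ forces $m^2\delta^2\leq 1/4$.

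The problem thus reduces to proving the uniform estimate $\mathbb{E}_Q[L]\leq C(\eps)\,\delta^{-2}$. Decomposing each path from $v^\delta$ to $b^\delta$ at every vertex it visits (noting that a path of length $L$ passes through $L+1$ vertices with multiplicity) yields the identity
\[
\mathbb{E}_Q[L] + 1 \;=\; \frac{1}{Z_\Gtree(v^\delta, b^\delta)} \sum_{w\in v(\Gtree)} Z_\Gtree(v^\delta, w)\, Z_\Gtree(w, b^\delta).
\]
Under the diffusive scaling $Z_\Gtree(v,w)\asymp G_\Omega(v,w)$ for interior points and $Z_\Gtree(w,b)\asymp \delta\, P_\Omega(w,b)$ for $b$ on the boundary (where $G_\Omega$ and $P_\Omega$ denote the continuum Green's function and Poisson kernel of $\Omega$), the right-hand side multiplied by $\delta^2$ corresponds to $\mathbb{E}_v^h[\tau_\Omega]$, the expected exit time of Brownian motion under the Doob $h$-transform with $h=P_\Omega(\cdot,b)$, i.e.\ of Brownian motion conditioned to exit at $b$. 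For $\Omega\subset B(0,1)$ and $v$ at distance at least $\eps$ from $\partial\Omega$, standard potential-theoretic estimates (using Harnack's inequality in the bulk and boundary Harnack near $b$) give $\mathbb{E}_v^h[\tau_\Omega]\leq C(\eps)$ uniformly over all such $\Omega$, so that combining with Jensen above yields the desired bound $\exp(-cm^2)$ with $c=2C(\eps)$.

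The main technical obstacle is to make the above heuristic rigorous at the discrete level on the directed triangular lattice, where the usual reversibility of simple random walk is absent. The required discrete Green's-function and harmonic-measure bounds can still be obtained from martingale arguments (optional stopping applied to the discrete-harmonic function $w\mapsto Z_\Gtree(w,b^\delta)$) together with the observation that the reversed walk on $\delta\mathbb{T}$ is itself a simple random walk on a triangular lattice with opposite orientation, which permits explicit comparisons between $Z_\Gtree(v,w)$ and $Z_\Gtree(w,v)$. These ingredients together yield the required uniform bound $\sum_w Z_\Gtree(v^\delta,w)Z_\Gtree(w,b^\delta)\leq C(\eps)\,\delta^{-2}\,Z_\Gtree(v^\delta,b^\delta)$, concluding the argument.
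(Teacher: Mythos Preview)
Your opening is identical to the paper's: both express the ratio as $\mathbb{E}\big[(1-m^2\delta^2)^{\#\pi^\delta}\big]$ under the law of simple random walk conditioned to exit at $b^\delta$, apply Jensen's inequality, and reduce the problem to the expected-length bound $\mathbb{E}[\#\pi^\delta]\le C(\eps)\,\delta^{-2}$.

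The two arguments diverge in how this bound is obtained. You write the Green's-function decomposition $\mathbb{E}_Q[L]+1=Z_\Gtree(v^\delta,b^\delta)^{-1}\sum_w Z_\Gtree(v^\delta,w)Z_\Gtree(w,b^\delta)$ and aim for discrete interior-Green and boundary-Harnack estimates. The paper instead takes the chordal version of the same bound (boundary-to-boundary excursion, imported from \cite{ChelkakWan} and \cite{Chelkak2016RobustToolbox}) as given, and reduces the radial case to it by a short coupling: an excursion from some other boundary point $a^\delta$ to $b^\delta$ reaches distance $\eps$ from the boundary with probability bounded below, and from that moment on its remaining duration dominates, up to a constant depending only on $\eps$, that of the walk from $v^\delta$ conditioned to exit at $b^\delta$ (via a Radon--Nikodym comparison of the harmonic functions $w\mapsto \P_w(X_{\tau}=b^\delta)$ at two interior points, itself handled by a loop-coupling argument).

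Your route is in principle sound, but the final paragraph is not yet a proof: you assert that the required discrete bounds ``can still be obtained'' without carrying this out. The uniformity in $\Gtree\subset B(0,1)$ and in $b^\delta$ of the discrete boundary-Harnack comparison you need is precisely the nontrivial content here, and noting that the reversed walk is again simple random walk on an oppositely oriented triangular lattice, while useful, does not by itself deliver it. The paper's approach has the advantage of outsourcing exactly this difficulty to an existing reference, at the cost of one short extra coupling step; your approach, if completed, would be more self-contained but requires writing out the robust discrete potential-theory estimates explicitly.
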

\begin{proof}
We proceed as in the proof of Proposition 2.5 in \cite{ChelkakWan}. By Jensen's inequality (since $1-\frac{m^2\delta^2}{2} \ge 0$): 
\[
\frac{Z_{\Gtree}^{(m)}(x^\delta,a^\delta)}{Z_{\Gtree}(x^\delta,a^\delta)}=\mathbb{E}_{x^\delta\to a^\delta}\left((1-\frac{m^2\delta^2}{2})^{\sigma_{\Gtree}}\right)\geq (1-\frac{m^2\delta^2}{2})^{\mathbb{E}_{x^\delta\to a^\delta}(\sigma_{\Gtree})},
\]
where the expectation is for a classical random walk started at $x^\delta$ conditioned to leave $\Gtree$ at $a^\delta$ and $\sigma$ is the time at which this random walk exits $\Gtree$.
Therefore it suffices to show
\begin{equation}\label{E:radialestimate}
\mathbb{E}_{x^\delta\to a^\delta}(\sigma_{\Gtree})\leq \text{const}\cdot \delta^{-2}.
\end{equation}
This is a statement about the simple random walk and follows as in \cite[Corollary 2.8]{ChelkakWan}. This relies on \cite[Lemma 2.7]{ChelkakWan}, which uses the discrete Harnack inequality.
The discrete Harnack inequality is satisfied on the directed triangular lattice, see e.g. Lemma~\ref{lem:Harnack} below for the discrete Harnack inequality valid in the massive case, which includes in particular the non-massive case.
\end{proof}

From this (just as in \cite[Section 2.5]{ChelkakWan}) it follows that the densities of massive LERW with respect to classical LERW are uniformly bounded from above by $\exp(cm^2R^2)$ and thus the tightness of the law of massive LERW follows.
Also, (as in \cite[Section 2.6]{ChelkakWan}) it follows that each subsequential limit of $\P^{(m)}_{\Gtree}$ is absolutely continuous with respect to the SLE$_2$ on $\Omega$.
Thus we can use Girsanov's theorem to find the driving term of $\xi_t$ of the Loewner evolution under $\P^{(m)}_{\Gtree}$, see also Section~\ref{S:absolutecontinuity} for the analogous statement in a more general setting.

\subsection{Convergence of the Green function}\label{massiveconvergence}

In this section we prove the convergence of $Z^{(m)}_{\Gtree}(x^\delta,y^\delta)$ to a multiple of the massive Green function $G^{(m)}_\Omega(x,y)$.
To do so we will show that $G^{(m)}_{\Gtree} (x^\delta, \cdot)$ is precompact in a suitable space of functions, and we will show that any subsequential limit must
satisfy the following three properties:
\begin{align}
    G^{(m)}_\Omega(x,\cdot)&=0 \text{ on the boundary of $\Omega$,}\label{massiveboundary}\\
    (-\tfrac{1}{2}\Delta+m^2)G^{(m)}_\Omega(x,\cdot)&=0\text{ away from x, and }\label{massiveharmonic}\\
    G^{(m)}_\Omega(x,y)&= \frac{1}{\pi}\log(|x-y|^{-1})+O(1) \text{ as $y\rightarrow x$.}\label{massivelog}
\end{align}
As we will see, these three properties uniquely characterise the the (continuous) \textbf{massive Green function}; from this the desired convergence will follow immediately.
The second condition is that $G(u,\cdot)$ is a massive harmonic function. It will be useful to appeal to the discrete notion of massive harmonicity: given $m\ge 0$ we call a function $H$ massive discrete harmonic at $v\in\delta\mathbb{T}$ if
\begin{equation}
    H(v)=\frac{1}{3}(1-\frac{m^2\delta^2}{2})\sum_{w\in\delta\mathbb{T}:w\sim v}H(w).
\end{equation}

\begin{rem}
    Note that discrete massive harmonic functions with mass $m$ correspond massive harmonic functions in the sense of \eqref{massiveharmonic}.
    Indeed, the graph Laplacian approximates $\frac{1}{4}\Delta$ as $\delta\to0$ and thus a limit $h$ of massive harmonic functions $h^\delta$ on $\delta\T$ satisfies
    \[
    (\frac14\Delta-\frac{m^2}{2})h=\frac12 (\frac12\Delta-m^2)h=0\,.
    \]
    This is precisely the reason for the factor $\frac{1}{2}$ in the definitions of the massive random walk.
\end{rem}
$H$ being a discrete massive harmonic function is equivalent to being discrete harmonic on the augmented graph where every vertex is connected to an additional cemetery vertex, where the transition probability to the cemetery vertex is $\frac{m^2\delta^2}{2}$ from every point; and the value of $H$ at the cemetery vertex being 0. We immediately deduce:

\begin{lem}\label{L:Dirichlet}
Let $\Gtree$ be a bounded domain in $\delta\mathbb{T}$ and $(X_n)_{n\in\mathbb{N}}$ be a massive random walk with mass $\frac{m^2\delta^2}{2}$.
Let $H$ be a bounded real valued function defined on $\Gtree\cup\partial\Gtree$ and massive discrete harmonic at every point of $\Gtree$.
Denote by $\mathbb{P}^{(m)}_{x^\delta}$ the law of this walk started at $x^\delta$ and by $\mathbb{E}^{(m)}_{x^\delta}$ the corresponding expectation.
Let $\sigma_{\Gtree}$ be the hitting time of the boundary and let $\sigma^*$ denote the killing time, i.e. the hitting time of the cemetery vertex.
Then
\[
H(x^\delta)=\mathbb{E}^{(m)}_{x^\delta}\Big(H(X_{\sigma_{\partial\Gtree}})1_
{\{
\sigma^* > \sigma_{\Gtree}
\}}
\Big).
\]
\end{lem}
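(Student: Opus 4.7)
The plan is to prove this by augmenting the state space with a cemetery vertex and then applying the optional stopping theorem to a suitable bounded martingale. Concretely, I would extend $H$ to the cemetery state $\dagger$ by setting $H(\dagger) = 0$, and consider the process
$$
M_n := H(X_{n \wedge \tau_{\Gtree}}) \, \mathbf{1}_{\{\tau^* > n \wedge \tau_{\Gtree}\}}
$$
under $\mathbb{P}_v^{(m)}$.

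The first step is to check that $(M_n)$ is a martingale with respect to the natural filtration of $(X_n)$. This is essentially a rewriting of the massive discrete harmonicity hypothesis: for $v \in \Gtree$, the one-step expectation is
$$
\mathbb{E}_v^{(m)}\!\left[H(X_1)\, \mathbf{1}_{\{\tau^* > 1\}}\right] = \frac{1-m^2\delta^2}{3}\sum_{w \sim v} H(w) = H(v),
$$
which is exactly the defining relation. For $v \in \partial\Gtree$ the process is frozen by definition of $\tau_{\Gtree}$, so the martingale property is trivial there.

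Second, I would argue that the stopping time $\sigma := \tau_{\Gtree} \wedge \tau^*$ is almost surely finite. If $m > 0$ then $\tau^*$ is geometric with parameter $m^2 \delta^2 > 0$, so already $\sigma \leq \tau^* < \infty$ a.s. If $m = 0$ then $\tau^* = \infty$ identically, but $\Gtree$ is a bounded subdomain of $\delta\mathbb{T}$, so the non-killed walk exits in finite time a.s. In both cases, since $H$ is bounded on $\Gtree\cup\partial\Gtree$ and $H(\dagger)=0$, the martingale $M_n$ is uniformly bounded, and I can apply bounded convergence / the optional stopping theorem at $\sigma$ to conclude $H(v) = \mathbb{E}_v^{(m)}[M_\sigma]$.

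The final step is to unpack $M_\sigma$. On the event $\{\tau^* > \tau_{\Gtree}\}$ we have $\sigma = \tau_{\Gtree}$ and the indicator is $1$, so $M_\sigma = H(X_{\tau_{\Gtree}})$; on the complementary event $\{\tau^* \leq \tau_{\Gtree}\}$ the indicator vanishes, so $M_\sigma = 0$. Combining gives the claimed formula. There is no real obstacle here — the only mildly delicate point is bookkeeping the $m = 0$ and $m > 0$ cases uniformly when justifying $\sigma < \infty$ a.s., but boundedness of $\Gtree$ handles the massless case and the geometric killing handles the massive case.
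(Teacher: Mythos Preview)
Your proof is correct and follows the same approach as the paper: the paper simply observes, just before stating the lemma, that massive discrete harmonicity is equivalent to ordinary discrete harmonicity on the augmented graph with a cemetery vertex where $H$ is set to zero, and then states the lemma as an immediate consequence. You have written out the optional stopping argument that the paper leaves implicit, but the idea is identical.
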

The above statement needs to be interpreted carefully as we defined the boundary $\partial \Gtree$ to be the edge boundary, that is, pairs $(y_1, y_2)$ of vertices such that exactly one of these vertices (say $y_1$) lies in $\Gtree$. In the above statement, we abusively identify $\partial \Gtree$ with the outer vertex boundary (i.e., the vertices of the form $y_2$ where $(y_1, y_2)$ is a boundary edge such that $y_1 \in \Gtree $ but $y_2 \notin \Gtree$).
Now we can prove the uniqueness of the Green function:
\begin{lem}\label{lem:Green uniqueness}
For each $x\in\Omega$ and $k\in\mathbb{R}^+$ there is exactly one function $G(x,\cdot)\colon\Omega\rightarrow\mathbb{R}$ that is massive harmonic away from $x$, $0$ on the boundary, and satisfies
\[
G(x,\cdot)= k\log(|x-y|^{-1})+o(\log|x-y||) \text{ as $y\rightarrow x$.}
\]
\end{lem}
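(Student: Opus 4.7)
The plan is to reduce the uniqueness to showing that the difference $H := G_1 - G_2$ of any two candidates must vanish identically. By the hypothesis, $H$ is massive harmonic (i.e. $(-\Delta+m^2)H=0$) on $\Omega\setminus\{u\}$, vanishes on $\partial\Omega$, and, since the two logarithmic singularities with prefactor $k$ cancel in the difference, satisfies $H(v)=o(|\log|u-v||)$ as $v\to u$. The sublogarithmic behaviour at $u$ is the crucial feature to exploit: it is precisely weaker than the logarithm, which is the strength of the fundamental solution of the massive Laplacian in two dimensions.

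To show $H(v)=0$ at an arbitrary $v\in\Omega\setminus\{u\}$, I will work on the punctured domain $D_\epsilon := \Omega \setminus \overline{B(u,\epsilon)}$ for $\epsilon>0$ small enough that $|u-v|>\epsilon$. On $D_\epsilon$ the function $H$ is smooth, bounded, massive harmonic and vanishes on the outer boundary $\partial\Omega$. The continuous analog of Lemma \ref{L:Dirichlet} (the standard Feynman--Kac representation for $(-\Delta+m^2)$) gives
\[
H(v) \;=\; \mathbb{E}_v\!\left[H(B_{\tau_{D_\epsilon}})\cdot \mathbf{1}_{\{B_{\tau_{D_\epsilon}}\in\partial B(u,\epsilon),\ \tau_{D_\epsilon}<\tau^*\}}\right]\!,
\]
where $B$ is planar Brownian motion started at $v$, $\tau^*$ its independent killing time at rate $m^2$, and $\tau_{D_\epsilon}$ the exit time of $D_\epsilon$. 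Bounding the integrand by its supremum then yields
\[
|H(v)| \;\leq\; \Big(\sup_{w\in\partial B(u,\epsilon)}|H(w)|\Big)\cdot \mathbb{P}_v\!\left(B\text{ hits }\partial B(u,\epsilon)\text{ before }\partial\Omega\text{ and before dying}\right)\!.
\]

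To finish, I will combine the hypothesis $\sup_{\partial B(u,\epsilon)}|H|=o(|\log\epsilon|)$ with the classical two-dimensional harmonic-measure estimate that the probability in the displayed inequality is $O(1/|\log\epsilon|)$; this follows from the logarithmic behaviour of the unkilled planar Green function, and the killing only makes the probability smaller. Letting $\epsilon\to 0$ forces $H(v)=0$, and as $v$ was arbitrary, $H\equiv 0$. The main obstacle is not conceptual but rather verifying that the Feynman--Kac representation applies up to $\partial\Omega$ and that the harmonic-measure estimate holds with constants uniform as $\epsilon\to 0$; both are standard but need to be quoted carefully. An equivalent PDE route, which bypasses the probabilistic representation, is to observe that the $o(\log)$ bound is a removable-singularity condition for $(-\Delta+m^2)$, so that $H$ extends to a massive harmonic function on all of $\Omega$ vanishing on $\partial\Omega$, and then apply the weak maximum principle for $-\Delta+m^2$.
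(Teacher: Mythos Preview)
Your proposal is correct and is essentially the same argument as the paper's: the paper too takes the difference of two candidates, uses the Feynman--Kac/optional-stopping representation for massive Brownian motion on $\Omega\setminus B(u,r)$, combines the $o(|\log r|)$ bound on $\partial B(u,r)$ with the $O(1/|\log r|)$ harmonic-measure estimate, and sends $r\to 0$. Your alternative removable-singularity/maximum-principle route is a valid variant but is not the one the paper uses.
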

\begin{proof}
Let $h$ and $g$ be two such functions.
Then $f\coloneqq h-g$ is a massive harmonic function that is massive harmonic away from $x$, $0$ on the boundary, and  $$
f(y) = o(\log(|y-x|))$$ as $y\to x$.
Fix $z \neq x \in\Omega $ and let $\P_z^{(m)}$ be the law of massive Brownian motion with mass $m$ started at $z$: thus if $\sigma^*$ denote an exponential random variable with rate $m^2$ then by definition
$$
\E^{(m)}_z ( f(B_t)) = \E_z ( f(B_t) 1_{\{ \sigma^* > t\} }).
$$
Since $f$ is massive harmonic,  $M_t = f(B_t)1_{\{\sigma^*>t\}}$ is a $\P^{(m)}_z$-local martingale.
Let $r>0$, $B(x,r)$ be the disk of radius $r$ with center $x$, $\sigma_r$ the hitting time of $B(x,r)$ and $\sigma_{\Omega}$ the hitting time of $\partial\Omega$.
It is a well known fact about Brownian motion that the probability that
$$
\P_z (\sigma_r<\sigma_{\Omega}) \lesssim 1/\log (1/r),
$$
as $r\to 0$.
(This can be seen by applying the optional stopping theorem to the $\P_x$-local martingale $\log|B_t - x|$, see for example \cite{LeGall2016BrownianCalculus}).
By applying the optional stopping theorem to $M$ under $\P_z^{(m)}$ (which is justified since $f$ is smooth and hence bounded away from $x$, as $\Omega$ is bounded) we obtain:
\begin{align*}
f(z)& =\mathbb{E}^{(m)}( M_{\sigma_r\wedge\sigma_{\Omega}}).
\end{align*}
The only contribution comes from the event $\sigma_r < \min ( \sigma^*, \sigma_\Omega)$ since if either of these two stopping times occur before $\sigma_r$ then the martingale is equal to zero. Hence
$$
f(z) = \E_z ( f ( B_{\sigma_r} )1_{\sigma_r < \min ( \sigma^*, \sigma_\Omega)} )
$$
But $f(B_{\sigma_r}) = o(\log(r))$ by assumption on $f$, and
$$
\P_z( {\sigma_r < \min ( \sigma^*, \sigma_\Omega)} ) \le \P_z( \sigma_r < \sigma_\Omega) \lesssim 1/\log (1/r).$$
 Hence letting $r \to 0$ we see that $f(z) = 0$.
Since $z$ was arbitrary, we deduce $f$ is $0$ everywhere and hence $g = h$, as desired.
\end{proof}

(The existence of a function satisfying \eqref{massiveboundary}, \eqref{massiveharmonic} and \eqref{massivelog} follows from the result in \cite{ChelkakWan}, or the convergence result below.) In order to prove convergence of the discrete Green function $Z^{(m)}_{\Gtree} (x^\delta, y^\delta)$ to $G^{(m)}(x, y)$ we will show precompactness and identify the limit ultimately via Lemma \ref{lem:Green uniqueness}. The following lemma will be useful for the existence of subsequential limits:

\begin{lem}\label{lem:Harnack}
There are constants $C$ and $\beta$ depending on $m$ such that for all positive massive harmonic functions $H$ defined in $B(x_0,2r)\cap\delta\mathbb{T}$ with $r\leq m^{-1}$ and for all $x_1,x_2\in B(x_0,r)\cap\delta\mathbb{T}$ one has:
\[
|H(x_1)-H(x_2)|\leq C(|x_2-x_1|/r)^\beta \max_{x\in B(x_0,2r)\cap\Gtree}(H(x)).
\]
\end{lem}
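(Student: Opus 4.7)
The plan is to combine the probabilistic representation for massive harmonic functions with a coupling estimate. Setting $\tau = \tau_{B(v_0, 2r)}$ for the exit time of the massive random walk from $B(v_0, 2r) \cap \delta\mathbb{T}$ and $\tau^*$ for the killing time, Lemma \ref{L:Dirichlet} (applied after an arbitrary bounded extension of $H$ to the boundary) gives
\[
H(v) = \mathbb{E}_v^{(m)}\bigl[H(X_\tau)\,\mathbf{1}_{\{\tau^* > \tau\}}\bigr]
\]
for every $v \in B(v_0, 2r) \cap \delta\mathbb{T}$. Writing this as $H(v) = \int H(w)\, K_m(v, dw)$, where $K_m(v,\cdot)$ is the sub-probability kernel on $\partial B(v_0, 2r)$ describing the distribution of the exit point on the event of survival, we deduce
\[
|H(v_1) - H(v_2)| \lesssim \max(H)\cdot \| K_m(v_1, \cdot) - K_m(v_2, \cdot)\|_{TV},
\]
so that the lemma reduces to bounding this total variation distance by $C(|v_1-v_2|/r)^\beta$.

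The core step is then to couple two massive random walks $X^{(1)}, X^{(2)}$ started at $v_1, v_2 \in B(v_0,r)$ using a shared killing clock (so that $\tau^{*,(1)} = \tau^{*,(2)}$) and an optimal coupling of the non-killed increments that forces the walks to coalesce once they meet. By the coupling inequality, the TV distance above is bounded by the probability that $X^{(1)}$ and $X^{(2)}$ do not meet before the earlier of the two exit times from $B(v_0, 2r)$. In the non-massive case, this probability is controlled by $C(|v_1-v_2|/r)^{\beta_0}$ for universal constants $C,\beta_0 > 0$: this is the standard Aldous--Rogers type consequence of the parabolic Hölder regularity available for random walks satisfying conditions (S), (T) of \cite{Chelkak2016RobustToolbox}, which we have already noted to hold for the simple random walk on $\delta\mathbb{T}$. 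Under the assumption $r \leq m^{-1}$, the probability of surviving up to time $\tau$ is bounded below by a positive constant depending only on $m$, so introducing the mass modifies the coupling estimate only by a multiplicative factor that we absorb into the constant $C$.

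The main obstacle lies precisely in this coupling estimate in the directed (non-reversible) setting. On a reversible lattice, the estimate would follow from a routine coupling argument combined with Green function and electric network bounds; on the directed triangular lattice, reversibility-based tools are unavailable and one must instead rely on the general framework of \cite{Chelkak2016RobustToolbox}, which supplies uniform Gaussian heat-kernel and parabolic Hölder estimates on exactly such lattices. Once this meeting estimate is in hand, combining it with the representation and TV bound above yields the stated inequality, with exponent $\beta = \beta_0$ and constant $C$ depending on $m$ only through the lower bound on the survival probability.
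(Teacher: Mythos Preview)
Your reduction to a total–variation estimate on the massive exit kernels is fine, but the coupling you describe does not work on the directed triangular lattice. Since the only admissible steps are $1,\tau,\tau^2$ with $1+\tau+\tau^2=0$, a walk started at $v$ can occupy $v+a+b\tau$ at time $n$ only when $a+b\equiv n\pmod 3$. Hence two walks started at $v_1$ and $v_2=v_1+p+q\tau$ can be at the same vertex at the same time only if $p+q\equiv 0\pmod 3$; for two thirds of the pairs $(v_1,v_2)$ they can \emph{never} coalesce, and ``force the walks to coalesce once they meet'' is vacuous. (This is the same periodicity obstruction as for nearest–neighbour walk on $\Z^2$, where one uses lazy walks or time–shifted couplings to get around it; you do neither.) Invoking \cite{Chelkak2016RobustToolbox} does not rescue the argument as written: that toolbox yields H\"older regularity for discrete (massive) harmonic functions via oscillation reduction, not via an Aldous--Rogers meeting estimate, so you are citing the conclusion you want while supplying a construction that does not produce it.

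The paper's proof follows exactly that oscillation–reduction route and isolates the one estimate that needs checking in the directed, massive setting: for the annulus $A=A(v_0,r,2r)$, the massive walk started anywhere on the middle circle makes a noncontractible loop in $A$ before leaving it and before dying, with probability bounded below uniformly in $\delta,r,v_0$. This is proved by restricting to $\{\tau_A\le M r^2\delta^{-2}\}$ and comparing to the non-massive walk, where the loop probability is uniformly positive by convergence to Brownian motion; the mass then costs at most a factor $(1-m^2\delta^2)^{Mr^2\delta^{-2}}\ge e^{-Mm^2r^2}\ge e^{-M}$ since $r\le m^{-1}$. From the loop estimate the standard argument (as in \cite[Lemma~3.10]{ChelkakWan}) gives geometric decay of the oscillation across dyadic scales and hence the stated H\"older bound. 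If you want to keep your TV framework, the cleanest fix is to replace the coalescence coupling by this loop estimate and deduce $\|K_m(v_1,\cdot)-K_m(v_2,\cdot)\|_{TV}\le C(|v_1-v_2|/r)^\beta$ from oscillation reduction applied to the kernel $w\mapsto K_m(w,E)$ for each boundary set $E$.
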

\begin{proof}
Essentially,  one  can follow the argument of \cite[Lemma 3.10]{ChelkakWan}. Its proof relies on the following estimate: for any annulus $A=A(x_0,r,2r)$, let $E(A)$ be the event that $X_n$ makes a non-trivial loop around in the annulus before leaving it and before dying, i.e. there are $0<s<t<\sigma_{ A}$ such that $X[s,t]$ disconnects $x_0$ from $\infty$; and $\sigma^* > \sigma_A$.
Then there exists a positive constant $c>0$ independent of $\delta, r, x_0$, and $x$ such that:
\begin{equation}
\mathbb{P}^{(m)}_x(E(A(x_0,r,2r)))\geq c,
\end{equation}
for all $8\delta<r\leq m^{-1}$ and all $v\in\delta\mathbb{T}$ such that $\frac 3 2 r - \delta \leq |x_0-v|\leq \frac 3 2 r + \delta$. This needs to be established in our directed context, which is not covered explicitly by \cite{ChelkakWan}. To see this, simply observe that we can restrict to $\sigma_A \le Mr^2 \delta^{-2}$ for some large $M$.
Using this we obtain
\begin{align*}
\mathbb{P}^{(m)}_x(E(A)) & \geq \mathbb{P}^{(m)}_x(E(A) ; \sigma_A \le M \delta^{-2} r^2) \\
& \geq  \mathbb{P}^{(0)}_x(E(A) ; \sigma_A \le M \delta^{-2} r^2) (1-\frac{m^2\delta^2}{2})^{M \delta^{-2} r^2}\\
& \ge \exp ( - (M/2) r^2 ) [ \P^{(0)}_x (E(A)) - \P^{(0)}_x( \sigma_A > M \delta^{-2} r^2)]\,.
\end{align*}
It is well-known and easy to see that $ \P^{(0)} (E(A)) $ is bounded away from 0 (by convergence to Brownian motion) and the second term can be made arbitrarily small by choosing $M$ sufficiently large. The result follows.
\end{proof}


We also need the following lemma about convergence of the conditioned (non-massive) random walk to a Brownian bridge:
\begin{lem}\label{bridge convergence}
Let $t>0$. Let $X_n^\delta$ be the simple random walk on $\delta\mathbb{T}$ started at $x^\delta$ converging to $x$.
Let $y^\delta\in\delta\mathbb{T}$ approximate $y$ such that for any $\delta>0$ small enough it is possible to go from $x^\delta$ to $y^\delta$ in $\lfloor\delta^{-2}t\rfloor$ steps with positive probability.
Then the law of $(X^\delta_{\lfloor\delta^{-2}s\rfloor})_{s\in[0,t]}$ conditioned on $X^\delta_{\lfloor\delta^{-2}t\rfloor}=y^\delta$ converges to the law of the Brownian bridge $(b_s)_{s\in[0,t]}$ from $x$ to $y$ of duration $t>0$.
\end{lem}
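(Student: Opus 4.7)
The plan is to follow the standard two-step strategy for such conditional invariance principles: prove convergence of finite-dimensional distributions using a local central limit theorem (LCLT) together with the Markov property, and then verify tightness in $C([0,t],\C)$ by comparison with the unconditioned walk. The first ingredient is an LCLT for the simple random walk on $\delta\mathbb{T}$. The steps take the values $\delta,\delta\tau,\delta\tau^2$ with probability $1/3$ each; since $1+\tau+\tau^2=0$ the steps have mean zero, and by the three-fold rotational symmetry their covariance matrix is a scalar multiple of the identity. The directedness of the edges plays no role at this step, only the law of the increments matters. Standard Fourier analysis (see e.g.\ Chapter 2 of \cite{LawlerLimic}) then yields, uniformly over $z^\delta$ reachable from $x^\delta$ in $n$ steps and within distance $n^{1/2}\delta\log(1/\delta)$,
\[
\P_{x^\delta}(X_n^\delta=z^\delta)=\frac{c}{n}\exp\!\left(-\frac{|z^\delta-x^\delta|^2}{\sigma^2 n\delta^2}\right)(1+o(1)),
\]
with explicit $c,\sigma>0$ depending on the lattice and on the sublattice class.

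For finite-dimensional convergence, fix $0<s_1<\cdots<s_k<t$ and write $n_j=\lfloor\delta^{-2}s_j\rfloor$, $n=\lfloor\delta^{-2}t\rfloor$. By the Markov property, the conditional probability of $\{X^\delta_{n_j}=z^\delta_j:1\le j\le k\}$ given $X^\delta_n=y^\delta$ equals
\[
\frac{p^\delta_{n_1}(x^\delta,z_1^\delta)\prod_{j<k}p^\delta_{n_{j+1}-n_j}(z_j^\delta,z_{j+1}^\delta)\cdot p^\delta_{n-n_k}(z_k^\delta,y^\delta)}{p^\delta_n(x^\delta,y^\delta)},
\]
where $p^\delta_n$ denotes the $n$-step transition probability. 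The reachability assumption on $y^\delta$ guarantees that the denominator is strictly positive. Substituting the LCLT asymptotics and rescaling displacements by $\delta$, this ratio converges to the joint density of the Brownian bridge from $x$ to $y$ of duration $t$ at the times $s_1,\ldots,s_k$.

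For tightness, on an initial interval $[0,t-\eta]$ with $\eta>0$ fixed, the LCLT provides a uniform upper bound $p^\delta_{n-n_s}(z^\delta,y^\delta)/p^\delta_n(x^\delta,y^\delta)\le K(\eta)$ for $z^\delta$ in any fixed compact set and $s\le t-\eta$. This allows one to transfer standard modulus-of-continuity estimates from the unconditioned walk (available from Donsker's theorem together with Aldous' or Kolmogorov's criterion) to the conditioned walk on $[0,t-\eta]$. The terminal interval $[t-\eta,t]$ is handled by time reversal: the time-reversal of the conditioned simple random walk from $x^\delta$ to $y^\delta$ of duration $n$ is itself a conditioned simple random walk with reversed steps ($-1,-\tau,-\tau^2$, still centered and isotropic) from $y^\delta$ to $x^\delta$, to which the same estimate applies. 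The main technical obstacle will be establishing the LCLT with uniformity across the sublattice periodicity of the walk (at step $n$, only $1/3$ of the triangular lattice is reachable, with the coset depending on $n\bmod 3$); this is classical but requires some care. Once the LCLT is in hand, both finite-dimensional convergence and tightness are routine, and Lemma \ref{bridge convergence} follows.
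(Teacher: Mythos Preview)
Your proposal is correct and uses the same analytic ingredients as the paper---a local CLT for the walk on $\delta\mathbb{T}$, the Markov property to reduce the conditioning to a bounded density factor away from the terminal time, and time reversal to handle the neighbourhood of $t$. The organisation differs slightly: you carry out the standard fdd-plus-tightness scheme explicitly, whereas the paper compresses both steps by fixing $u=2t/3$, proving weak convergence of the conditioned path on $[0,u]$ directly via the density ratio $\P_{x^\delta}(S^\delta_t=y^\delta\mid S^\delta_u)/\P_{x^\delta}(S^\delta_t=y^\delta)$ and Donsker, and then covering $[t/3,t]$ by the same argument applied to the time-reversed walk. Your route is more systematic and makes the periodicity issue in the LCLT explicit; the paper's is shorter but leaves the gluing of the two sub-intervals implicit.
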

\begin{proof}
We interpolate linearly between vertices to consider $(X^\delta_{\lfloor\delta^{-2}s\rfloor})_{s\in[0,t]}$ as a continuous function on $[0,t]$.
Let $(S^\delta_s)_{0 \le s \le t}$ be this interpolation.
Fix $u \in[0,t)$, and let us show that $(S^\delta)_{0 \le s \le u}$ converges to $(b_s)_{0 \le s \le u}$.
Fix $F\colon C([0,u])\rightarrow\mathbb{R}$ be a bounded continuous functional.
Then the conditioning $S^\delta_t=y^\delta$ weights every path $(S^\delta_s)_{s\in[0,u]}$ by how likely it is to go to $y^\delta$ from $S^\delta_u$.
Thus the conditional expectation of the functional can be rewritten as:
\begin{equation}\label{E:RN}
\mathbb{E}_{x^\delta}(F((S^\delta_s)_{s\in[0,u]})|S^\delta_t=y^\delta)=
\mathbb{E}_{x^\delta}\left(F((S^\delta_s)_{s\in[0,u]})
\frac{\mathbb{P}_{x^\delta}(S^\delta_t=y^\delta|S^\delta_u)}{ \P_{x^\delta}(S^\delta_t=y^\delta)} \right).
\end{equation}
The probability in the enumerator can be written as $\P_{y^\delta} (S^\delta_{t-u } = y^\delta)$, with $y^\delta = S^\delta_u$. The ratio of probabilities therefore converges and the limit is
\[
\mathbb{E}(F((B_s)_{s\in[0,u]})\frac{\varphi(\frac{y-B_u}{t-u})}{\varphi(\frac{y}{t})})=\mathbb{E}(F((b_s)_{0 \le s \le u} )),
\]
where $\varphi$ is the density of a two-dimensional standard normal random variable, and $B$ is standard Brownian motion.
Since this holds for every $u<t$ it implies the statement. 

\end{proof}

We will use this to approximate the probability that a random walk conditioned on the point at time $n$ leaves a domain by the corresponding probability for the Brownian motion.
\begin{cor}\label{cor:uniform bridge}
Let $\Gtree$ approximate a domain $\Omega\in\mathbb{C}$ and $x^\delta,y^\delta$ approximate $x,y$ in $\Omega$. Let $\P_{x\to y ; t}$ denote the law of a Brownian bridge  of duration $t$ from $x$ to $y$.
For any $t>0$,
\[
\mathbb{P}_{x^\delta}(\sigma_{\Gtree} >t\delta^{-2}|X^\delta_{\lfloor t\delta^{-2}\rfloor}=y^\delta)\rightarrow P_{x,y}(t): = \mathbb{P}_{x\to y ; t}(\sigma_{\Omega}>t)
\]
\end{cor}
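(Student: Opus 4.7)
The plan is to deduce this from Lemma~\ref{bridge convergence} via the continuous mapping theorem, together with a sandwiching argument to handle the $O(\delta)$ mismatch between $\partial\Gtree$ and $\partial\Omega$.

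First I would use Lemma~\ref{bridge convergence} and Skorokhod's representation theorem to place everything on a common probability space, so that the linear interpolations $(S^\delta_s)_{s\in[0,t]}$ of the conditioned walks converge almost surely in the uniform topology to the Brownian bridge $(b_s)_{s\in[0,t]}$ from $x$ to $y$ of duration $t$. The natural candidate for the limit is then $\mathbb{P}_{x\to y;t}(b([0,t])\subset\Omega)=P_{x,y}(t)$.

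The core step is to apply the continuous mapping theorem to the functional $F\colon C([0,t])\to\{0,1\}$ defined by $F(w)=\mathbf{1}_{\{w([0,t])\subset\Omega\}}$. The key point is that $F$ is $\mathbb{P}_{x\to y;t}$-a.s.\ continuous at the bridge path: on the event $\{b([0,t])\subset\Omega\}$, compactness of $b([0,t])$ gives a strictly positive distance to $\partial\Omega$, so $F$ is locally constant near $b$; on the complementary event, the strong Markov property of the bridge together with the a.s.\ non-degeneracy of planar Brownian motion at boundary contacts shows that $b$ a.s.\ visits $\Omega^c$ in every neighborhood of any boundary-hitting time, so any sufficiently small uniform perturbation of $b$ also leaves $\Omega$. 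This relies on mild regularity of $\Omega$ that is already implicit in the paper's Carath\'eodory approximation setup.

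The remaining obstacle is that the discrete event in the corollary is $\{\tau_{\partial\Gtree}>t\delta^{-2}\}$, not $\{S^\delta([0,t])\subset\Omega\}$, and these differ because the polygonal approximation $\hat\Gtree$ lies only within $O(\delta)$ of $\partial\Omega$. To deal with this I would introduce, for small $\eta>0$, the thickenings
\[
\Omega^+_\eta=\{z\in\Omega:\dist(z,\partial\Omega)>\eta\},\qquad \Omega^-_\eta=\{z\in\mathbb{C}:\dist(z,\Omega)<\eta\},
\]
and note that for $\delta$ small enough one has the pointwise sandwich
\[
\mathbf{1}_{\{S^\delta([0,t])\subset\Omega^+_\eta\}}\;\le\;\mathbf{1}_{\{\tau_{\partial\Gtree}>t\delta^{-2}\}}\;\le\;\mathbf{1}_{\{S^\delta([0,t])\subset\Omega^-_\eta\}}.
\]
Applying the continuous mapping theorem to each side (using the continuity property of $F$ for the domains $\Omega^\pm_\eta$), and then letting $\eta\downarrow 0$, both outer bounds converge to $\mathbb{P}_{x\to y;t}(\tau_{\partial\Omega}>t)=P_{x,y}(t)$ by monotone convergence, using that the bridge a.s.\ makes no tangential contact with $\partial\Omega$. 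The hard part is the a.s.\ continuity of $F$ under the bridge law; once that is in hand, the sandwiching and the $\eta\to0$ limit are routine.
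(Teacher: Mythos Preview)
Your argument is correct and is exactly the natural way to make this corollary rigorous. The paper itself gives no proof at all --- it simply states the result as an immediate corollary of Lemma~\ref{bridge convergence} --- so you have supplied the details the authors omitted. The structure (Skorokhod coupling, continuous mapping, $\eta$-sandwiching to absorb the $O(\delta)$ boundary mismatch) is standard and sound; note also that Portmanteau applied to the open event $\{w([0,t])\subset U\}$ and the closed event $\{w([0,t])\subset K\}$ lets you avoid checking a.s.\ continuity of the indicator for the auxiliary domains $\Omega^\pm_\eta$, leaving only the ``no tangential contact'' claim $\mathbb{P}_{x\to y;t}(b\subset\bar\Omega)=\mathbb{P}_{x\to y;t}(b\subset\Omega)$ to justify at the end, which is indeed the only non-routine point.
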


Suppose $x,y$ are fixed. When $t$ is small the Brownian bridge of duration $t$ is close to a straight line segment $[x,y]$. If the latter is contained in $\Omega$ then it is very likely that the bridge did not leave $\Omega$ by time $t$. This can be made rigorous through the following lemma.

\begin{lem}\label{lem:Pzero}
Let $P_{x,y}(t)$ be as above.
Assume that the line between $x$ and $y$ is in $\Omega$.
Then:
\[
\lim_{t\rightarrow 0}P_{x,y}(t)=1.
\]
Furthermore, $P_{x,y}$ is a continuous function of $t>0$.
\end{lem}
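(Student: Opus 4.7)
I would work throughout with the scaled representation of the bridge: if $\beta$ denotes a standard Brownian bridge from $0$ to $0$ of duration $1$, then
\[
b_s \;=\; x + \tfrac{s}{t}(y-x) + \sqrt{t}\,\beta_{s/t}, \qquad s\in[0,t],
\]
has the law $\mathbb{P}_{x\to y;t}$, and the various bridges for different $t$ can be coupled via a single realisation of $\beta$.

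For the first claim, observe that the segment $[x,y]$ is a compact subset of the open set $\Omega$, so $d := \dist([x,y], \partial\Omega) > 0$. Every point of the bridge $b$ lies within distance $\sqrt{t}\,\sup_{u\in[0,1]}|\beta_u|$ of the segment $[x,y]$. Hence
\[
P_{x,y}(t) \;\ge\; \mathbb{P}\!\left(\sqrt{t}\,\sup_{u\in[0,1]}|\beta_u| < d\right),
\]
and since $\sup_u|\beta_u|$ is almost surely finite (with Gaussian tails), the right-hand side tends to $1$ as $t\to 0$. Combined with $P_{x,y}(t)\le 1$ this yields $\lim_{t\to 0}P_{x,y}(t)=1$.

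For continuity on $(0,\infty)$, I would use the identity
\[
P_{x,y}(t) \;=\; \frac{q^{\Omega}_t(x,y)}{p_t(x,y)},
\]
where $p_t(x,y)=(2\pi t)^{-1}e^{-|x-y|^2/(2t)}$ is the free heat kernel in $\mathbb{R}^2$ and $q^{\Omega}_t(x,y)$ is the Dirichlet heat kernel on $\Omega$; this just restates the definition of $P_{x,y}$ after conditioning Brownian motion on its endpoint. The denominator is manifestly smooth in $t>0$, while the numerator is smooth in $t>0$ for interior points $x,y\in\Omega$ by standard parabolic interior regularity (equivalently, via the spectral expansion $q^{\Omega}_t(x,y)=\sum_n e^{-\lambda_n t}\phi_n(x)\phi_n(y)$ of the Dirichlet Laplacian on the bounded domain $\Omega$, which converges uniformly on compact subsets of $\Omega\times\Omega\times(0,\infty)$).

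As an alternative probabilistic route to continuity, one can use the coupling $\tilde b^t_u := x+u(y-x)+\sqrt{t}\,\beta_u$, $u\in[0,1]$, which converges uniformly to $\tilde b^{t_0}$ as $t\to t_0$. The functional $\gamma\mapsto \mathbf{1}\{\gamma([0,1])\subset\Omega\}$ is continuous at every path that either stays at positive distance from $\partial\Omega$ or visits the interior of $\Omega^c$; the complementary set of paths (those touching $\partial\Omega$ without ever crossing it) has bridge-probability zero, using that the bridge $\tilde b^{t_0}$ is absolutely continuous with respect to Brownian motion on any $[0,1-\varepsilon]$ and that every boundary point of a simply connected bounded planar $\Omega$ is regular for Brownian motion. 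Bounded convergence then gives $P_{x,y}(t)\to P_{x,y}(t_0)$. The only subtle point in either argument is the parabolic regularity / regularity-of-boundary-points input; the heat-kernel route sidesteps it most cleanly.
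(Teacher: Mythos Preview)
Your argument for $\lim_{t\to 0}P_{x,y}(t)=1$ is essentially the paper's: both rescale the bridge to $[0,1]$ so that it becomes the segment $[x,y]$ plus a fluctuation of size $\sqrt{t}$ times a fixed Brownian bridge, and then use that $[x,y]$ lies at positive distance from $\partial\Omega$. Your formulation via $\sqrt{t}\sup_u|\beta_u|$ is slightly cleaner but identical in substance.

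Where you go beyond the paper is on the continuity claim: the paper's proof addresses only the $t\to 0$ limit and simply asserts continuity. Your heat-kernel identity $P_{x,y}(t)=q^{\Omega}_t(x,y)/p_t(x,y)$ together with interior parabolic regularity of $q^{\Omega}$ on a bounded domain is a clean and correct way to supply this. The alternative coupling argument also works; the one delicate point you flag---that the bridge a.s.\ does not touch $\partial\Omega$ without crossing---is justified here because every boundary point of a bounded simply connected planar domain is regular for Brownian motion, so the heat-kernel route is indeed the more economical choice.
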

\begin{proof}
Let $(b_s)_{s\in[0,t]}$ be the Brownian bridge from $x$ to $y$ of duration $t$.
A well-known representation of the Brownian bridge is $b_s=x+(y-x)\frac{s}{t}+W_s-\frac{s}{t}W_t$, where $(W_s)_{s\in[0,t]}$ is a standard two dimensional Brownian motion started at $0$.
By rescaling the time to the interval $[0,1]$ we get $\hat b_t=b_{tc}$ for $t$ in $[0,1]$, which satisfies:
\[
\hat b_s=x+(y-x)s+W_{st}-sW_t.
\]
As $t\rightarrow 0$ the second term $W_{st}-sW_t$ converges to $0$ in probability uniformly in $s$.
Since $\Omega$ is an open set and hence also contains an open set around the line from $x$ to $y$ this implies that $P_{x,y}(t)$ converges to $1$.
Continuity in $t>0$ easily follows from the fact that as $t \to t_0>0$, the law of a Brownian bridge of duration $t$ (suitably extended or restricted) converges weakly for the uniform topology to that of a Brownian bridge of duration $t_0$, the portmanteau theorem, and the fact that a Brownian bridge of duration $t_0$ has probability zero to hit the boundary without leaving the domain $\Omega$.
\end{proof}

It is also useful to recall the following elementary estimate which can be obtained e.g. by Stirling's approximation (or from computing the Fourier transform):
\begin{lem}\label{L:HK2d}
	Let $x^\delta$ and $y^\delta\in\delta\mathbb T$ be sequences of lattice points.
	Then there exists a constant $C<
	\infty$ independent of $x^\delta,y^\delta,\delta$ and $n$ such that
	\begin{equation}
	\P_{x^\delta} (X_{n} = y^\delta) \leq \frac{C}n
	\end{equation}
	for some universal constant $C>0$.
\end{lem}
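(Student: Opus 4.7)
The plan is to exploit an algebraic feature specific to the directed triangular lattice: the number of steps a path takes in each of the three primitive directions is determined by its length and endpoint alone. This collapses the usual sum over trajectories to a single multinomial term that can be bounded by Stirling's formula.

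More concretely, let $n_k$ denote the number of steps the walk takes in direction $\tau^{k-1}$ along a lattice path of length $n$ from $x^\delta$ to $y^\delta$. The counts $(n_1,n_2,n_3)$ must satisfy
\[
n_1 + n_2 + n_3 = n, \qquad n_1\cdot 1 + n_2\cdot\tau + n_3\cdot\tau^2 = (y^\delta - x^\delta)/\delta.
\]
Splitting the complex equation into its real and imaginary parts gives a $3\times 3$ real linear system whose coefficient matrix has nonzero determinant (since $1,\tau,\tau^2$ are pairwise distinct), so there is at most one solution. Since every such path has probability $3^{-n}$, we conclude
\[
\P_{x^\delta}(X_n = y^\delta) = 3^{-n}\binom{n}{n_1,n_2,n_3}
\]
when this unique solution lies in $\mathbb{Z}_{\ge 0}^3$, and is zero otherwise.

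To conclude, I would bound this single trinomial term uniformly. The quantity $3^{-n}\binom{n}{n_1,n_2,n_3}$ is the probability that $n$ i.i.d.\ trials with three equally likely outcomes produce the counts $(n_1,n_2,n_3)$; it is maximised (over integer triples summing to $n$) at $n_i$ closest to $n/3$, with maximum value $\sim 3^{3/2}/(2\pi n)$ by Stirling's approximation. Combined with the trivial bound $\P\le 1$ for $n$ below any fixed threshold, this yields $\P_{x^\delta}(X_n=y^\delta)\le C/n$ with $C$ independent of $x^\delta$, $y^\delta$, $\delta$ and $n$.

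The only substantive step is the uniqueness observation for $(n_1,n_2,n_3)$; once that is in hand, Stirling does the rest and there is no real obstacle. It is worth noting that this shortcut is specific to the \emph{directed} triangular walk: for the undirected walk the same $O(1/n)$ bound holds but requires a local central limit theorem or a direct Fourier computation, since the three step counts are no longer algebraically determined by the endpoint.
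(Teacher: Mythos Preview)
Your argument is correct and is exactly the Stirling route the paper has in mind (the paper does not spell out a proof, merely noting that the bound follows from Stirling's approximation or a Fourier computation; the surrounding Lemmas~\ref{L:comparison} and~\ref{L:multi} make the same use of the fact that on the directed lattice the triple $(n_1,n_2,n_3)$ is determined by $n$ and the endpoint). One tiny quibble: the reason the $3\times3$ system is nonsingular is that $1,\tau,\tau^2$ are \emph{affinely independent} (not collinear), which of course holds here since three distinct points on a circle cannot lie on a line.
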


\begin{lem}\label{L:multi}
Let $x,y\in\mathbb C$ and $x^\delta,y^\delta=x^\delta + a\delta + b\delta\tau\in\Gtree$ such that $x^\delta\to x$ and $y^\delta\to y$ and that $n-a-b$ is divisible by $3$.
Then
\begin{equation}
3^{-n}\binom{n}{n-a-b,n-a+2b,n+2a-b}=\frac{\sqrt{27}}{2\pi n}\exp(-\frac{|x-y|^2}{\delta^2n})(1+O(\delta)).
\end{equation}
where the error is uniform in $\delta$ small enough $,x^\delta,y^\delta\in\Gtree$ and $n$ such that $|x-y|^2\delta^{-2}<n<M\delta^{-2}$ for some constant $M$.
\end{lem}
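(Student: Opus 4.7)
My plan is a direct application of Stirling's formula, producing the local central limit theorem for the (non-massive) random walk on $\delta\mathbb{T}$.

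First I would identify the step multiplicities. A path from $x^\delta$ to $y^\delta = x^\delta + a\delta + b\delta\tau$ of length $n$ consists of $n_1$ steps in direction $1$, $n_2$ in $\tau$, and $n_3$ in $\tau^2$, subject to $n_1 + n_2 + n_3 = n$ and $n_1 + n_2\tau + n_3\tau^2 = a + b\tau$. Using $\tau^2 = -1 - \tau$, this linear system has the unique solution $n_1 = (n+2a-b)/3$, $n_2 = (n-a+2b)/3$, $n_3 = (n-a-b)/3$, each a non-negative integer under the divisibility hypothesis and the lower bound $n > |y-x|^2\delta^{-2}$ (which controls $|a|,|b|$ by constants). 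The multinomial in the statement is then the number of paths of length $n$ from $x^\delta$ to $y^\delta$, so that the probability $\mathbb P_{x^\delta}(X_n = y^\delta)$ differs from it by the factor $3^{-n}$.

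Second, I would apply Stirling's formula $k! = \sqrt{2\pi k}(k/e)^k (1 + O(1/k))$ to $n!$ and each $n_i!$, writing $n_i = n/3 + \eps_i$ so that $\sum_i \eps_i = 0$. The polynomial prefactor becomes $(2\pi)^{-1}\sqrt{n/(n_1 n_2 n_3)} = \sqrt{27}/(2\pi n) \cdot (1 + O(\delta))$, since each $\eps_i/n = O(\delta)$. The exponential factor $\prod_i (n/n_i)^{n_i}$ is handled by taking logarithms and Taylor expanding $\log(1 + 3\eps_i/n)$: the constraint $\sum \eps_i = 0$ kills the linear term, while a short computation with $\eps_1 = (2a-b)/3$, $\eps_2 = (-a+2b)/3$, $\eps_3 = -(a+b)/3$ gives $\sum_i \eps_i^2 = \tfrac{2}{3}(a^2-ab+b^2) = \tfrac{2}{3}|y-x|^2/\delta^2$, so the quadratic term produces exactly $-|y-x|^2/(\delta^2 n)$. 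The $n\log 3$ contribution pairs with the normalisation $3^{-n}$ from the per-step probabilities to yield the stated estimate.

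The only delicate point is that all error terms be genuinely uniform over the allowed range. The cubic Taylor remainder contributes $O\bigl(\sum_i|\eps_i|^3/n^2\bigr) = O(|y-x|^3/(\delta^3 n^2))$, which is $O(\delta)$ because the upper bound $n < M\delta^{-2}$ forces $|y-x|\leq\sqrt{M}$ to stay bounded; the Stirling $O(1/n_i)$ corrections and the prefactor $O(\eps_i/n)$ corrections are similarly absorbed into $O(\delta)$. Combining these yields the claimed $(1+O(\delta))$ estimate uniformly in the prescribed range of $(x^\delta, y^\delta, \delta, n)$.
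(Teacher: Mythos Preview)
Your proposal is correct and follows essentially the same route as the paper's own proof: apply Stirling's formula to all four factorials, then Taylor expand the logarithm to second order, using $\sum_i \eps_i = 0$ to kill the linear term and the identity $a^2 - ab + b^2 = |y-x|^2/\delta^2$ to identify the Gaussian exponent; you also correctly observe that the stated formula only makes sense after pairing with the $3^{-n}$ normalisation. One small slip in your error analysis: the cubic remainder $O\bigl(|y-x|^3/(\delta^3 n^2)\bigr)$ is controlled not by the \emph{upper} bound on $|y-x|$ that you cite, but by the \emph{lower} bound $n \ge |y-x|^2\delta^{-2}$, which gives $O(\delta/|y-x|)$; this is indeed $O(\delta)$ in context since $x \neq y$ are fixed limit points, so $|y-x|$ is bounded below.
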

\begin{proof}

Since $y^\delta-x^\delta\to y-x$ we have that $a$ and $b$ are of order $\delta^{-1}$.
Because the domains are bounded they are uniformly of this order.
Therefore all entries in the multinomial coefficient are uniformly of order $\delta^{-2}$ and we can apply Stirling's approximation to all appearing factorials to obtain that the multinomial coefficient equals:
\begin{align*}
	&\frac{n^n\sqrt{2\pi n}} {(\frac{n-a-b}{3})^{\frac{n-a-b}{3}}(\frac{n+2a-b}{3})^{\frac{n+2a-b}{3}}(\frac{n-a+2b}{3})^{\frac{n-a+2b}{3}}(\sqrt{2\pi n/3})^33^n}(1+O(\delta^2)) \\
	=& \frac{\sqrt{27}}{2\pi n}
	\left((1+\frac{-a-b}{n})(1+\frac{2a-b}{n})(1+\frac{-a+2b}{n})\right)^{-\frac{n}{3}}\times\\
	&(1+\frac{-a-b}{n})^{-\frac{-a-b}{3}}(1+\frac{2a-b}{n})^{-\frac{2a-b}{3}}(1+\frac{-a+2b}{n})^{-\frac{-a+2b}{3}}(1+O(\delta^2))\\
	=&\frac{\sqrt{27}}{2\pi n}
	\left(1+\frac{-3(a^2-ab+b^2)}{n^2}+O(\delta^3)\right)^{-\frac{n}{3}}\times\\
	&(1+\frac{-a-b}{n})^{-\frac{-a-b}{3}}(1+\frac{2a-b}{n})^{-\frac{2a-b}{3}}(1+\frac{-a+2b}{n})^{-\frac{-a+2b}{3}}(1+O(\delta^2))\\
	=&\frac{\sqrt{27}}{2\pi n}
	\exp(\frac{a^2-ab+b^2}{n})\exp(-\frac{(-a-b)^2+(2a-b)^2+(2b-a)^2}{3n})(1+O(\delta))\\
	=&\frac{\sqrt{27}}{2\pi n}\exp(-\frac{|x-y|^2}{\delta^2n})(1+O(\delta)).
\end{align*}
In the last step we used that $\delta^2(a^2-ab+b^2)=|a\delta+b\delta\tau|^2=|x-y|^2+O(\delta)$.
\end{proof}

\begin{lem}\label{L:expexit}
Let $\Gtree\subset\delta\mathbb T$ be a sequence of lattice domains satisfying $\Gtree\subset B(0,R)$ for some $R>0$ independent of $\delta$.
Let $x^\delta$ and $y^\delta\in\Gtree T$ be a sequences of lattice points.
Then there exists a constants $c>0$ depending on $R$, but not on $\delta, n, x^\delta$ or $y^\delta$ such that for all $n\geq 1$:
\begin{equation}
\mathbb{P}^{(0)}_{x^\delta}(\sigma_{\Gtree}>n|X_n^\delta=y^\delta)<\exp ( - c n \delta^2).
\end{equation}
\end{lem}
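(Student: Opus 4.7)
My plan is to bound the conditional probability by writing it as the ratio $p_n^{\Gtree}(x,y)/p_n(x,y)$ of the Dirichlet heat kernel to the full heat kernel, and then showing that the numerator decays exponentially in $n\delta^2$ while the denominator decays only polynomially in $n$; the ratio then decays exponentially, as required.

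First I would establish exponential decay of the survival probability $\phi_n(x):=\mathbb{P}^{(0)}_x(\tau_\Gtree>n)$. By Donsker's invariance principle applied on the time scale $K\delta^{-2}$, for $K$ large enough depending on $R$, the rescaled walk started anywhere in $\Gtree\subset B(0,R)$ exits $B(0,R)$ (and hence $\Gtree$) by time $K\delta^{-2}$ with probability uniformly at least $\eta>0$, once $\delta$ is small enough. Iterating this via the strong Markov property at times $kK\delta^{-2}$ gives $\phi_n(x)\le C_0\exp(-c_0 n\delta^2)$. Running exactly the same argument for the time-reversed walk (still a simple random walk on the triangular lattice, now in directions $-1,-\tau,-\tau^2$, and still converging to a standard Brownian motion), and noting the identity $\sum_{z\in\Gtree}p_n^\Gtree(z,y)=\hat{\mathbb{P}}_y(\hat\tau_\Gtree>n)$, yields the dual estimate $\sum_{z\in\Gtree}p_n^\Gtree(z,y)\le C_0'\exp(-c_0 n\delta^2)$.

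Next I would upper-bound the Dirichlet heat kernel by splitting at the midpoint and combining the above dual bound with the pointwise estimate of Lemma \ref{L:HK2d}: for $n=2m$,
$$
p_n^\Gtree(x,y) \;=\; \sum_{z\in\Gtree} p_m^\Gtree(x,z)\,p_m^\Gtree(z,y) \;\le\; \sup_z p_m(x,z)\cdot \sum_{z\in\Gtree}p_m^\Gtree(z,y) \;\le\; \frac{C}{m}e^{-c_0 m\delta^2} \;\le\; \frac{C'}{n}e^{-c_1 n\delta^2}.
$$
For the denominator, Lemma \ref{L:multi} gives directly $p_n(x,y)\ge c''/n$ whenever $|x-y|^2\delta^{-2}<n<M\delta^{-2}$; for larger $n$, an elementary Chapman--Kolmogorov iteration $p_n = p_m*p_{n-m}$ propagates this lower bound to all $n\ge C_2 R^2\delta^{-2}$. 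Dividing, the ratio is bounded by $C'''\exp(-c_1 n\delta^2)$ for $n\ge n_0\delta^{-2}$; absorbing the constant into the exponential at the cost of a slightly smaller $c>0$ yields the stated bound for $n$ large enough, and for smaller $n\delta^2$ the right-hand side of the lemma is bounded below by a positive constant, so the inequality follows trivially after adjusting $c$.

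The main obstacle is the uniform-in-starting-point exit bound of the first step in the directed (non-reversible) setting, but it is handled by the observation that $\Gtree\subset B(0,R)$ is uniformly bounded and Brownian motion starting anywhere in $B(0,R)$ exits that ball within time $K$ with a probability depending only on $R$ and $K$. The lack of reversibility is not a genuine difficulty: the time-reversed walk is itself a simple random walk on the directed triangular lattice with the same scaling limit (a standard Brownian motion), so exactly the same Donsker-plus-Markov argument produces the dual survival estimate needed in the midpoint split.
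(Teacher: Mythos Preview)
Your approach is correct and coincides with the paper's sketch: writing the conditional probability as the ratio $p_n^{\Gtree}(x,y)/p_n(x,y)$ and performing a midpoint split is exactly what the paper's ``bounded Radon--Nikodym derivative of the bridge on $[0,n/2]$, plus the straightforward exponential decay of the unconditional exit time'' unpacks to; both routes rest on the same two ingredients, the pointwise upper bound $p_m\le C/m$ of Lemma~\ref{L:HK2d} and a lower bound $p_n(x,y)\ge c/n$ in the relevant range.

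Two small remarks. First, the detour through the time-reversed walk is unnecessary: in the midpoint split you may instead bound $\sup_z p_m^{\Gtree}(z,y)\le C/m$ and sum $\sum_z p_m^{\Gtree}(x,z)=\P_x(\tau_{\Gtree}>m)$, which uses only the forward walk and is closer to the paper's Radon--Nikodym formulation. Second, your sentence ``for smaller $n\delta^2$ the inequality follows trivially after adjusting $c$'' is not correct: when $n=1$ and $x^\delta,y^\delta$ are interior neighbours the conditional probability equals~$1$, so the strict inequality cannot hold for any $c>0$. This is a defect of the lemma's statement rather than of your method; in the paper the lemma is only applied for $n\ge M\delta^{-2}$, and the intended formulation carries a multiplicative constant on the right-hand side (or restricts to that range).
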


\begin{proof} This can easily be deduced from the fact that the Radon--Nikodym derivative of the conditioned random walk compared to an unconditional random walk, restricted to $[0, n/2]$, is bounded (see, e.g., \eqref{E:RN}), and the analogous (and straightforward) bound for unconditional random walk.
\end{proof}

Now we state the main result of this section:

\begin{prop}\label{prop: green convergence}
Let $\Omega\subset\mathbb{C}$ be a bounded simply connected domain and $x,y\in\Omega$ be two distinct points of $\Omega$.
Assume that discrete domains $\Gtree\subset\delta\mathbb{T}$ approximate $\Omega$.
Then for all $x^\delta\to x, y^\delta\to y$
\[
Z^{(m)}_{\Gtree}(x^\delta,y^\delta)\rightarrow \sqrt{3}G^{(m)}_\Omega(x,y).
\]
\end{prop}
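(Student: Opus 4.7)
\textbf{Proof plan for Proposition~\ref{prop: green convergence}.} The strategy is to fix a sequence of lattice points $y^\delta\to y$ in $\Omega$ and study the function $g_\delta(v):=Z^{(m)}_\Gtree(v,y^\delta)$ in the first variable. We aim to show that $(g_\delta)$ is precompact in $C_{\rm loc}(\Omega\setminus\{y\})$ and that any subsequential limit $g$ satisfies the three hypotheses of Lemma~\ref{lem:Green uniqueness} for $\tfrac{\sqrt{3}}{2}G^{(m)}_\Omega(\cdot,y)$, namely Dirichlet boundary conditions, the massive Helmholtz equation away from $y$, and the log singularity at $y$ with coefficient $\tfrac{\sqrt{3}}{2}\cdot\tfrac{1}{2\pi}$. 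Uniqueness identifies the limit; since every subsequential limit is the same, the full sequence converges, and evaluating at $v=x^\delta$ gives the proposition.

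\emph{Precompactness.} The strong Markov property at the first step shows $g_\delta$ is discrete massive harmonic in $v$ for $v\neq y^\delta$, namely $g_\delta(v)=\tfrac{1-m^2\delta^2}{3}\sum_{w\sim v,\,w\in\Gtree}g_\delta(w)$. Proposition~\ref{P:abscont} bounds $g_\delta$ from above by a constant times the non-massive Green function $Z^{(0)}_\Gtree(\cdot,y^\delta)$, and comparison with the full-plane $Z^{(m)}_{\delta\mathbb{T}}(\cdot,y^\delta)$ (controlled via Lemma~\ref{L:multi}) gives a uniform bound of order $\log(1/|v-y|)$ on compacts of $\Omega\setminus\{y\}$. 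Combined with the Hölder estimate of Lemma~\ref{lem:Harnack} applied at each $v$ at macroscopic distance from $y^\delta$, Arzelà--Ascoli yields precompactness in $C_{\rm loc}(\Omega\setminus\{y\})$.

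\emph{Identification of a subsequential limit.} Let $g$ be a subsequential limit. Taylor-expanding the discrete massive harmonic identity (using that the step covariance of the triangular walk equals $(\delta^2/2)I$) shows $(-\Delta+m^2)g=0$ on $\Omega\setminus\{y\}$. The boundary vanishing $g\equiv 0$ on $\partial\Omega$ follows from a Beurling/annulus crossing argument: when $v$ is close to $\partial\Gtree$ the walk leaves $\Gtree$ quickly without reaching $y^\delta$, so $g_\delta(v)\to 0$ uniformly as $v\to\partial\Omega$ (using the crossing lower bound embedded in the proof of Lemma~\ref{lem:Harnack}). For the log singularity, the strong Markov property at $\tau_\Gtree$ yields
\[
Z^{(m)}_{\delta\mathbb{T}}(v,y^\delta)-Z^{(m)}_\Gtree(v,y^\delta)=\mathbb{E}_v\bigl[(1-m^2\delta^2)^{\tau_\Gtree}Z^{(m)}_{\delta\mathbb{T}}(X_{\tau_\Gtree},y^\delta)\,1_{\tau_\Gtree<\tau^*}\bigr],
\]
which is uniformly $O(1)$ in a neighborhood of $y$ since $X_{\tau_\Gtree}\in\partial\Gtree$ stays at macroscopic distance from $y^\delta$. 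It therefore suffices to compute the singular part of the full-plane sum $Z^{(m)}_{\delta\mathbb{T}}(v,y^\delta)=\sum_n(1-m^2\delta^2)^n P_n(v,y^\delta)$. Substituting the LCLT of Lemma~\ref{L:multi}, summing over the $3$-periodically reachable times $n$ (which contributes a factor $1/3$), and changing variables to $u=n\delta^2$ gives
\[
Z^{(m)}_{\delta\mathbb{T}}(v,y^\delta)\;\longrightarrow\;\frac{\sqrt{3}}{2\pi}\int_0^\infty\frac{du}{u}\,e^{-m^2u-|v-y|^2/u}\;=\;\frac{\sqrt{3}}{\pi}K_0(2m|v-y|),
\]
which as $v\to y$ reveals a logarithmic singularity with the coefficient $\tfrac{\sqrt{3}}{2}\cdot\tfrac{1}{2\pi}$ required to match $\tfrac{\sqrt{3}}{2}G^{(m)}_\Omega$. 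Lemma~\ref{lem:Green uniqueness} then identifies $g=\tfrac{\sqrt{3}}{2}G^{(m)}_\Omega(\cdot,y)$.

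\emph{Main obstacle.} The lack of reversibility of the directed triangular walk prevents us from borrowing the two-variable symmetry used in~\cite{ChelkakWan}; we must commit to one variable (here the first) and verify that massive harmonicity passes cleanly to the continuous limit under the forward walk. The most delicate quantitative point is tracking all the lattice-dependent factors in the LCLT---area per vertex $\tfrac{\sqrt{3}}{2}\delta^2$, $3$-periodicity of reachable vertices, and the correspondence $t=n\delta^2/2$ between walk steps and Brownian time---so that the coefficient $\tfrac{\sqrt{3}}{2}$ in front of $G^{(m)}$ emerges correctly from Step~3 rather than a spurious constant.
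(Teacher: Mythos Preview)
Your approach is correct and takes a genuinely different route from the paper's. The paper works with $Z^{(m)}_\Gtree(x^\delta,\cdot)$ in the \emph{second} variable and extracts the singularity by splitting the in-domain sum $\sum_n \P(X_n=y^\delta)(1-m^2\delta^2)^n\,\P(\tau_{\partial\Omega}>n\mid X_n=y^\delta)$ into three ranges of $n$: the small-$n$ piece is bounded by a multinomial comparison (Lemma~\ref{L:comparison}), the large-$n$ piece by an exponential bridge-survival estimate (Lemma~\ref{L:expexit}), and the main range is computed via convergence of the conditioned walk to a Brownian bridge (Lemma~\ref{bridge convergence}, Corollary~\ref{cor:uniform bridge}, Lemma~\ref{lem:Pzero}). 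Your strong-Markov comparison with the full-plane massive Green function $Z^{(m)}_{\delta\mathbb{T}}$, reading off the singularity from $K_0$, sidesteps the bridge machinery entirely, which is a real simplification; the only cost is that you lose the paper's explicit intermediate integral with the domain-dependent factor $P_{x,y}(s)$, but that formula is not used downstream.

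Two small points to tighten. First, your citation of Proposition~\ref{P:abscont} is backwards: that proposition gives a \emph{lower} bound on $Z^{(m)}/Z^{(0)}$. For the uniform upper bound you actually want $Z^{(m)}_\Gtree\le Z^{(m)}_{\delta\mathbb{T}}$ directly, and boundedness of the latter on compacts away from $y$ still requires controlling the tails of $\sum_n\P(X_n=y^\delta)(1-m^2\delta^2)^n$ outside the range of Lemma~\ref{L:multi} (small $n$ by the counting argument of Lemma~\ref{L:comparison}, large $n$ by the mass alone combined with Lemma~\ref{L:HK2d}). Second, your framing of the ``main obstacle'' slightly overstates the role of irreversibility: the paper's second-variable choice works because $Z^{(m)}_\Gtree(x^\delta,\cdot)$ is massive harmonic for the \emph{reversed} walk, which equally converges to Brownian motion and satisfies the loop estimate behind Lemma~\ref{lem:Harnack}. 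Committing to the first variable is a clean choice, but not a forced one.
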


\begin{proof}

To begin we rewrite the Green function as
\begin{align}
Z^{(m)}_{\Gtree}(x^\delta,y^\delta) 
&=\sum_{n=0}^\infty  \P^{(0)}_{x_\delta}(X_n = y^\delta)\left(1-\frac{m^2\delta^2}{2}\right)^n\mathbb{P}^{(0)}_{x^\delta}(\sigma_{\partial\Omega}>n|X_n=y^\delta).
\end{align}
We split this sum into three parts:
First the sum from $n=0$ to $\lfloor|x-y|^2\delta^{-2}\rfloor$, then from $n = \lfloor|x-y|^2\delta^{-2}\rfloor +1$ to $\lfloor M\delta^{-2}\rfloor$ (where $M$ is a large constant chosen suitably later), then larger values of $n$.
We will call these sums $I,II$ and $III$ and estimate them separately.

\medskip \noindent \textbf{Bounding $I$.} To estimate the first part of the sum we compare $\P_{x^\delta}^{(0)} (X_n= y^\delta)$ with the same probability for points that are closer to $x$, as follows.
Depending on the residue of $n$ modulo $3$ a different set of vertices is reachable from $x^\delta$. Assuming that $z$ is reachable after $n$ steps and its distance to $x$ is at most half $|x-y^\delta|$, then the probability to be at $z$ after $n$ steps is bigger than the probability to be at $y^\delta$:

\begin{lem}
\label{L:comparison}
Fix $n\ge 0$. For any vertex $z$  such that $\P_{x^\delta}^{(0)} (X_n = z) >0$ and $|z-x^\delta|<\frac{1}{2}|y^\delta-x^\delta|$, it holds that
$$
\P_{x^\delta}^{(0)} (X_n = z)  \ge \P_{x^\delta}^{(0)} (X_n = y^\delta).
$$
\end{lem}

\begin{proof}
Since the number of steps $n$ is fixed this is just about comparing multinomial coefficients.
It is easy to check that for any $n$ and any $a_1,a_2,a_3$ such that $n=a_1+a_2+a_3$ and $a_1>a_2$ it holds that:
$$
\binom{n}{a_1,a_2,a_3}\leq\binom{n}{a_1-1,a_2+1,a_3}.
$$
Assume without loss of generality that $y^\delta-x^\delta=a_1+a_2\tau+a_3\tau^2$, such that $a_1+a_2+a_3=n$ and $a_1\geq a_2 \geq a_3$.
The above inequality implies that for any $z$ reachable from $y$ by repeatedly reducing one of the $a_i$ and increasing another $a_j$ subject to $a_i>a_j$ satisfies:
$\P_{x^\delta}^{(0)} (X_n = z)  \ge \P_{x^\delta}^{(0)} (X_n = y^\delta)$.
It is clear that in this way only points $z$ can be be obtained that are also reachable in $n$ steps from $x^\delta$.

\textbf{Claim} All $z$ reachable from $x^\delta$ in $n$ steps, which are in the quadrilateral descriped by the lines through $x^\delta$ in the directions $1$ and $\tau$ and through $y^\delta$ in the directions orthogonal to $1$ and $\tau$ are reachable through these operations.
See figure \ref{F:multinomialhex}.

\begin{figure}
\begin{center}
\includegraphics[width=.4\textwidth]{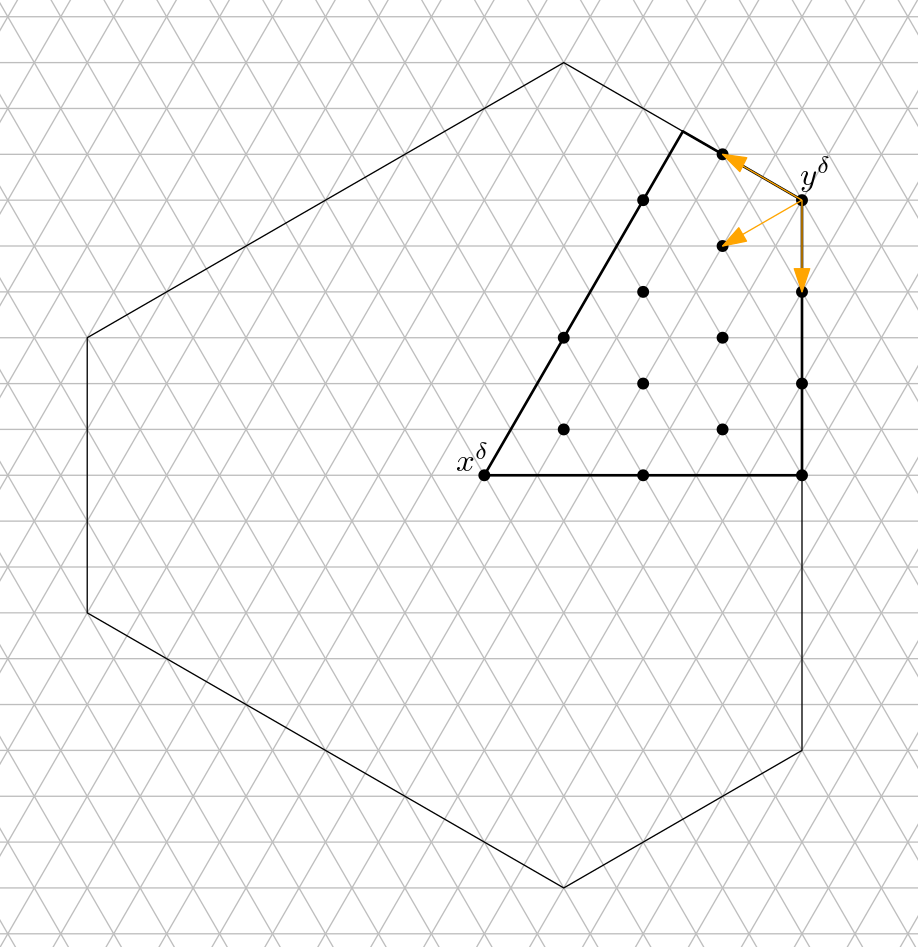}
\label{F:multinomialhex}
\end{center}
\caption{Situation of Lemma \ref{L:comparison}, the bold line marks the relevant quadrilateral, the marked points are the points reachable with $n$ steps from $x$ and the orange arrows are the three steps possible from $y^\delta$. }
\end{figure}
\textbf{Proof of the claim:}

By applying the step of reducing $a_1$ and increasing $a_2$ we see that all such points on the line through $y^\delta$ orthogonal to $1$ are reachable and the same by reducing $a_2$ and increasing $a_3$ for the line orthogonal to $\tau$.
By choosing the correct starting point on these lines any other point in the quadrilateral is reachable by applying the step of reducing $a_1$ and increasing $a_3$.
This proves the claim.

By mirroring this quadrilateral on the lines in directions $1,\tau$ and $\tau^2$ through $x^\delta$ we obtain that also all $z$ in the resulting hexagon satisfy $\P_{x^\delta}^{(0)} (X_n = z)  \ge \P_{x^\delta}^{(0)} (X_n = y^\delta)$.
The vertices of this hexagon are the reflections of $y^\delta$ along those lines.
The points on the boundary of this hexagon which are closest to $x^\delta$ are the projections of $y^\delta$ onto the lines through $x$ in directions $1$ and $\tau$ (and their respective reflections).
Since the angle between those lines is $\frac13\pi$ both of those points have distance from $x^\delta$ of at least $\frac12|x^\delta-y^\delta|$, therefore the disk of radius $\frac12|x^\delta-y^\delta|$ is contained in the hexagon.
This proves the Lemma.
Note that the extreme case of this being the largest disk that fits inside the hexagon is obtained exactly when $y^\delta-x^\delta$ is a multiple of $1, \tau$ or $\tau^2$.
\end{proof}

There are $C |x-y|^2\delta^{-2}(1 + o(\delta^{-1}))$ points verifying the conditions of Lemma \ref{L:comparison}, where $C=\frac{1}{18\sqrt{3}}\pi$.
Consequently we have:
\begin{align*}
 \lfloor|x-y|^2\delta^{-2}\rfloor &=\sum_z\sum_{n=0}^{\lfloor|x-y|^2\delta^{-2}\rfloor}\P_{x^\delta}^{(0)} (X_n = z) \\
 &\geq \sum_{z:|z-x^\delta|\leq\frac12|x^\delta-y^\delta|}\sum_{n=0}^{\lfloor|x-y|^2\delta^{-2}\rfloor}\P_{x^\delta}^{(0)} (X_n = z)\\
  &\geq \sum_{z:|z-x^\delta|\leq\frac12|x^\delta-y^\delta|}\sum_{n=0}^{\lfloor|x-y|^2\delta^{-2}\rfloor}\P_{x^\delta}^{(0)} (X_n = y)\\
& = (C + o(1)) |x-y|^2\delta^{-2} I
\end{align*}
Which implies that $I< 1/C+o(1)$ and thus $I$ is bounded uniformly in $x$ and $y$.

\medskip \noindent \textbf{Bounding $III$.}  From Lemma \ref{L:expexit}, we see that
$$
\mathbb{P}^{(0)}_{x^\delta}(\sigma_{\partial\Omega}>n|X_n=y^\delta) \le \exp ( - c n \delta^2) .
$$
By Lemma \ref{L:HK2d},
 $\P^{(0)}_{x_\delta}(X_n = y^\delta) \le C/n$.
 Hence, crudely bounding
 $(1-\frac{m^2\delta^2}{2})^n$ by 1 in the sum $III$ we get
\begin{align*}
III & \le \sum_{n \ge M \delta^{-2} } \P^{(0)}_{x_\delta}(X_n = y^\delta) \mathbb{P}^{(0)}_{x^\delta}(\sigma_{\partial\Omega}>n|X_n=y^\delta)\\
& \le \sum_{n \ge M \delta^{-2}} \frac{C}n \exp ( - cn \delta^2) \le \sum_{k \ge 1} \sum_{n = 2^k M \delta^{-2}}^{2^{k+1} M \delta^{-2} -1 } \frac{C}{ n} \exp ( - c M 2^{k}) \\
& \le \sum_{k \ge 1} C\exp ( - c M 2^k),
\end{align*}
which is bounded uniformly in $x^\delta$ and $y^\delta$.

\medskip \noindent \textbf{Estimating $II$.}   For $II$ we estimate the number of lattice paths using Stirling's formula.
Assume without loss of generality that $y^\delta=x^\delta+a+be^{2 \pi i/3} = a+ b \tau$ with $a =a^\delta,b = b^\delta\in \{0,1, \ldots\}$ (other cases are similar), then the number of paths from $x^\delta$ to $y^\delta$ is $0$ if $n-a-b$ is not divisible by $3$.
If $n-a-b$ is divisible by $3$, the number of paths is given by the multinomial coefficient: $$ \binom{n}{\frac{n-a-b}3,\frac{n+2a-b}3,\frac{n-a+2b}3}.$$ Now, in the regime $II$,
Applying Lemma \ref{L:multi} we find
\begin{align}
    \P_{x^\delta}^{(0)} (X_n = y^\delta)
    &
    = \frac{\sqrt{27}}{2\pi n} \exp ( - \frac{|x-y|^2}{ \delta^2 n}) (1+O(\delta))
\end{align}

Recall that $P_{x,y}(t)=\mathbb{P}_{x\to y; t }(\sigma_{\partial\Omega}>t)$.
By Corollary \ref{cor:uniform bridge} we get:
\[
\mathbb{P}_x(\sigma_{\partial\Omega}>n|X_n^\delta=y)=P_{x,y}(n\delta^2)(1+o_\delta(1)),
\]
where $o_\delta(1) \to 0$ when $\delta \to 0$, uniformly in $n$ such that $|x-y|^2\delta^{-2} \leq n\leq M\delta^{-2}$ .
Using this we get:
\begin{align}
    II 
    &=\frac{\sqrt{27}}{2\pi} \times \frac13 \times \sum_{n=\lfloor|x-y|^2\delta^{-2}\rfloor+1}^{\lfloor M\delta^{-2}\rfloor}\frac{1}{n} \exp ( - \frac{|x-y|^2}{ \delta^2 n}) P_{x,y}(n\delta^2)(1-\frac{m^2\delta^2}{2})^n(1+O(\delta))(1+o_\delta(1))\\
    =&\frac{\sqrt{3}}{2\pi}(1+o_\delta(1))\sum_{n=\lfloor|x-y|^2\delta^{-2}\rfloor+1}^{\lfloor c\delta^{-2}\rfloor}\frac{1}{n} \exp ( - \frac{|x-y|^2}{ \delta^2 n}) (1-\frac{m^2\delta^2}{2})^nP_{x,y}(n\delta^2),
\end{align}
where the fact $1/3$ in the first line comes from the fact that only one in three terms contribute to the sum (owing to periodicity).

This can be transformed into a Riemann sum, from which we deduce:
\begin{align}
II &
= \frac{\sqrt{3}}{2\pi}(1+o_\delta(1)) 
\int_{|x-y|^2}^M\frac{P_{x,y}(s)\exp ( - \frac{|x-y|^2}{s}) \exp(-m^2s)}{s}ds\label{IIestimate}
\end{align}
The convergence of the Riemann sum is guaranteed by the continuity of the integrand over the relevant interval.

From \eqref{IIestimate} and our bounds on $I$ and $III$ note that $Z^{(m)}_{\Gtree}(x^\delta,\cdot)$ is uniformly bounded in $\delta$ on compacts of $\Omega\setminus \{ x \}$. Using Lemma \ref{lem:Harnack} we deduce that $Z^{(m)}_{\Gtree}(x^\delta,\cdot)$ has subsequential limits in every compact of $\Omega \setminus \{ x \}$. By considering a countable number of such compacts (e.g.
$\Omega_n = \{ y \in \Omega : d(y,x) \wedge d( y, \partial \Omega ) \ge 1/n\}$)
and a standard diagonalisation argument we may assume that there are subsequential limits in all of these compact domains simultaneously, which are necessarily consistent with one another. Let $h(x,\cdot)$ denote any such limit. We aim to identify $h$ uniquely.

As we are interested in the behaviour when $y$ is close to $x$ we can assume that the straight line from $x$ to $y$ is in $\Omega$ and therefore Lemma \ref{lem:Pzero} applies and $P_{x,y}(s)$ approaches $1$ as $s$ goes to $0$.
Elementary computations give the asymptotic behaviour of this integral as $-2\log|x-y|+o(\log|x-y|)$ when $|x-y|\rightarrow 0$.

It is elementary to check that $Z^{(m)}_{\Gtree}(x^\delta,\cdot)$ is a discrete massive harmonic function in the sense of Lemma \ref{L:Dirichlet}. Since the convergence to the limit in the chosen subsequence is uniform, it is not hard to see that we can pass to the limit in the solution of the massive Dirichlet problem of Lemma \ref{L:Dirichlet}, and deduce that $h(x, \cdot)$ is massive harmonic away from $x$. Furthermore, from our estimates above it follows that
\begin{equation}
   h(x,y)=-\frac{\sqrt{3}}{\pi}\log|x-y|+o(\log|x-y|).
\end{equation}
Thus $h$ is the unique function satisfying the desired properties.
Therefore all subsequential limits are the same which proves the desired convergence of the discrete massive Green functions.
\end{proof}

\begin{rem}
The factor $\sqrt{3}$ can be explained as follows:
Just as in the discrete case, the expected time spent by Brownian motion in a disk $B$ is given by the integral of the Green's function.
The random walk considered in this section converges to Brownian motion under the scaling $X_{\lfloor 2t\delta^{-2}\rfloor}$.
Thus, the expected amount of time spent in $B$ of the discrete walk on the scaled lattice should satisfy:
\[
\tfrac12\delta^2\mathbb{E}(|\{n:X_n\in B\}|)=\tfrac{1}{2}\delta^2\sum_{y^\delta\in B\cap\delta\mathbb{T}}Z^m_{\Gtree}(x^\delta,y^\delta)\rightarrow\int_B G^m_\Omega(x,y)dy\,.
\]
This is indeed the case, since the density of points in the square lattice is $\frac{2}{\sqrt{3}}$ and thus the sum converges to the integral after cancelling the $\frac{1}{2}$ from the time change with $\frac{2}{\sqrt{3}}$ from the lattice and the $\sqrt{3}$ from the statement of Proposition~\ref{prop: green convergence}.
Therefore the factor $\sqrt{3}$ in the right hand side of Proposition \ref{prop: green convergence} is consistent with the above.
\end{rem}

\subsection{Convergence
of discrete massive Poisson kernel ratio}\label{section:continuous Martingales}

Given a domain $\Omega$, with a marked interior point $o\in\Omega$, a marked boundary point $a \in \partial \Omega$ (thought of as a prime end of $\Omega$) and an interior point $x \in \Omega$, we define the \textbf{continuous massive invariant Poisson kernel} (with constant mass) as:
\begin{equation}
    P_{\Omega}^{(m)}(x,a)\coloneqq P_{\Omega}(x,a)-m^2\int_{\Omega_t}G_{\Omega}^{(m)}(x,y)P_{\Omega}(y,a)dy.
\end{equation}
Here $P_{\Omega}(y,a) = P_{\D} (\phi_\Omega(y), \phi_\Omega(a))$, where $\phi_\Omega$ is the unique conformal isomorphism sending $\Omega$ to $\D$, $o$ to $0$, and $a$ to 1, and $P_{\D} (x, 1) =\frac{1}{2\pi}\mathrm{Re}\left(\frac{1+x}{1-x}\right)$.
Thus $P_{\Omega}(y, a)$ the (non-massive) continuous invariant Poisson kernel (note however that the discussion in this section will be superseded by the results in Section \ref{S:LERW_general}, where we also define the invariant Poisson kernel carefully).


This definition is motivated by the following crucial identity for the discrete massive Green function
This is the discrete counterpart of the \emph{resolvent identities} to which we will return in Section \ref{subsec:resolvent}, which relate massive and non-massive harmonic functions, as already observed in the work of Makarov and Smirnov \cite{MakarovSmirnov}):
\begin{lem}
\begin{equation}\label{eq:mystery}
    (1-\frac{m^2\delta^2}{2})Z^{(m)}_{\Gtree}(x^\delta,z^\delta)=Z_{\Gtree}(x^\delta,z^\delta)-
   \frac{m^2\delta^2}{2}\sum_{y^\delta\in \Int \Gtree}
    Z_{\Gtree}^{(m)}(x^\delta,y^\delta)Z_{\Gtree}(y^\delta,z^\delta),
\end{equation}
\end{lem}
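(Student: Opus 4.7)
The plan is to demystify the identity by recognising both sides as coming from the resolvent identity for the random walk transition kernel on $\Gtree$, together with the one-step equation for the Green's function. Let $P$ denote the $|\Int\Gtree|\times|\Int\Gtree|$ matrix with entries $P(v,v')=\tfrac13$ whenever $v\to v'$ is a directed edge of $\Gtree$ with $v,v'\in\Int\Gtree$, and $0$ otherwise (so that steps ``off the boundary'' are killed). Expanding the defining path sum by length, the non-massive Green's function is
\[
Z_{\Gtree}(w,z)=\sum_{n\ge 0}\sum_{\pi:\,w\to z,\,|\pi|=n}3^{-n}=\sum_{n\ge 0}P^n(w,z),
\]
so $Z_{\Gtree}=(I-P)^{-1}$; the same argument, now with a factor $(1-m^2\delta^2)$ per step, identifies $Z_{\Gtree}^{(m)}=(I-(1-m^2\delta^2)P)^{-1}$. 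The series converge because $\Gtree$ is finite (equivalently, since paths of macroscopic length in a bounded domain have exponentially small weight).

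Once these two matrix identifications are in place, the resolvent identity $A^{-1}-B^{-1}=A^{-1}(B-A)B^{-1}$ applied to $A=I-(1-m^2\delta^2)P$ and $B=I-P$ yields
\[
Z_{\Gtree}^{(m)}-Z_{\Gtree}=Z_{\Gtree}^{(m)}\bigl[(I-P)-(I-(1-m^2\delta^2)P)\bigr]Z_{\Gtree}=-m^2\delta^2\,Z_{\Gtree}^{(m)}PZ_{\Gtree}.
\]
To eliminate the factor $P$ on the right, I would invoke the one-step recursion $Z_{\Gtree}=I+PZ_{\Gtree}$, i.e.\ $PZ_{\Gtree}=Z_{\Gtree}-I$. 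This is nothing but the usual Markov decomposition at the first step: a simple random walk started at $v$ either already visits $z$ at time $0$ (the $\delta_{v,z}$ term) or it jumps to one of its three out-neighbours $v'$ and from there has the law of a fresh walk, giving $Z_{\Gtree}(v,z)=\delta_{v,z}+\sum_{v'\sim v}\tfrac13 Z_{\Gtree}(v',z)$.

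Substituting this back gives
\[
Z_{\Gtree}^{(m)}-Z_{\Gtree}=-m^2\delta^2\,Z_{\Gtree}^{(m)}(Z_{\Gtree}-I)=-m^2\delta^2\,Z_{\Gtree}^{(m)}Z_{\Gtree}+m^2\delta^2\,Z_{\Gtree}^{(m)},
\]
which rearranges to $(1-m^2\delta^2)Z_{\Gtree}^{(m)}=Z_{\Gtree}-m^2\delta^2\,Z_{\Gtree}^{(m)}Z_{\Gtree}$; reading off the $(w^\delta,z^\delta)$ entry and expanding the matrix product $Z_{\Gtree}^{(m)}Z_{\Gtree}$ as a sum over intermediate interior vertices $v^\delta$ is exactly the stated formula. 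There is no genuine obstacle in the argument: the only things to verify carefully are the bookkeeping in the definitions of $P$ and of the massive transition kernel as $(1-m^2\delta^2)P$, and the convergence of the geometric series defining the two resolvents, both of which are immediate. A parallel probabilistic derivation is also available, by decomposing $1=(1-m^2\delta^2)^n+\sum_{k=0}^{n-1}m^2\delta^2(1-m^2\delta^2)^k$ over each path of length $n$ and then applying the Markov property at time $k$ to factor the $k$-prefix (carrying the massive weight) and the remaining excursion to $z$; this route reproduces the same identity after the same rearrangement, and explains why the ``extra'' single factor of $Z^{(m)}_\Gtree(w,z)$ appears on the left-hand side.
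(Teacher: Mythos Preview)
Your proof is correct, and your resolvent approach is genuinely different from the paper's. The paper works directly at the level of paths: it computes $\sum_{v}Z^{(m)}_{\Gtree}(w,v)Z_{\Gtree}(v,z)$ by writing it as a double sum over paths $\pi:w\to z$ and over a marked time $0\le k\le\#\pi$ along each path, so that the massive weight lives on $\pi[0,k]$ and the ordinary weight on $\pi[k,\#\pi]$; summing the inner geometric series $\sum_{k=0}^{\#\pi}(1-m^2\delta^2)^k=\tfrac{1-(1-m^2\delta^2)^{\#\pi+1}}{m^2\delta^2}$ directly produces the identity. Your alternative sketch at the end (decomposing $1=(1-m^2\delta^2)^n+\sum_{k<n}m^2\delta^2(1-m^2\delta^2)^k$) is essentially this same computation viewed through the telescoping rather than the closed form.

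Your main argument instead recognises both Green functions as matrix resolvents $(I-P)^{-1}$ and $(I-(1-m^2\delta^2)P)^{-1}$ and applies the second resolvent identity together with the one-step relation $PZ_{\Gtree}=Z_{\Gtree}-I$. This is cleaner in that it makes transparent why such an identity \emph{must} hold for any pair of Green functions differing by a scalar rescaling of the step kernel, and it generalises verbatim to other lattices or to spatially varying masses (replacing $m^2\delta^2$ by a diagonal matrix). The paper's path-marking argument, by contrast, is more self-contained and avoids invoking invertibility of $I-P$; it also makes the combinatorial meaning of the extra factor $(1-m^2\delta^2)$ on the left-hand side immediately visible as the $k=\#\pi$ term in the marked sum.
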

\begin{proof}
We prove this by splitting each trajectory in the definition of $Z_\Omega$ into two parts, and summing over all possible ways to do so:
\begin{align*}
    \sum_{y^\delta\in \text{Int} \Gtree}
    Z^{(m)}_{\Gtree}(x^\delta,y^\delta)Z_{\Gtree}(y^\delta,z^\delta)
    &= \sum_{y^\delta\in \text{Int} \Gtree}\sum_{k\geq 0}\sum_{\substack{\pi:w\rightarrow z,\\ \pi_k = v}}(\frac{1}{3}(1-\frac{m^2\delta^2}{2}))^k(\frac{1}{3})^{(\#\pi)-k}\\
    &=\sum_{\pi:w\rightarrow z}(\frac{1}{3})^{\#\pi}\sum_{k=0}^{\#\pi}(1-\frac{m^2\delta^2}{2})^k\\
    &=\sum_{\pi:w\rightarrow z}(\frac{1}{3})^{\#\pi}\frac{1-(1-\frac{m^2\delta^2}{2})^{(\#\pi)+1}}{\frac{m^2\delta^2}{2}}\\
    &= \frac{Z_{\Gtree}(x^\delta,z^\delta)-(1-\frac{m^2\delta^2}{2})Z^{(m)}_{\Gtree}(x^\delta,z^\delta)}{\frac{m^2\delta^2}{2}}.
\end{align*}
Rearranging the terms gives the desired result.
\end{proof}


The importance of the invariant Poisson kernel stems from the well-known martingale observable of Lawler, Schramm and Werner \cite{Lawler2001ConformalTrees}. Namely,
let $\gamma^\delta$ be a massive LERW between $o^\delta$ in $\Gtree$ and $a^\delta\in\partial\Gtree$. We parametrise $\gamma^\delta$ from $a^\delta$ to $o^\delta$.
For a vertex $x^\delta \in \Gtree$, define the \textbf{massive martingale observable} as:
\begin{equation}\label{MMO}
    M_{n}^{(m)}(x^\delta)\coloneqq \frac{Z_{\Gtree\setminus\gamma^\delta[0,n]}^{(m)}(x^\delta,\gamma^\delta(n))}
    {Z_{\Gtree\setminus\gamma^\delta[0,n]}^{(m)}(o^\delta,\gamma^\delta(n))}.
\end{equation}
Since $\gamma^\delta(n)$ is on the boundary of $\Gtree\setminus\gamma^\delta[0,n]$, this is equal to the ratio of hitting probabilities of $\gamma^\delta(n)$ from $x^\delta$ vs. $o^\delta$. Proceeding exactly as in \cite[Remark 3.6]{Lawler2001ConformalTrees}, one can check that for every $\delta>0$ and every fixed vertex $x^\delta$, the sequence $(M_n^{(m)}(x^\delta))_{0 \le n \le \sigma}$ gives a martingale (see also \cite[Lemma 7.2.1]{Lawler2013Loop-ErasedWalk}).


The strategy of the proof of convergence of this martingale observable to its continuum limit in Chelkak and Wan \cite{ChelkakWan} is to:
\begin{itemize}

\item first, prove the convergence of the non-massive martingale observable in the non-massive case
(something which was in fact already proved in the radial case by Lawler, Schramm and Werner \cite{Lawler2001ConformalTrees} and generalised by Yadin and Yehudayoff \cite{YadinYehudayoffLERWconvergence}, but in the chordal context of \cite{ChelkakWan} requires some additional justifications); this was proved in Proposition 3.5 and Corollary 3.6 in \cite{ChelkakWan} (and put in the correct chordal framework in Proposition 3.14)

\item second, prove that the ratio of massive Green function to non-massive Green function converges to its continuum limit, which is Proposition 3.15 in \cite{ChelkakWan}.

\end{itemize}

The first step follows directly from the work of Yadin and Yehudayoff \cite{YadinYehudayoffLERWconvergence}, which holds for arbitrary planar graphs subject to convergence of random walk to Brownian motion, which holds on the directed triangular lattice. Therefore only the second step needs to be justified, this is the content of the next lemma (which is the analogue of Proposition 3.15 in \cite{ChelkakWan}).

\begin{lem}\label{PoissonYehudahof}
In the setup above for any $x\in \Omega_t$ and $x^\delta\rightarrow x$ as $\delta\rightarrow 0$, one has:
\[
\frac {Z^{(m)}_{\Gtree_t}(x^\delta,a_t^\delta)}{Z_{\Gtree_t}(x^\delta,a_t^\delta)}\rightarrow \frac{P^{(m)}_{\Omega_t}(x,a_t)}{P_{\Omega_t}(x,a_t)}=
1-m^2\int_{\Omega_t}\frac{P_{\Omega_t}(y,a_t)}{P_{\Omega_t}(x,a_t)}G_{\Omega_t}^{(m)}(x,y)dy.
\]
\end{lem}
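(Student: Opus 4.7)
The plan is to divide the key identity \eqref{eq:mystery} (applied with $w^\delta$ replaced by $z^\delta$ and the second argument replaced by $a_t^\delta$) by $Z_{\Gtree_t}(z^\delta,a_t^\delta)$ on both sides, obtaining the rearrangement
\begin{equation*}
(1-m^2\delta^2)\frac{Z^{(m)}_{\Gtree_t}(z^\delta,a_t^\delta)}{Z_{\Gtree_t}(z^\delta,a_t^\delta)}
= 1 - m^2\delta^2\sum_{v^\delta\in\Int\Gtree_t} Z^{(m)}_{\Gtree_t}(z^\delta,v^\delta)\,\frac{Z_{\Gtree_t}(v^\delta,a_t^\delta)}{Z_{\Gtree_t}(z^\delta,a_t^\delta)}.
\end{equation*}
The prefactor $(1-m^2\delta^2)$ tends to $1$, so it suffices to identify the limit of the right-hand side. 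On each compact subset of $\Omega_t\setminus\{z,a_t\}$, I would combine two ingredients: first, Proposition \ref{prop: green convergence} for the massive Green function, giving $Z^{(m)}_{\Gtree_t}(z^\delta,v^\delta)\to \tfrac{\sqrt 3}{2} G^{(m)}_{\Omega_t}(z,v)$; and second, the convergence of the discrete non-massive Poisson kernel ratio to its continuum counterpart, $Z_{\Gtree_t}(v^\delta,a_t^\delta)/Z_{\Gtree_t}(z^\delta,a_t^\delta)\to P_{\Omega_t}(v,a_t)/P_{\Omega_t}(z,a_t)$, which follows from the Yadin--Yehudayoff framework in \cite{YadinYehudayoffLERWconvergence} applied to the directed triangular walk (whose convergence to Brownian motion is the only hypothesis needed). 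Combining these, and using the fact that the density of vertices of $\delta\mathbb{T}$ per unit area equals $\tfrac{2}{\sqrt 3}\delta^{-2}$, the sum on the right formally becomes the Riemann approximation
\begin{equation*}
m^2\delta^2\sum_{v^\delta}(\,\cdot\,)\;\longrightarrow\; m^2\cdot \tfrac{\sqrt 3}{2}\cdot\tfrac{2}{\sqrt 3}\int_{\Omega_t} G^{(m)}_{\Omega_t}(z,w)\,\frac{P_{\Omega_t}(w,a_t)}{P_{\Omega_t}(z,a_t)}\,dA(w),
\end{equation*}
in which the two normalising constants cancel and yield exactly the claimed limit.

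The main obstacle is justifying the Riemann sum convergence uniformly, since the integrand has singularities at both endpoints of integration: a logarithmic divergence of $G^{(m)}_{\Omega_t}(z,\cdot)$ near $z$, and a potential blow-up of the Poisson kernel ratio $P_{\Omega_t}(\cdot,a_t)/P_{\Omega_t}(z,a_t)$ near $a_t$. To handle the singularity at $z$, I would fix $\eta>0$, remove a ball $B(z,\eta)$ from the sum, and bound the contribution from $B(z,\eta)$ using the uniform upper bound $Z^{(m)}_{\Gtree_t}(z^\delta,v^\delta)\le C(1+|\log|z^\delta-v^\delta||)$ (a consequence of Proposition \ref{P:abscont} combined with the analogous classical bound), which gives an $O(\eta^2\log(1/\eta))$ contribution, vanishing as $\eta\to 0$. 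To handle the singularity near $a_t$, I would similarly excise a neighbourhood of the boundary point and rely on the Harnack-type/boundary estimate for $P_{\Omega_t}(w,a_t)$ to show that its integral against the bounded massive Green function is finite and locally controllable; together with the uniform bound on the ratio $Z^{(m)}_{\Gtree_t}/Z_{\Gtree_t}$ from Proposition \ref{P:abscont} this takes care of the corresponding lattice sum.

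On the compact middle region $\Omega_t\setminus (B(z,\eta)\cup B(a_t,\eta))$, convergence of the discrete integrands is uniform (by Proposition \ref{prop: green convergence} together with the standard equicontinuity argument used to obtain subsequential limits of the non-massive Poisson kernel ratio), so the Riemann sum converges to the desired integral. Taking $\eta\to 0$ after $\delta\to 0$ concludes the identification of the limit. Finally, the resulting identity
\begin{equation*}
\lim_{\delta\to 0}\frac{Z^{(m)}_{\Gtree_t}(z^\delta,a_t^\delta)}{Z_{\Gtree_t}(z^\delta,a_t^\delta)} = 1 - m^2\int_{\Omega_t} \frac{P_{\Omega_t}(w,a_t)}{P_{\Omega_t}(z,a_t)}G^{(m)}_{\Omega_t}(z,w)\,dA(w)
\end{equation*}
is exactly $P^{(m)}_{\Omega_t}(z,a_t)/P_{\Omega_t}(z,a_t)$ by the very definition of the continuous massive Poisson kernel given just before the lemma, which completes the proof.
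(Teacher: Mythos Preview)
Your proposal is correct and follows essentially the same approach as the paper: the paper's proof simply says that the argument of \cite{ChelkakWan} (Proposition~3.15 there) goes through verbatim on the directed triangular lattice, using the identity \eqref{eq:mystery}, the Green function convergence of Proposition~\ref{prop: green convergence}, and the Yadin--Yehudayoff Poisson kernel convergence, with the only new observation being precisely your point that the extra factor $\tfrac{\sqrt{3}}{2}$ from the Green function is exactly cancelled by the triangular lattice vertex density $\tfrac{2}{\sqrt 3}\delta^{-2}$ when turning the sum into an integral. Your treatment of the two singularities (logarithmic at $z$, Poisson kernel blow-up near $a_t$) fills in details that the paper leaves to the cited reference.
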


\begin{proof}
The proof in \cite{ChelkakWan} works also for the directed triangular lattice, as besides the convergence results of the last section it only requires the identity above, and estimates on the massive Green function, which follow from convergence to Brownian motion.
One might at first be worried as the right-hand side of \eqref{eq:mystery} is not linear in $Z$, whereas the limit of the discrete Green function for the triangular lattice is  $\sqrt{3}$ times massive Green function (see Proposition~\ref{prop: green convergence}.
However, the factor $\sqrt{3}$ combines with the $\tfrac12$ in \eqref{eq:mystery} to make the sum over the triangular lattice to converge to a Lebesgue integral, see the remark after Proposition \ref{prop: green convergence}.
\end{proof}

As a corollary we obtain the following convergence of martingale observables. Fix a subsequential limit $(\gamma_t)_{t \ge 0}$ of massive LERW on the directed triangular lattice, which a priori we know to be a simple curve (by absolute continuity with standard LERW), and parametrise it by capacity. Let $\Omega_t = \Omega \setminus \gamma([0,t])$. Let $a_t = \gamma(t)$ denote the tip of the curve at time $t$, which is on the boundary of $\Omega_t$.

\begin{cor}\label{prop: convergence of martingales}
Fix $r>0$. Suppose $x^\delta
\in B(o^\delta, r/2)$.
For $t \le \log (1/r)$, let $n_{t}$ denote the first $n$ such that the capacity of $\gamma^\delta([0,n])$ viewed from $o^\delta$ exceeds $t$ (equivalently, the conformal radius of $o^\delta$ in $\Omega \setminus \gamma([0,n])$ is less than $e^{-t}$).
\[
M^{(m)}_{n_t}(x^\delta)
\rightarrow \frac{P^{(m)}_{\Omega_t}(x,a_t)}{P^{(m)}_{\Omega_t}(o,a_t)}=:
M^{(m)}_{\Omega_t}(x),
\]
almost surely along the underlying subsequential limit $\delta\to 0$.
\end{cor}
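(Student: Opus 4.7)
The plan is to decompose the discrete observable as a telescoping product whose individual factors have already been analysed. Writing $\Gtree_t^\delta = \Gtree \setminus \gamma^\delta[0, n_t]$, one has
$$
M^{(m)}_{n_t}(v^\delta) = \frac{Z^{(m)}_{\Gtree_t^\delta}(v^\delta, a_t^\delta)}{Z_{\Gtree_t^\delta}(v^\delta, a_t^\delta)} \cdot \frac{Z_{\Gtree_t^\delta}(v^\delta, a_t^\delta)}{Z_{\Gtree_t^\delta}(b^\delta, a_t^\delta)} \cdot \frac{Z_{\Gtree_t^\delta}(b^\delta, a_t^\delta)}{Z^{(m)}_{\Gtree_t^\delta}(b^\delta, a_t^\delta)}.
$$
The two outer factors are ratios of massive to non-massive Green functions with one endpoint on the boundary, and hence converge by Lemma \ref{PoissonYehudahof} respectively to $P^{(m)}_{\Omega_t}(v, a_t)/P_{\Omega_t}(v, a_t)$ and to the reciprocal of the corresponding quantity at $b$. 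The middle factor is a ratio of non-massive discrete Poisson kernels on the slit domain, and its convergence to $P_{\Omega_t}(v, a_t)/P_{\Omega_t}(b, a_t)$ is the Yadin--Yehudayoff universality statement \cite{YadinYehudayoffLERWconvergence}, applicable since simple random walk on $\delta \mathbb{T}$ converges to Brownian motion. Multiplying the three limits gives exactly $P^{(m)}_{\Omega_t}(v, a_t)/P^{(m)}_{\Omega_t}(b, a_t)$, as required.

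To make this rigourous, first invoke Skorokhod's representation theorem so that, along the given subsequential limit, $\gamma^\delta \to \gamma$ almost surely in the topology of uniform convergence up to reparametrization. Combined with \cite[Lemma 2.1]{ChelkakWan} on the closeness of the discrete and continuous capacity parametrizations, this yields almost sure Carathéodory convergence of the slit domains $\Gtree_t^\delta$ to $\Omega_t$, with the marked prime ends $a_t^\delta \to a_t$, uniformly in $t \in [0, \log(1/r)]$. The restriction $v^\delta \in B(b^\delta, r/2)$ ensures that $v$ and $b$ remain at macroscopic distance from both the tip $a_t$ and $\partial \Omega$, so that all Poisson kernels appearing in the limit are finite and continuous in the marked domain, and the discrete analogues are controlled by Lemma \ref{lem:Harnack} and Proposition \ref{P:abscont}.

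The main obstacle is justifying that both the Yadin--Yehudayoff statement and Lemma \ref{PoissonYehudahof} apply uniformly to the random slit domain, since both are stated for deterministic sequences of domains approximating a fixed simply connected limit. This is handled by passing to the Skorokhod-coupled full-measure event on which $\Gtree_t^\delta \to \Omega_t$ Carathéodory with $a_t^\delta \to a_t$, and then applying the deterministic convergence results realization by realization. Simpleness of the limiting curve $\gamma$ (inherited from the absolute continuity with respect to SLE$_2$ established in Section \ref{section:Absolutecont}) together with the annulus-crossing estimates of Section \ref{Kemp} ensures that $\Omega_t$ is a simply connected domain in which $a_t$ is a genuine prime end, so the framework of the cited results does apply and the convergence of each of the three factors transfers to the product.
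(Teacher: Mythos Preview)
Your proof is correct and follows essentially the same approach as the paper: the paper uses the identical three-factor decomposition
\[
M^{(m)}_n(v^\delta) =
\frac{Z^{(m)}_{\Omega_n^\delta}(v^\delta,a_n^\delta)}
{Z_{\Omega_n^\delta}(v^\delta,a_n^\delta)}
\left(\frac{Z^{(m)}_{\Omega_n^\delta}(b^\delta,a_n^\delta)}
{Z_{\Omega_n^\delta}(b^\delta,a_n^\delta)}\right)^{-1}
\frac{Z_{\Omega_n^\delta}(v^\delta,a_n^\delta)}
{Z_{\Omega_n^\delta}(b^\delta,a_n^\delta)},
\]
and handles the outer factors via Lemma~\ref{PoissonYehudahof} and the middle factor via Yadin--Yehudayoff. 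Your additional remarks on Skorokhod coupling and Carath\'eodory convergence of the slit domains make explicit what the paper leaves implicit, but the argument is the same.
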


\subsection{Proof of the main statement}

We are now ready to prove convergence to massive SLE$_2$, as stated in Theorem \ref{T:massiveLERWtriangular}.

\begin{proof}[Proof of Theorem \ref{T:massiveLERWtriangular}]
As discussed in Section \ref{section:Absolutecont} the laws of the massive loop-erased random walks are tight and all subsequential limits are absolutely continuous with respect to classical SLE$_2$.
This justifies the application of Girsanov's theorem which in particular implies that the driving function $\xi_t$ is a semi-martingale under $\mathbb{P}^{(m)}$.

Moreover, the discrete martingales of \eqref{MMO} have continuous limits as shown in Proposition \ref{prop: convergence of martingales}.
Writing the martingale in the form
$$
M^{(m)}_n(x^\delta) =
\frac{Z^{(m)}_{\Omega_n^\delta}(x^\delta,a_n^\delta)}
{Z_{\Omega_n^\delta}(x^\delta,a_n^\delta)}
\left(\frac{Z^{(m)}_{\Omega_n^\delta}(o^\delta,a_n^\delta)}
{Z_{\Omega_n^\delta}(o^\delta,a_n^\delta)}\right)^{-1}
\frac{Z_{\Omega_n^\delta}(x^\delta,a_n^\delta)}
{Z_{\Omega_n^\delta}(o^\delta,a_n^\delta)}
$$
with $a_n^\delta = \gamma^\delta(n)$ and $\Omega_n = \Omega \setminus \gamma^\delta([0,n])$, we see that $M^{(m)}(x^\delta)$ is uniformly bounded:
the first term is trivially bounded by 1, the second is bounded by Proposition \ref{P:abscont} (and Koebe's one-quarter theorem), and the third one is bounded for $n \le n_t$ by (uniform) convergence to the (continuous) invariant Poisson kernel (here we use the strength of the result of Yadin and Yehudayoff  \cite{YadinYehudayoffLERWconvergence}) and conformal invariance of the latter.
Hence the limit  in Corollary \ref{prop: convergence of martingales} must also be a martingale (see Remark 2.3 and (2.14) in \cite{ChelkakWan} for the argument).

Standard It\^o calculations together with Hadamard's formula (as outlined in \cite{MakarovSmirnov} and written out in \cite{ChelkakWan}, see Section 4.3 and more specifically Lemma 4.9) for this family of martingales imply that the law of the driving function $\xi_t$ under $\mathbb{P}^{(m)}$ is uniquely determined.
In Sections \ref{subsec:resolvent} to \ref{subsec:identification} we will perform these calculations for a variable mass, one can also refer to them for the (very minor) differences between the chordal and the radial setting.
\end{proof}


\section{Convergence of massive LERW on general planar graphs}\label{S:LERW_general}

In the previous section it was proven that on a triangular and on a square lattice the loop erasure of a random walk with certain weights converges to SLE$_2$ with drift given by a re-weighting of massive SLE$_2$.
This also extended a result from \cite{ChelkakWan} which proved that the loop erasure of a massive random walk on the square lattice converges to massive SLE$_2$ as conjectured in \cite{MakarovSmirnov}.

We now want to extend this result to a general mass profile $\rho$ and to a more general planar graph.
To do this we will combine techniques from \cite{YadinYehudayoffLERWconvergence} and \cite{ChelkakWan}.
In particular we will first argue similarly to \cite{ChelkakWan} that the laws of the LERW are tight and any limit point is absolutely continuous with respect to the law of SLE$_2$.
Then we use the strategy employed by \cite{YadinYehudayoffLERWconvergence} to show that the discrete Poisson kernel ratio converges to the (continuous) invariant Poisson kernel, and finally use this convergence to identify the limiting law again as in \cite{ChelkakWan}.

We will use the following convergence of paths.
For two continuous curves $\alpha,\beta:[0,1]\to \C$, consider the norm $\inf_g\sup_{x\in[0,1]}|\alpha(x)-\beta\circ g(x)|$, where the infimum is over all continuous increasing bijections $g:[0,1]\to[0,1]$.
This is a norm on equivalence classes of continuous curves under reparametrization.
A law $\mu^\delta$ on continuous curves is said to converge weakly to a law $\mu$, if it converges weakly in the topology of this norm.

Recall that a Brownian motion with mass profile $\rho$ is a Brownian motion, which dies at rate $\rho(X_s)$ when at position $X_s$, i.e. it is a process which is absolutely continuous with respect to Brownian motion and has Radon Nikodym-derivative
\begin{equation}\label{E:massRN}
        \left.\frac{d\P_x^{(\rho)}}{d\P_x}\right\rvert_{\mathscr F_t}\,=\exp(-\int_0^t\rho(X_s)ds).
\end{equation}
Note that the total mass of $\P_x^{(\rho)}$ is less than one, so it is not a probability measure but a (finite) measure on paths.

Fix a domain $\Omega$ and two bounded, smooth functions $\rho, \tilde \rho:\Omega\to[0, \infty)$.
Let $\Gtree $ be a sequence of planar graphs embedded in the complex plane, converging to $\Omega$ in the \emph{Carathéodory} topology, with weights $w^\delta$ and a discrete mass function $\rho^\delta$ from the vertices of $\Gtree $ to $\R_+$.
Denote the partition function of the massive random walk with mass $\rho^\delta$ as $Z_{\Gtree }^{(\rho)}(x^\delta,y^\delta)$.
We need to assume the following properties of these objects:
\begin{enumerate}
    \item \label{as:timechange}
    Let $(X^\delta_t, t= 0,1, \ldots)$ be the massive random walk on $\Gtree$, started at a specified vertex $o^\delta\rightarrow o$ of $\Gtree$, with transition probabilities proportional to the directed weights $w^\delta$ and probability to die at each step given by $\rho^\delta(v)=\frac{\delta^2}{2}\widetilde{\rho}(v)+o(\delta^2)$, where the $o(\delta^{2})$ term needs to be uniform in $v$.
    The law of  $(X^\delta_{\delta^{-2} t}, t \ge 0)$ (interpolated continuously between time steps) converges weakly (in the above sense) as $\delta \to 0$, to the law (measure) $\P_o^{(\rho)}$ of a Brownian motion with mass profile $\rho$.
    \item \label{as:Absolutecontinuity} 
    Consider the non-massive random walk on with transition probabilities proportional to the directed weights $w^\delta$ in a domain $\Gtree\subset B(0,R)$ started at $o^\delta$ and conditioned to leave at $a^\delta$.
    Denote with $\sigma^\delta$ the number of steps before hitting the boundary $\partial \Gtree$. 
    There is a constant $c_0$ uniform in $R$ and $\Gtree$, such that
    \begin{equation}
        \E^{(0)}_{\Gtree,o^\delta\to a^\delta}(\sigma^\delta)\leq c_0R^2\delta^{-2}
    \end{equation}
    
    \item \label{as:crossing} The random walk satisfies a \textbf{uniform crossing} assumption:
    Let $\cR$ be the horizontal rectangle $[0,3]\times [0,1]$ and $\cR'$ be the vertical rectangle $[0,1]\times [0,3]$.
    Let $B_1 := B((1/2,1/2),1/4)$ be the \emph{starting ball} and $B_2:= B((5/2,1/2),1/4) $ be the \emph{target ball}. 
    Let $\cR_r = r \cR + z$ (resp. $\cR_r' = r \cR' + z$) for some $r>0$ and $z \in \Omega$, and suppose that $\cR_r$ (resp. $\cR_r'$) $\subset \Omega \subset R\D$. 
    Let $B_1^r , B_2^r$ be the corresponding scaled starting and target balls. 
    Let $\Cross_r$ denote the event that the walk hits $B_2^r$ before leaving the rectangle $\cR_r$ (respectively $\cR'_r$) or getting killed.
    We will say that the graphs ($\Gtree$) satisfying the uniform crossing estimate if there is a constant $c>0$ such that, uniformly over $z \in B_1^r$, uniformly over $r \le R$,
\begin{equation}\label{E:crossing_goal}
\P^{(0)}_z (\Cross_r) \ge c.
\end{equation}
\end{enumerate}

Assumption (i) is essentially an assumption about the fact that random walk converges to a Brownian motion, potentially up to a time-change (as in \cite{YadinYehudayoffLERWconvergence}). Our assumption basically requires that this time-change is not too rough, since before time change the rate of dying is $\tilde \rho(x)$ at a point $x$, whereas after this time-change the rate of dying is given by $ \rho (x)$, and both are assumed smooth and bounded.

The main theorem of this section is:

\begin{thm}\label{T:LERWgeneral_massTEXT}
Let $\Omega$ and $\rho, \tilde \rho$ be as above, and let $\Gtree$, $\rho^\delta$ and $w^\delta$ be such that the assumptions above are satisfied.
Let again $(X^\delta_t, t= 0,1, \ldots)$ be the random walk on $\Gtree$, started at a specified vertex $o^\delta\rightarrow o$ of $\Gtree$, with transition probabilities proportional to the directed weights $w^\delta$ and dying at each step with probability $\rho^\delta$.
Let $\sigma_\delta$ denote the first time at which  $X^\delta$ leaves $\Omega$ (with $\sigma_\delta = \infty$ if $X^\delta$ dies before leaving the domain) and consider the loop erasure  $\LE (X^\delta)$ of the walk up until this time. 
Then conditionally on $\sigma_\delta<\infty$ and $X^\delta_{\sigma_\delta} = a^\delta$, $\LE (X^\delta)$ converges weakly to a radial Loewner evolution $\gamma$, whose driving function $\zeta_t = e^{ i \xi_t}$ (when parametrised by capacity) satisfies the stochastic differential equation
\begin{equation}\label{eq:Lowenerdrift}
    \dd\xi_t=\sqrt{2}\dd B_t+\lambda_t\dd t, \quad \lambda_t=\frac{\partial}{\partial g_t(a_t)}\log\left(\frac{P^{(\rho)}_{\Omega_t}(o,a_t)}{P_{\Omega_t}(o,a_t)}\right),
\end{equation}
where $a_t=\gamma(t)$, $\Omega_t={\Omega}\setminus\gamma([0,t])$ is the slitted domain at time $t$, $g_t$ is the Loewner map from $\Omega_t$ to ${\DD}$ and $P^{(\rho)}_{\Omega_t}$ and $P_{\Omega_t}$ are the \textbf{invariant Poisson kernels} for the Brownian motion with mass profile $\rho$, and regular Brownian motion respectively, in $\Omega_t$.
\end{thm}

The definition of the invariant Poisson kernel $P^{(\rho)}_{\Omega_t}(o,a_t)$ appearing in the theorem is not \emph{a priori} obvious (its construction will be explained in Section \ref{S:PK}). We will obtain more explicit expressions for the drift term $\lambda_t$ in the course of the proof, which will show in particular that when the domains $\Omega_t$ are contiunous with respect to the Carath\'eodory topology (which must be the case a.s. here), $t\mapsto \lambda_t$ is itself continuous. Furthermore, we will see as a result of Lemma \ref{lem:novikov} that $\int_0^\infty \lambda_t^2\dd t  \le C$ for some constant $C>0$, thereby showing that $\lambda_t$ satisfies the Novikov condition and ensuring existence and pathwise uniqueness for solutions to the SDE  \eqref{eq:Lowenerdrift}.


\subsection{Invariant Poisson kernel for Brownian motion with mass}

\label{S:PK}

To describe the scaling limit of loop-erased random walk when the walk itself does not converge to a Brownian motion, but rather to a Brownian motion with mass, it is necessary to first define the invariant Poisson kernel of the latter, and describe a few of its properties. 

Recall that the Poisson kernel of Brownian motion in a domain $\Omega$, when it exists, is the density $h_\Omega(x, a)$ of harmonic measure in $\Omega$ viewed from $x \in \Omega$, with respect to the natural length measure $|da|$ on $\partial \Omega$, evaluated at the point $a\in \partial \Omega$. This quantity is conformally covariant (i.e., if $\phi: \Omega \to \Omega'$ is a conformal isomorphism and is analytic in a neighbourhood of the analytic domain $\Omega$, then $h_{\Omega'} (\phi(x), \phi(a)) = h_{\Omega } (x, a) |\phi'(a)|$.)  For a fixed $o\in \Omega$, the \emph{invariant} Poisson kernel is then the quantity
$$
\lambda_\Omega(x,a) = \lambda^{(0)}_\Omega(x,a)  = \frac{h_\Omega(x, a)}{h_\Omega(o, a)}.
$$
Since we took a ratio, this quantity is now conformally invariant, hence the name invariant Poisson kernel. 

This property of conformal invariance allows us to make sense of the invariant Poisson kernel even when the boundary of the domain is not smooth but $\Omega$ is simply connected: concretely, let $\Omega$ be a simply connected domain and fix $o \in \Omega$ and $a \in \partial \Omega$ (understood as a prime end or equivalently an element of the Martin boundary). Let  $\phi_\Omega $ be the unique conformal isomorphism from $\Omega$ to $\D$ sending $a$ to 1 and $o $ to $0$. Then we define $\lambda_\Omega (x,a) = \lambda_{\D} ( \phi(x),1)$. It is sometimes useful to write $\lambda_\Omega(x,a)$ as a ratio of a function of $x$ and a function of the distinguished point $o \in \Omega$ as follows: namely, if we set
\begin{equation}\label{defPK}
P_\Omega (x,a) = h_{\D} (\phi_\Omega(x),1) = \frac{1}{2\pi}\mathrm{Re}\left(\frac{1+\phi_\Omega(x)}{1-\phi_\Omega(x)}\right)=\frac{1}{2\pi}\frac{1-|\phi_\Omega(x)|^2}{|1-\phi_\Omega(x)|^2}
\end{equation}
(which implicitly also depends on the choice of $o$ and $a$) then $P_\Omega(o,a) = 1/ (2\pi)$ so one trivially has
$$
\lambda_\Omega (x, a) = \frac{P_\Omega(x,a)}{P_\Omega (o,a)}.
$$
Note that when $\Omega$ is a smooth domain, $P_\Omega (x,a)$ is proportional  (but in general not equal) to $h_\Omega(x, a)$. On the other hand, we  have
\begin{equation}
\frac{\P_x ( X_\sigma\in I) }{\P_{o} ( X_\sigma \in I)} \to \lambda_{\Omega} (x, a) = \frac{P_\Omega(x,a)}{P_\Omega (o,a)},
\label{e:PK_rough}
\end{equation}
as $I \subset \partial \Omega$ and $I \downarrow a$, where $\P_x$ is the law of Brownian motion $X$ starting from $x$, and $\sigma$ is the first time that $X$ leaves $\Omega$. To be more precise, $I$ should be thought of as a decreasing sequence of compacts in the Martin boundary of $\Omega$, whose intersection is $\{a\}$, where $a$ is also viewed as an element of the Martin boundary (or, equivalently, a prime end). Equivalently, we may parameterize $\Omega$ by the unit disc $\D$ via its Riemann mapping $\psi_\Omega = \phi_\Omega^{-1}:\D \to \Omega$; then $I = \psi_\Omega (\tilde I)$, where $\tilde I$ is an arc of $\partial \D$ shrinking to $\tilde a\in \partial \D$. When $\Omega$ has a locally connected boundary (which will a.s. hold in the cases where we apply the results below) then the Riemann map $\psi_\Omega$ may be extended from $\bar \D$ to $\bar \Omega$.  For future reference we also denote by $\P_{x\to a}= \P_{x\to a;\Omega}^{(0)}$ the conditional law $\P_x$, given $X_\sigma = a$. (This is obtained by mapping a Brownian motion $\tilde B$ in $\D$, conditioned so that $\tilde B_{\tilde \sigma} = \tilde a$, where $\psi_\Omega(\tilde a) = a,$ and $\tilde \sigma$ is the first exit time from $\D$ by $\tilde B$, and performing the appropriate time-change).

Now suppose $\rho: \bar \Omega \to \R$ is a given smooth real valued function on $\bar \Omega$, and consider the law $\P^{(\rho)}$ of the associated massive Brownian motion with profile $\rho$, i.e. a Brownian motion dying with probability $\rho(X_s)ds$ at each time step.
Without conformal invariance, some arguments are required to construct this invariant Poisson kernel for arbitrary simply connected domains $\Omega$. In fact, various constructions are possible, which we summarise:
\begin{itemize}

\item the approach of Yadin and Yehudayoff \cite{YadinYehudayoffLERWconvergence} can be used directly to show that the left hand side of \eqref{e:PK_rough} forms a Cauchy sequence, and it would be possible to obtain some mild regularity this way.

\item we could use the so-called resolvent identity to \emph{define} the invariant Poisson kernel (this will be discussed in much greater detail below, but going back to the work of Makarov and Smirnov \cite{MakarovSmirnov} and also used extensively by Chelkak and Wan \cite{ChelkakWan}). 

\item by multiplying the standard (i.e., non-massive) invariant Poisson kernel by the appropriate change of measure (``Girsanov'') terms. 

\end{itemize}

In fact all these approaches will play a role in the arguments below and part of the work will be to show these various definitions coincide with one another.
While in \cite{ChelkakWan} the second option is chosen, we have found it simplest to start from the Girsanov approach which gives us a continuous object to work with and for which some minimal regularity can be easily shown. From this we can connect to the discrete picture and separately show that it obeys the appropriate resolvent identity, see Proposition \ref{P:resolvent}.

\begin{thm}\label{T:PKDelta}
Suppose $\Omega$ is bounded and simply connected. As $I \subset \partial \Omega$ shrinks to $a\in \partial \Omega$ (thought of as a prime end or a point on the Martin boundary) \begin{equation}\label{E:PKDelta}
\frac{\P_x^{( \rho)} (\sigma< \infty,  X_\sigma\in I) }{\P^{( \rho)}_{o} ( \sigma<\infty, X_\sigma \in I)} \to \lambda^{(\rho)} (x, a) .
\end{equation}
This is the \textbf{massive invariant Poisson kernel} with profile $\rho$. Furthermore, we can write 
\begin{equation}
\lambda_{\Omega}^{(\rho)} (x,a) = \frac{P^{( \rho)}_{\Omega} (x,  a)}{P^{( \rho)}_{\Omega} (o,  a)}
\end{equation}
where 
\begin{equation}
P^{( \rho)}_{\Omega} (x,  a) = P^{( 0)}_{\Omega} (x,  a)
\displaystyle\E_{x\to a} \left[ \exp \left(  -\int_0^\sigma \rho (X_s) ds \right)\right]\,,
\label{e:PKformula}
\end{equation}
and where $P^{(0)} (x, a) = P(x,a)$ is the above (non-massive) invariant Poisson kernel.
\end{thm}

\begin{rem}
  Note that, despite the name, the massive invariant Poisson kernel is \emph{not} conformally invariant.  We hope this will not create any confusion. Also, unlike in the non-massive case, we point out that $P^{(\rho)} (o,a)$ has a nontrivial value. As before, $x\mapsto P_\Omega^{(\rho)}(x,a)$  coincides up to a constant with the massive invariant Poisson kernel $x\mapsto \lambda_\Omega^{(\rho)}(x,a)$ but the choice of normalisation coming from $P^{(\rho)}$ will turn out to be the right one to have the resolvent identity (Proposition \ref{P:resolvent}) below. 
\end{rem}

\begin{proof}[Proof of Theorem \ref{T:PKDelta}]
For $I \subset \partial \Omega$ (viewed as a subset of the Martin boundary), \begin{align}
    \P_x^{( \rho)} (\sigma< \infty, X_\sigma\in I)  
    & = \E_x^{( 0)} \left[1_{\{ X_\sigma\in I\}} \exp \left(- \int_0^\sigma \rho(X_s) ds \right) \right] \nonumber \\
    & = \P^{(0)}_x ( X_\sigma \in I) \E_x \left[ \left.\exp\left( - \int_0^\sigma \rho(X_s) ds  \right) \right| X_\sigma \in I\right] \nonumber \\
    & = 
    \P_x^{(0)} (X_\sigma \in I) \int_I \E_{x\to y}  \left[\exp\left( - \int_0^\sigma \rho(X_s) ds \right)\right] h^{\#}_I(x, dy) 
    \label{eq:PKGirsanov1} 
\end{align}
where $h^{\#}_I(x, dy)$ is the harmonic measure of Brownian motion restricted to $I$, viewed as a measure on the Martin boundary, and normalised so that it gives $I$ unit mass. 

Suppose now that $I\downarrow a$ in the above sense of Martin boundaries.  The integrand can be rewritten as 
$$ \E_{x\to y}\left[ \exp ( - \int_0^\sigma \rho(X_s) ds )\right] = \E_{\tilde x \to \tilde y} \left[ \exp ( - \int_0^{\tilde \sigma} \rho \circ \psi ( \tilde X_s) \cdot 
 | \psi'(\tilde X_s) |^2 d s) \right] , $$
 where $x = \psi_\Omega (\tilde x),  y =\psi_\Omega( \tilde y) $, $\tilde X$ is a Brownian motion in $\D$ (and the expectation $\E_{\tilde x \to \tilde y} $ corresponds to conditioning a Brownian motion in $\D$ starting from $\tilde x$ to leave $\D$ through $\tilde y$). Suppose $y \to y_0 \in \partial \Omega$ in the sense of the Martin boundary topology; that is, $y = \psi (\tilde y)$ and $\tilde y \to \tilde y_0 \in \partial \D$. To each $\tilde y \in \partial \D$ we can thus associate a law $(X_s)_{0\le s \le \sigma}$ which is a realisation of $\P_{x\to y}$ where $y = \psi( \tilde y)$. Since in $\D$, changing $\tilde y$ amounts to applying a certain M\"obius map (a rotation if $x =o$), one can easily check that we get a coupling of the realisations of $\P_{x\to y}$ such that $\int_0^\sigma \rho(X_s) ds $ converges almost surely (using the boundedness of $\rho$ and the a.s. finiteness of $\sigma$). Since $\rho \ge 0$, we also deduce by the dominated convergence theorem that $\E_{x\to y}[ \exp ( - \int_0^\sigma \rho(X_s) ds )]$ converges as $y \to y_0$, i.e., the left hand side is continuous. 

Thus the integrand in \eqref{eq:PKGirsanov1} is continuous with respect to the natural topology on the Martin boundary of $\Omega$. Using again dominated convergence, it follows that
 $\E_{x}[ \exp ( - \int_0^\sigma \rho(X_s) ds  )|X_{\sigma} \in I]$, converges to $\E_{x \to a}[ \exp ( - \int_0^\sigma \rho(X_s) ds  ) $ as $I \downarrow a$.
Taking the ratio of the right hand side of \eqref{eq:PKGirsanov1} with the same expression but for the starting point $o$, and using \eqref{e:PK_rough}, we conclude that the limit \eqref{E:PKDelta} exists, and is equal to the right hand side of \eqref{e:PKformula}, as desired.
\end{proof}

Note that from the formula \eqref{e:PKformula} a number of features of the invariant Poisson kernel are immediately obvious, such as its continuity with respect to $x$ or $a$, or continuity with respect to the domain in the Carath\'eodory sense.

\subsection{Convergence of discrete Poisson kernel ratio}\label{se:discretePconv}

Recall the discrete setup of Section \ref{S:LERW_general} of a random walks on $\Gtree$ a (general) embedded planar graph, killed with probability $\rho^\delta (v) = \tilde \rho(v) \delta^2/2 + o(\delta^2)$ whenever it is at the vertex $v\in \Gtree \subset \Omega$.
Recall also that $Z^{(\rho)}_{\Gtree} (x^\delta, a^\delta)$ denotes the total mass of random walk paths going from $x^\delta$ to $a^\delta$ without being killed: that is, $Z^{(\rho)}_{\Gtree} (x^\delta, a^\delta) = \P^{(\rho)}_{x^\delta} (\sigma^\delta< \infty, X^\delta_{\sigma^\delta} = a^\delta)$ (note that with a slight abuse of notation we write $Z^{(\rho)}$ when really it depends on $\rho^\delta$ rather than $\rho$). 

\begin{lem}\label{lem:YY}
    For $r>0$ and $\Gtree$ a sequence of subgraphs approximating $\Omega$ all containing a ball of radius $r$ around $o^\delta\to o$ and marked boundary points $a^\delta$, and $x^\delta\to x\in B(o,\frac12r)$ it holds that
    \[
    \frac{Z^{(\rho)}_{\Gtree} (x^\delta,a^\delta)}
    {Z^{(\rho)}_{\Gtree}(o^\delta , a^\delta)}
    \to \lambda_\Omega^{( \rho)} (x, a) = \frac{P^{(\rho)}_\Omega(x,a)}{P^{(\rho)}_\Omega(o,a)}
    \]
\end{lem}

Essentially this is an adapation of the arguments in \cite{YadinYehudayoffLERWconvergence}. We will content ourselves with  describing the instances where changes are needed. Because of this, we feel it is useful to first give a simplified overview of the arguments in \cite{YadinYehudayoffLERWconvergence}, as it may otherwise prove difficult to see why the instances below are indeed the only arguments that need to be changed. In order to go through this we first map $\Omega$ to the unit disc $\D$ (this is both because the proof of convergence to SLE requires mapping everything to a reference domain, and in order to avoid issues related to the distinction between prime ends of a domain and the actual boundary). Thus let $\phi = \psi^{-1}$ be the conformal isomorphism from $\Omega$ to $\D$ sending $o$ to $0$ and $a$ to $1$ (we drop the subscript $\Omega$ from now on).

The first observation of Yadin and Yehudayoff is that ``the exit probabilities are correct'': given a small macroscopic arc $\tilde I$ on $\partial \D$ and $ I = \psi (\tilde I) \subset \partial \Omega$, then the ratio of the probabilities $\P_x^\delta (X_{\sigma^\delta} \in I) /   \P_o^\delta (X_{\sigma^\delta} \in I) $ converges to what one would expect, namely $\P_x( B_\sigma \in I) / \P_o ( B_\sigma \in I)$. This is the content of their Lemma 4.8 and is a more or less obvious consequence of the assumption that random walk converges to Brownian motion, together with planarity. When the arc $I$ is small and close to $a$ (or rather $\tilde I\subset \partial \D$ is small and close to $\tilde a$), this ratio is itself close to the continuum invariant Poisson kernel $\lambda_\Omega(x,a)$ (essentially by definition of the latter, see \eqref{e:PK_rough}).

Next for a boundary point $a\in \partial\Omega$ (again, understood as a prime end) and an interior point $x$, set $\tilde a = \phi(a)$, $\tilde x = \phi(x)$ and $\tilde X = \phi(X)$. They fix a small boundary arc $\tilde I\subset \partial \D$ centered around $\tilde a$ and write
$$
H^\delta(x,a, \Omega) = \P_{\tilde x}^\delta ( \tilde X_{\sigma^\delta} = \tilde a | \tilde X_{\sigma^\delta} \in \tilde I )\P_{\tilde x}^\delta ( \tilde X_{\sigma^\delta} \in \tilde I),
$$
so that it suffices to prove that the ratio
\begin{equation}\label{YYgoal}
\frac{\P_{\tilde x}^\delta ( \tilde X_{\sigma^\delta} = \tilde a | \tilde X_{\sigma^\delta} \in \tilde I )}{\P_o^\delta ( \tilde X_{\sigma^\delta} = \tilde a | \tilde X_{\sigma^\delta} \in \tilde I )} \approx 1
\end{equation}
is close to 1, in the sense that 
\begin{equation}
    \label{E:PKgoal}
\limsup_{\tilde I \downarrow \tilde a} \limsup_{\delta \to 0}    \left| \frac{\P_{\tilde x}^\delta ( \tilde X_{\sigma^\delta} = \tilde a | \tilde X_{\sigma^\delta} \in \tilde I )}{\P_o^\delta ( \tilde X_{\sigma^\delta} = \tilde a | \tilde X_{\sigma^\delta} \in \tilde I )}  - 1\right| = 0 
\end{equation}

The key argument for this is a multiscale coupling, which is implicitly described in Propositions 5.4 -- 5.6. The idea is to consider exponentially growing scales $R_j, j = 1, \ldots, N$ (from microscopic to macroscopic) and points $\xi_j$ in the unit disc at distance of order $R_j$ from both $\tilde a$ and the unit circle, with $R_j \approx e^j r$, and $r$ being the width of the arc $\tilde I = \phi (I)$. At the smallest scale $j=1$, $\xi_j$ is thus at a distance of order $r$ from $\tilde a$ itself, while at the largest scale $j = N$, $\xi_j$ is at a macroscopic distance from $\tilde a$. They condition both walks starting from $x$ and $o$ respectively to leave $\Omega$ through $I$. At each successive scale, there is a positive chance that when the walks get to that scale, they will go and visit the \emph{same} predetermined small ball, chosen to be centered around $\xi_j$ and to have a radius proportional to $R_j$ times a very small constant. Once that is the case, the conditional chances of exiting through $a$ specifically rather than anywhere else in $I$ are necessarily essentially the same for both walks, which proves \eqref{YYgoal}. Essentially, Proposition 5.4 shows that the coupling succeeds with positive probability at each scale independently of previous attempts. Proposition 5.5 shows that the ratio in \eqref{YYgoal} is bounded even in the unlikely event that the coupling never succeeded, and Proposition 5.6 quantifies how close to 1 the ratio in \eqref{YYgoal} once there is a success.

At the discrete level, the only properties of the walks that are needed are planarity (which of course always holds for the random walks considered in this paper) as well as crossing estimates (i.e., \eqref{E:crossing_goal}) and simple consequences of it, such as Beurling estimates. These will be discussed briefly in Appendix \ref{SS:discrete}. At the continuum level the required estimates are described (without proof) in Section 3 of
\cite{YadinYehudayoffLERWconvergence}, mostly Proposition 3.3 to Lemma 3.10. One can see that with very few exceptions, these estimates are properties of Brownian motion which are concerned with typical events of Brownian motion that can additionally be required to hold in a short time scale. In such cases the change of measure between massive (or drifted) and ordinary Brownian motion is harmless, hence these properties also obviously hold true in our situation. The lone exception is Proposition 3.3 (recalled below as Lemma \ref{L:BMhit}, which concerns the probability to hit a very small ball); since this is not a typical event for Brownian motion, one needs to consider the effect of the change of measure and more specifically one needs to check that conditioning on the atypical event does not cause the change of measure to degenerate. This will be carried out in Appendix \ref{SS:continuum}. This concludes our discussion of the proof of Lemma \ref{lem:YY}.

\subsection{Density and absolute continuity with respect to classical SLE\texorpdfstring{$_2$}{2}}\label{S:absolutecontinuity}
In this section we will use assumption \ref{as:Absolutecontinuity} to show that the massive SLE$_2$ with profile $\rho$ is mutually absolutely continuous with respect to classical SLE$_2$ and the Radon Nikodym derivative is bounded.

\begin{prop}\label{prop:partitionbound}
There is a constant $c_0$ such that for every $R$ and every domain $\Gtree\subset B(0,R)$ and internal point $x^\delta$ and boundary point $a^\delta$,
\begin{equation}
    1\geq\frac{Z^{(\rho)}_{\Gtree} (x^\delta, a^\delta)}
    {Z^{(0)}_{\Gtree} (x^\delta, a^\delta)}
    \geq \exp(-c_0R^2\|\widetilde{\rho}\|_\infty)\,.
\end{equation}
\end{prop}
\begin{proof}
    The inequality on the left hand-side is obvious since $1-\rho^\delta \le 1$.
    For the equality on the right hand side note that the ratio can be written as
    \begin{align*}
        \E^{(0)}_{\Gtree,x^\delta\to a^\delta}
        \left(\prod_{s=0}^{\sigma^\delta-1}(1-\rho^\delta(X_s))\right)
        &\geq \E^{(0)}_{\Gtree,x^\delta\to a^\delta} \left[\left(1-\delta^2\|\widetilde{\rho}\|_\infty+o(\delta^2)\right)^{\sigma^\delta}\right]\\
        & \geq \left(1-\delta^2\|\widetilde{\rho}\|_\infty+o(\delta^2)\right)^{\E^{(0)}_{\Gtree,x^\delta\to a^\delta}(\sigma^\delta)}\\
        &\geq \exp(-c_0R^2\|\widetilde{\rho}\|_\infty)\,,
    \end{align*}
    where the second inequality follows from Jensen's inequality, and the last inequality is a direct consequence of assumption \ref{as:Absolutecontinuity}.
\end{proof}
Note that since $$\frac{Z^{(\rho)}_{\Gtree} (x^\delta, a^\delta)}
    {Z^{(\rho)}_{\Gtree} (o^\delta, a^\delta)} = \frac{Z^{(\rho)}_{\Gtree} (x^\delta, a^\delta)}
    {Z^{(0)}_{\Gtree} (x^\delta, a^\delta)} \frac{Z^{(0)}_{\Gtree} (o^\delta, a^\delta)}
    {Z^{(\rho)}_{\Gtree} (o^\delta, a^\delta)}
    \frac{Z^{(0)}_{\Gtree} (x^\delta, a^\delta)}
    {Z^{(0)}_{\Gtree} (o^\delta, a^\delta)}$$ we deduce from this lemma that the left hand side (which is our discrete massive martingale observable) is also bounded provided that $\Omega$ contains a ball of radius $r$ around $o$: indeed, the first two fractions are bounded by the previous Lemma, and the third one is the classical (non-massive) martingale observable, which is bounded as long as $\Omega$ contains a ball of radius $r$ as in \cite{ChelkakWan}, say.

Let $\P^{(\rho)}_{o^\delta \to a^\delta}$ denote the random walk starting from $o^\delta$ with mass $\rho$, conditioned so that $\sigma^\delta < \infty$ and $X^\delta_{\sigma^\delta} = a^\delta$.

\begin{prop}
    Let $$D^{(\rho)}_{\Gtree}(\gamma^\delta):=\frac{\P^{(\rho)}_{o^\delta \to a^\delta}(\mathrm{LE}(X^\delta)=\gamma^\delta)}{\P^{(0)}_{o^\delta \to a^\delta}(\mathrm{LE}(X^\delta)=\gamma^\delta)}$$ 
    be the Radon Nikodym derivative of the Loop erasure of  the $\rho$-massive random walk with respect to the loop-erasure of the regular random walk, conditioned to leave $\Gtree$ at $a^\delta$.
    Then 
    $$D^{(\rho)}_{\Gtree}(\gamma^\delta)\leq \exp(c_0\diam(\Omega )\|\rho\|_\infty)$$ 
    for each $\gamma^\delta$.
    Furthermore 
    \begin{equation}
        \label{eq:boundRN}
\E^{(0)}_{o^\delta \to a^\delta}[\log(D^{(\rho)}_{\Gtree}(\mathrm{LE}(X^\delta)))]\geq -c_0\diam(\Omega )^2\|\rho\|_\infty.
    \end{equation}
\end{prop}
\begin{proof}
    Note that by definition
    \begin{equation}\label{eq:LoopErasureRN-dervative}
    D^{(\rho)}_{\Gtree}(\gamma^\delta)
    =\frac{\sum_{ X^\delta:\mathrm{LE( X^\delta)=\gamma^\delta}}w^{(\rho)}( X^\delta)}
    {\sum_{ X^\delta:\mathrm{LE( X^\delta)=\gamma^\delta}}w^{(0)}( X^\delta)}
    \cdot\frac{Z^{(0)}_{\Gtree} (o^\delta, a^\delta)}{Z^{(\rho)}_{\Gtree} (o^\delta, a^\delta)}\,.
    \end{equation}
    The upper bound follows from the fact that the first fraction is less than $1$ and the bound in Proposition \ref{prop:partitionbound}.
    For the lower bound note that the second fraction is bigger than $1$ and the first one is equal to $\E^{(0)}_{\Gtree}[\prod_{s=0}^{\sigma^\delta-1}(1-\rho^\delta( X^\delta_s))|\mathrm{LE}( X^\delta)=\gamma^\delta]$.
    Taking the logarithm and applying Jensen gives the lower bound
    \[
    \log(D^{(\rho)}_{\Gtree}(\gamma^\delta)\geq\E^{(0)}_{o^\delta \to a^\delta}\left[\left.\log(\prod_{s=0}^{\sigma^\delta-1}(1-\rho^\delta( X^\delta_s)))\right|\mathrm{LE}(X^\delta)=\gamma^\delta\right]\,.
    \]
    Taking the expectation in $\gamma^\delta$ with respect to the distribution of the loop-erased random walk removes the conditioning and gives
    \begin{equation*}
        \E^{(0)}_{o^\delta \to a^\delta}[\log(D^{(\rho)}_{\Gtree}(\mathrm{LE}(X^\delta)))]
        \geq \E^{(0)}_{o^\delta \to a^\delta}[\log(\prod_{s=0}^{\sigma^\delta-1}(1-\rho^\delta( X^\delta_s)))]\geq -c_0\diam(\Omega)^2\|\widetilde{\rho}\|_\infty\,,
    \end{equation*}
    again by assumption \ref{as:Absolutecontinuity}.
\end{proof}

This last proposition implies that the laws of the Loop-erasure of the massive random walks are tight, and any limit point is mutually absolutely continuous with respect to SLE$_2$.
Furthermore, by \eqref{eq:boundRN} and Girsanov's theorem, the Loewner transform of a limit point is driven by a process of the form $\xi_t=\sqrt{2}B_t+2\lambda_t$ and therefore our goal is to identify $\lambda_t$.
See Section 2.6 of \cite{ChelkakWan} for more details.

\subsection{Resolvent identity of the massive invariant Poisson kernel}\label{subsec:resolvent}

Armed with the absolute continuity coming from the previous subsection we now come back to proving properties of the invariant Poisson kernel (and its spatial derivative in the next subsection). 

We fix a boundary point $y = a$, and consider the invariant Poisson kernel $P^{(\rho)}_{D}(x)=P^{(\rho)}(x,a)$ associated just with the mass profile $\rho$ from the previous section.
On a subdomain $\Dprime$ (which will later be $\Omega_t$) with a marked boundary point $a'$ (which will later be $a_t$), we consider $P_{\Dprime}^{(\rho)} (x) = P_{\Dprime}^{(\rho)} (x,a')$ (note that we therefore drop the dependence on $a'$ from our notations).

We aim to establish the following \textbf{resolvent identity} for $P^{(\rho)}_{\Dprime}(x)$, which plays a crucial role in the rest of the proof:

\begin{equation}\label{e:res1}
P_{\Dprime}^{(\rho)} (x) = P_{\Dprime}^{(0)}(x) - \int_{\Dprime} G_{\Dprime}^{(\rho)}(x,y) \rho(y) P_{\Dprime}^{(\rho)}(y) dy,
\end{equation}
where $G_{\Dprime}^{(\rho)}(x,y) = G_{\Dprime}^{(\rho)}(x,y)$ is an appropriate Green function, more precisely the Green function of the Brownian motion with mass profile $\rho$ with Dirichlet boundary conditions in $\Dprime$: that is, 
$$
G_{\Dprime}^{(\rho)}(x,y) = \int_0^\infty p_t^{(\rho)} (x,y) dt
$$
where for $t\ge 0$,
\begin{equation}\label{eq:massGreen}
p_t^{(\rho)}(x,y): = p_t(x,y) \E_{x\to y;t} \left[\exp (-\int_0^t \rho (X_s) ds ) 1_{\{X[0,t] \subset \Dprime\}} \right]     
\end{equation}
and $p_t(x,y)$ denotes the (full plane) transition probabilities for standard Brownian motion.

A trivial but essential property of the massive Green function is that it is bounded by the standard (non-massive) Green function, i.e.
\[
G^{(\rho)}_{\Dprime}(x,y)\leq G^{(0)}_{\Dprime}(x,y)
\]
for all $x,y\in \Dprime$.
This allows us to estimate many integrals simply with their non-massive counterparts.

We are now in a position to prove the resolvent identity \eqref{e:res1}.

\begin{prop}\label{P:resolvent}
  We have
  \begin{equation}\label{e:res}
P_{\Dprime}^{(\rho)} (x) = P_{\Dprime}(x) - \int_{\Dprime} G_{\Dprime}^{(\rho)}(x,y) \rho(y) P_{\Dprime}(y) dy\,.
\end{equation}
\end{prop}

\begin{proof}
    It is clear from the expression of the potential kernel $P^{(\rho)}(x)$ in Theorem \ref{T:PKDelta} that this is $C^2$ in $\Dprime$; the definition as a limit of hitting probabilities shows that it is harmonic with respect to the generator of the massive Brownian motion, i.e. $\cL_\rho := \frac12\Delta-\rho$.
    
    Let us consider the function
    $$
    f(x) = P_{\Dprime}^{(\rho)} (x) - P_{\Dprime}(x). 
    $$
    Our goal is to show that $f (x) = \int_{\Dprime} G_{\Dprime}^{(\rho)} (x,y) \rho(y) P_{\Dprime} (y) dy $ for all $x \in \Dprime$.

    Note that from \eqref{e:PKformula} $f$ is clearly continuous (in fact twice differentiable) in $\Dprime$. Furthermore if $x = x_n $ tends to a point $x' \in \partial \Dprime$ with $x '\neq \aprime$, both terms in $f(x_n)$ tend to zero. We now claim that $f(x)$ is ``negligible compared to the probability of leaving by $\aprime$'' as $x \to \aprime$. Let us explain what we mean by this. Recall the map $\phi:\Dprime  \to \D$ which is the conformal isomorphism sending $o$ to 0 and $\aprime$ to $1$. For small $r>0$, let $A^\D_r$ be the set of points in $\D$ at distance $r$ from $1$ and let $A_r = \phi^{-1} ( A^\D_r)$. Let $\Dprime_r$ be the connected complement of $\Dprime \setminus A_r$ not adjacent to $\aprime$.
    
    \begin{lem}\label{L:exitarc}
For $x \in A_r$ we have $f(x) = o(1/r)$ uniformly. On the other hand, for fixed $x \in \Dprime$, as $r \to 0$ (assume without loss of generality that $r$ is small enough that $x \in\Omega_r$), if $\sigmaprime_r = \inf \{ t \ge 0: X_t \notin \Dprime_r\}$ then there is a constant $C = C(x,\Omega)$ 
such that  
$$
\P^{(0)}_x ( X_{\sigmaprime_r}\in A_r) \le Cr, \quad r >0. 
$$    \end{lem}


\begin{proof}
 Recall that $P_{\Dprime}^{(\rho)}(x)  = P_{\Dprime}(x) \E_{x \to \aprime} [ \exp (- \int_0^{\sigmaprime} \rho(X_s) ds ) ].$
 Note that $P_{\Dprime}(x)$ is exactly conformally invariant by our definition \eqref{defPK}) and so $P_{\Dprime}(x) \le O(1/r) $ uniformly on $A_r$ (using the exact value of the invariant Poisson kernel in the unit disc and the fact that by definition $A_r $ is mapped to $A^\D_r$ by $\phi$ which lies at distance $r$ from 1). Furthermore, uniformly for $x \in A_r$, the expectation $\E_x ( \exp (-\int_0^{\sigmaprime} \rho(X_s)ds ) ) \to 1$ as $x \to \aprime$ (i.e., as $r\to 0$) by dominated convergence since $\sigmaprime\to 0$ in probability (indeed, $\Dprime$ is simply connected hence has a regular boundary) and $\rho (x) \ge 0$. Thus 
 $$
 f(x) = P_{\Dprime}(x) \left(\E_x [ \exp (-\int_0^{\sigma} \rho(X_s)ds)]  -1\right) = o(1/r) 
 $$
as $r\to 0$, as desired. 

For the probability to leave through $A_r$, we simply note that 
using conformal invariance of harmonic measure for Brownian motion, we have that 
$$
\P_x^{(0)} ( X_{\sigmaprime_r}\in A_r ) = \P_{\phi(x)}^{(0)} ( X_{\sigma^\D_r} \in A_r^\D ) \le C(x,\Omega) r
$$
which concludes the proof.
\end{proof}

For the proof of Proposition \ref{P:resolvent} we will need to find a suitable martingale. Recall that $f(x) = P_{\Dprime}^{(\rho)}(x) - P_{\Dprime}(x).$ Note that, since $P_{\Dprime}^{(\rho)}$ is $\cL_{\rho}$ harmonic,
\begin{align*}
\cL_\rho f & = \cL_\rho P_{\Dprime}^{(\rho)} - \cL_\rho P_{\Dprime}\\
& = 0 - ( \frac12 \Delta - \rho) P_{\Dprime}\\
& = \rho P_{\Dprime}. 
\end{align*}
We will call $g(x) =  - \rho(x) P_{\Dprime}(x)$, so that the above identity reads
\begin{equation}\label{eq:harmofg}
\cL_\rho f = - g. 
\end{equation}

\begin{lem}
    For $t \le \sigmaprime$, let $I_t = \int_0^t \rho(X_s) ds$, and let
    $$
    M_t = f(X_t) e^{-I_t} + \int_0^t g(X_s) e^{-I_s}ds. 
    $$
    Then for each $r>0$, $(M_{t \wedge \sigmaprime_r}, t \ge 0)$ is a continuous local martingale under $\P^{(0)}_x$.
\end{lem}

\begin{proof}
    We apply It\^o's formula:
    \begin{align*}
        dM_t &= -(de^{-I_t}) f(X_t) + e^{-I_t} (df (X_t)) + g(X_t)e^{-I_t} dt\\
        & = \text{mart.} + e^{-I_t} \left[ -\rho(X_t) f(X_t) dt + \frac12 \Delta f(X_t) dt + g(X_t) dt 
        \right]\\
        & = \text{mart.} + e^{-I_t} [ \cL_\rho f(X_t) + g(X_t)] dt 
    \end{align*}
    where \text{mart.} is a term denoting a continuous local martingale. Hence, since $\cL_\rho f + g = 0$ by \eqref{eq:harmofg}, $M_t$ is indeed a continuous local martingale up to time $\sigmaprime_r$. 
\end{proof}

   We are now ready to derive a proof of the resolvent identity, i.e., Proposition \ref{P:resolvent}. To do this we apply the optional stopping for $M_t$ at time $t \wedge \sigmaprime_r$ until which $M$ remains bounded, so the application is justified: indeed $f$ is continuous on $\Omega_r$ and has zero boundary conditions on $\partial\Omega_r$ except on $A_r$ where it is uniformly bounded by $o(1/r)$ by  Lemma \ref{L:exitarc}. A similar justification applies to $g$ also. 
    We find, since $M_0 = f(x),$
    \begin{equation}\label{eq:OST}
    f(x) = \E_x \Bigg[ f(X_{t\wedge \sigmaprime_r} ) e^{-I_{t\wedge \sigmaprime_r}} \Bigg] + \E_x \left[ \int_0^{t\wedge \sigmaprime_r} g(X_s) e^{- I_s} ds\right]
    \end{equation}

We will now let $t\to \infty$ and \emph{then} $r\to 0$ in both terms separately to obtain the resolvent identity. We start with the first term, for which we claim the limit as $t\to \infty$ and then $r\to 0$ is simply zero. Indeed, 
    since $r>0$ is fixed and $f$ is bounded $\bar \Dprime_r$, by the dominated convergence theorem we get
$$
\lim_{t\to\infty}\E_x \Bigg[ f(X_{t\wedge \sigmaprime_r} ) e^{-I_{t\wedge \sigmaprime_r}} \Bigg] \le \E^{(0)}_x [ f( X_{\sigmaprime_r})].
$$
      Furthermore recall that $f(x) = 0$ for $x \in \partial\Dprime$ with $x \neq \aprime$, so the only contribution comes from the event where $X_{\sigmaprime_r} \in A_r$: 
      thus
    \begin{align*}
        |\E_x^{(0)} [ f( X_{\sigmaprime_r})] | 
        & \le  \P_x( X_{\sigmaprime_r} \in A_r) \sup_{x \in A_r} |f(x)|\\
        & = O(r) o(1/r) \to 0
    \end{align*}
    by Lemma \ref{L:exitarc}.

    It remains to deal with the second term, which is of the form 
    $$
    \int_{\Dprime_r} g(y) G^{(\rho)}_{t,r}(x,y) dy,
    $$
    where $G^{(\rho)}_{t,r}(x,y)$ is the Green function for Brownian motion up to time $t$, weighted by $\exp ( -I_t)$, and stopped when leaving $\Dprime_r$, i.e.,  
    $$
    G^{(\rho)}_{t,r} (x,y) = \int_0^t p_{s,r}^{(\rho)} (x,y) ds; \quad p^{(\rho)}_{s,r}(x,y) = p_s(x,y) \E_{x\to y;s } [e^{-I_s} 1_{\{ X[0,s] \subset \Dprime_r\}}]
    $$
Letting $t \to \infty$, there is no problem (by monotone convergence) in showing that $G_{t,r}^{(\rho)}$ converges pointwise to $G_r^{(\rho)}(x,y)$, the massive Green function in the domain $\Dprime_r$. Subsequently letting $r\to 0$, there is for the same reason no problem in showing that this converges monotonically to $G^{(
\rho)}_{\Dprime}(x,y)$. Thus, for all $t < \infty$ and $r>0$,
$$
G^{(\rho)}_{t,r} (x,y) \le G^{(\rho)}_{\Dprime} (x,y) \le  G^{(0)}_{\Dprime}(x,y)\,.
$$

Furthermore, observe that since  $|g(y)| = \rho(y) P_{\Dprime} (y)$ and $\rho $ is bounded,
\begin{equation}\label{420}
\int_{\Dprime} G^{(0)}_{\Dprime}(x,y) |g(y)|dy  \le C \int_{\Dprime} G^{(0)}_{\Dprime}(x,y) P_{\Dprime}(y) dy<\infty.
\end{equation}
To see the finiteness of the right hand side, observe that both terms in the integral on the right hand side are conformally invariant, and that after mapping by the conformal isomorphism $\phi$,  $P_{\D} (\phi(y)) \sim 2/ |\phi(y) - 1|$ as $y \to \aprime $, while $G_{\D}^{(0)} (\phi(x), \phi(y)) \sim |1- \phi(y)|$, so that the integrand is bounded in the neighbourhood of $y = \aprime$. Elsewhere, $P_{\Dprime} (y)$ is bounded, and the Green function $G_{\Dprime} (x,y)$, while having a singularity at $y = x$, is clearly integrable over $\Dprime$.

Consequently the assumptions of the dominated convergence theorem are satisfied, and we deduce that 
$$
\lim_{r\to 0}\lim_{t\to \infty} \E_x \left[ \int_0^{t\wedge \sigmaprime_r} g(X_s) e^{- I_s} ds\right] = \int_{\Dprime} G^{(\rho)}_{\Dprime}(x,y) g(y) dy.
$$
Plugging into \eqref{eq:OST}, we therefore obtain:
$$
f(x) = \int_{\Dprime} G^{(\rho)}_{\Dprime}(x,y) g(y) dy,
$$
which is the desired identity. This concludes the proof of the resolvent identity (Proposition \ref{P:resolvent}).
\end{proof}

There is also a resolvent identity for the Green function $G^{(\rho)}$ itself.

\begin{prop}\label{P:Gresolvent}
  We have
  \begin{equation}\label{e:Gres}
G_{\Dprime}^{(\rho)} (x,z) = G_{\Dprime}(x,z) - \int_{\Dprime} G_{\Dprime}(x,y) \rho(y) G_{\Dprime}^{(\rho)}(y,z) dy.
\end{equation}
\end{prop}
\begin{proof}
    Since $G^\rho$ is also $\cL_\rho$-harmonic in both variables, and $G$ is harmonic, the proof proceeds essentially along the same lines. The difference is that we need to replace Lemma \ref{L:exitarc} by the following control over $f(z) := G^{(\rho)}_{\Dprime} (x,z) - G^{(0)}_{\Dprime}(x,z)$ near $z = x$:

 \begin{lem}
    \label{lem:Gcontrol} Let $f$ be as above. Then as $z \to x$,
    $$
    f(z) = o ( \log |x-z|^{-1}).
    $$
 \end{lem}   

    \begin{proof}
Since $G^{(\rho)}_\Omega(x,z) \le G^{(0)}_\Omega(x,z)$ we only need a lower bound on the massive Green function. This is easily obtained: for any $\eps>0$, 
\begin{align*}
    G^{(\rho)}_\Omega(x,z) & = \int_0^\infty p^{(\rho)}_t(x,z) dt \\
    & \ge \int_0^\eps p^{(0)}_t(x,z)\E_{x\to z;t}(e^{-I_t}) dt  \\
    & \ge e^{-\eps \|\rho\|_\infty} \left( G^{(0)}( x, z) - \int_\eps^\infty p_t^{(0)}(x,z) dt \right)\\
    & \ge (1- \eps \|\rho\|_\infty) \left(G^{(0)}(x,z) - O(1) \right)
\end{align*}
where we used the easy consequence of Beurling's estimate that for a simply connected domain $\Omega$, $p_t(x,z) \le t^{-1 - \eta}$ for some $\eta>0$. The lemma follows.
    \end{proof}
The rest of the proof of Proposition \ref{P:Gresolvent} proceeds exactly as in the proof of Proposition \ref{P:resolvent}.
    \end{proof}

From this we can deduce a massive version of Hadamard's formula (see \cite[Lemma 4.7]{ChelkakWan} for the case of constant mass). Let $(K_t)_{t\ge 0}$ be a growing family of compact $\Omega$-hulls in $\Omega$, growing from $a$ to the inner point $o\in \Omega$ and having the locality property (see e.g. \cite{Berestycki2014LecturesEvolution} for a definition of these terms), generated by a continuous curve $\gamma_t$ growing in $\Omega$ from $a$ to $o$. Let $\Omega_t = \Omega\setminus K_t$, which is a monotone decreasing family of subdomains of $\Omega$, and let $a_t$ be the point on the (Martin) boundary of $\Omega_t$ corresponding to $\gamma_t$. We will assume that $\gamma[0, \infty)$ has Lebesgue measure zero. 

Let $G_t^{(\rho)} = G_{\Omega_t}^{(\rho)}$ be the massive Green function in $\Omega_t$ and let $P_t^{(\rho)} = P_{\Omega_t}^{(\rho)}$ be the massive invariant Poisson kernel (defined in Theorem \ref{T:PKDelta}, associated with the boundary point $a_t$). Since $\Omega_t$ is monotone decreasing, it is obvious that for each fixed $z\neq x$ in $\Omega$, $t\mapsto G_t^{(\rho)} (x,z)$ is monotone decreasing until the first time $t$ such that either one of $x$ or $z$ is in $K_t$. The massive Hadamard identity expresses the derivative of $G_t^{(\rho)}$ in terms of product of massive invariant Poisson kernels. Intuitively, this is because the paths from $x$ to $z$ that are lost between times $t$ and $t + dt$ can be decomposed into two portions, one from $x$ and one from $z$, which go via the tip of the curve $a_t$. (In the case of constant mass, this is stated without proof within the proof of Proposition 3.1 by Makarov and Smirnov \cite{MakarovSmirnov}; the argument below is close to the proof given by Chelkak and Wan \cite[Lemma 4.7]{ChelkakWan}).

\begin{prop}\label{l:massHadamard}
For each fixed $x \neq z$, such that $x, z \in \Omega_t$, the massive Green function $s\mapsto G^{(\rho)}_s(x,z)$ is differentiable at $s =t$, and
\begin{equation}
    \partial_tG_t^{(\rho)}(x,z)=-\pi P_t^{(\rho)}(x)P_t^{(\rho)}(z),
\end{equation}
    where $P_t^{(\rho)}$ is defined above.
\end{prop}
\begin{proof}
    Since the mass $\rho$ is nonnegative (or more precisely by \eqref{eq:massGreen}), for $s<t$, 
    $$0\le G^{(\rho)}_s (x,z) - G^{(\rho)}_t(x,z) \le  G^{(0)}_s (x,z) - G^{(0)}_t(x,z)$$
    so the increments of $G^{(\rho)}$ are bounded by those of $G^{(0)}$. Note that $s\mapsto G^{(0)}_s (x,z)$ is differentiable by the classical (non-massive) Hadarmard formula (see, e.g., \cite{SS}) and $s\mapsto G_s^{(\rho)}$ is differentiable a.e. by monotonicity. 
    
    Therefore $\partial_tG_t^{(\rho)}(x,z)$ exists for all $x,z\in \Omega_t$ and it also holds that
    \begin{equation}\label{e:deriv-comp}
        0\leq -\partial_tG_t^{(\rho)}(x,y)\leq -\partial_tG_t^{(0)}(x,y)=\pi P_t(x)P_t(y)<\infty.
    \end{equation}
    Recall from the Green function resolvent identity that 
    $$
    G_t^{(\rho)} (x,z) = G^{(0)}_t(x,z) - \int_{\Omega_t} G^{(0)}_t(x,y) \rho(y) G_t^{(\rho)}(y,z) dy 
    $$
    Since $\gamma[0, t]$ has Lebesgue measure equal to zero we can replace the domain of integration in the above integral by $\Omega$. 

Differentiating this identity (\ref{e:Gres}), using the classical Hadamard formula and the resolvent identity for the massive invariant Poisson kernel (Proposition \ref{P:resolvent}), we obtain:
\begin{align}
    \partial_tG_t^{(\rho)}(x,z) &=-\pi P_t(x)P_t(z)
    +\pi\int_{\Omega} P_t(x)P_t(y) \rho(y) G_t^{(\rho)}(y,z) dy \nonumber \\
& \quad \quad \quad     -\int_{\Omega} G_{\Dprime}(x,y) \rho(y) \partial_t G_t^{(\rho)}(y,z) dy \nonumber \\
    &=-\pi P_t(x)P_t^{(\rho)}(y) - \int_{\Omega} G_{\Dprime}(x,y) \rho(y) \partial_t G_t^{(\rho)}(y,z) dy.\label{423}
\end{align}
Differentiation under the integral is justified because both $G_t$ and $G_t^{(\rho)}$ are decreasing in $t$ so their product is also monotone, and we can then use the positive case of the Fubini theorem (i.e., the Tonelli theorem) as well as the fundamental theorem of calculus to conclude.

Consider now the integral operators $\mathfrak{G}_t$ and $\mathfrak G_t^{(\rho)}$ acting on an arbitrary function $h:\Omega_t\to\mathbb R$ by
\begin{align}
    (\mathfrak{G}_t h)(x):=\int_{\Omega}G_t(x,y)\rho(y)h(y)dy\text{ and}\\
    (\mathfrak{G}_t^{(\rho)} h)(x):=\int_{\Omega}G_t^{(\rho)}(x,y)\rho(y)h(y)dy\,,
\end{align}
whenever the integrals above converge. 
Using these operators we can rewrite \eqref{423} as
\begin{equation}\label{426}
    (\mathrm{Id}+\mathfrak{G}_t)(\partial_t G_t^{(\rho)}(\cdot,z))=-\pi P_t(\cdot)P_t^{(\rho)}(z).
\end{equation}
Again by the resolvent identity (\ref{e:Gres}) we will see that 
\begin{equation}
    (\mathrm{Id}-\mathfrak{G}_t^{(\rho)})(\mathrm{Id}+\mathfrak{G}_t)h=h,
\end{equation}
whenever all involved integrals are absolutely convergent.
Indeed the left hand side, evaluated at $x$, expands as
\begin{align*}
    &h(x)-\int_{\Omega}G_t^{(\rho)}(x,y)\rho(y)h(y)\dd y
    +\int_{\Omega}G_t(x,y)\rho(y)h(y)\dd y
    \\
    &\quad\quad-
    \iint_{\Omega\times\Omega }G_t^{(\rho)}(x,z)\rho(z)G_t(z,y)\rho(y)h(y)\dd y \dd z\\
    &=h(x)+\int_{\Omega} \rho(y)h(y)\left(G_t(x,y)-G_t^{(\rho)}(x,y)-
    \int_{\Omega} G_t^{(\rho)}(x,z) \rho(z) G_t(z,y)\dd z\right)\dd y\\&=h(x),
\end{align*} 
since the bracketed term in the integral vanishes due to the Green function resolvent identity (\ref{e:Gres}) and reversibility of $G_t(z,y)$.

 For $h(x) = \partial_tG_t^{(\rho)}(x,z)$ the absolute convergence of the integrals is justified by, respectively: (\ref{e:deriv-comp}) and the finiteness of the integral in \eqref{420}, and an application of (4.6) in \cite{ChelkakWan} (also recalled below explicitly in \eqref{eq:CWboundPQ3}) together with the fact that $\int_\Omega G_t(z,y) dy$ is bounded uniformly in $z\in \Omega$ by a constant depending only on the diameter of $\Omega$.
 
Applying \eqref{426} and Proposition \ref{P:resolvent} yields
\begin{align*}
    \partial_tG_t^{(\rho)}(\cdot,z)&=
    (\mathrm{Id}-\mathfrak{G}_t^{(\rho)})(\mathrm{Id}+\mathfrak{G}_t)\partial_tG_t^{(\rho)}(\cdot,z)\\
    &=(\mathrm{Id}-\mathfrak{G}_t^{(\rho)})(-\pi P_t(\cdot)P_t^{(\rho)}(z))\\
    &=-\pi P_t^{(\rho)}(\cdot)P_t^{(\rho)}(z).
\end{align*}
This proves the statement.
\end{proof}
\subsection{Derivative resolvent identity}

We continue with the setup introduced above Proposition \ref{l:massHadamard}. We now assume further that $K_t$ is a growing family of hulls generated by a continuous curve $\gamma_t$, and $\Omega_t=\Omega\setminus K_t$.
Below we will in fact need to assume that this curve is either non-massive SLE$_2$ or something absolutely continuous with respect to it.

We define the (radial) derivative kernel $Q_t$ in $\Omega_t$ by setting for $y \in\Omega_t$,
$$
Q_{t}(y) = Q_{t}(y, a_t) = \mathrm{Im} \left( \frac{2\phi_t (y)}{(1- \phi_t(y))^2}\right),
$$
where that $\phi_t: \Omega_t \to \D$ is the unique conformal isomorphism mapping $o$ to $0$ and $a_t$ to 1. (This is the radial analogue of (4.1) in \cite{ChelkakWan}).
The reason for introducing this radial derivative kernel $Q_{t}$ is that, writing $P_t$ for the potential kernel ratio $P_{\Omega_t}$,
$$
dP_t(x) = Q_t(x) d\xi_t.
$$
In the chordal case this is an easy application of It\^o's formula which can be seen for instance from Proposition 7.7 in \cite{Berestycki2014LecturesEvolution}.
In the radial case this calculation is slightly more involved but essentially similar.
To calculate $\dd P^{(\rho)}_t(x)$ we need to define the massive version of $Q_t$.
For this it is simpler to \emph{define} $Q^{(\rho)}_{t}$ via its associated resolvent identity: namely,  we set
\begin{equation}\label{eq:deriv_resolvent}
Q_{t}^{(\rho)}(x) = Q_{t}(x) - \int_{\Omega} G_{t}^{(\rho)} (x,y) \rho(y) Q_{t}(y) dy. 
\end{equation}
To make this definition we need to check that the integral appearing on the right hand side is finite; we will check this is the case for almost every time (at the moment we still do not need to assume that $K_t$ is the hull of an SLE so the estimate is deterministic). Since $G^{(\rho)}_{t} (x,y) \le  G_{t}(x,y) $ it suffices to prove the same with $G_{t}^{(\rho)}$ replaced with the ordinary Green function $G_{t}$ in $\Omega_t$.
This is done in \cite[Corollary 4.6]{ChelkakWan} for the chordal case.
What remains to be checked is that these arguments can be carried over to the radial case, but for completeness we will also repeat the rest of the argument.

\begin{prop}\label{P:derivative}
For any fixed Loewner chain we have the following estimate. For all $x \in \Omega$, for almost every time $t\ge 0$,  $\int_{\Omega} G_{t} (x,y) Q_{t}(y) <\infty $. In particular this holds almost surely for the Loewner chain driven by $(\xi_t)_{t\ge 0}$.
\end{prop}

\begin{proof}
Using \cite[Lemma 4.1]{ChelkakWan} and expressing our $Q$ in the upper-half plane via a suitable M\"obius map (and conformal invariance of $P$), it is easy to check that
\begin{equation}\label{eq:CWboundsPQ}
\left| \frac{P_t(y)}{P_t(x)} - 1\right| G_{t} (x,y) \le C, \quad \left| \frac{Q_t (y)}{P_t(y)} - \frac{Q_t(x)}{P_t(x) } \right| \frac{G_{t} (x,y)}{P_t(x)} \le C
\end{equation}
for some uniform constant $C>0$ independent of anything. In particular,  
\begin{equation}
        \label{eq:CWboundPQ3}
    P_t(y) G_t(y,x) \le P_t(x) G_t(y,x) + C P_t(x)
\end{equation}
and
\begin{equation}
    \label{eq:CWboundPQ2}
    |Q_t (y) | G_{t} (x,y) \le C P_t(x) P_t(y) + |Q_t(x) | G_{t} (x,y) +  C |Q_t(x)|. 
\end{equation}
When we integrate over $y \in\Omega$, the third term does not depend on $y$ and therefore is integrable, the second term depends on $y$ only through $G_{t} (x,y)$ but using the fact that $G_{t} (x,y) \le G_\Omega(x,y)$ which has only a logarithmic singularity at $x$, it is easy to see that this term too is integrable for all $t\ge 0$. The problematic term is the first term. The proposition follows if we can prove
\begin{equation}\label{eq:propgoalPQ}
\int_\Omega P_t(y) dy < \infty
\end{equation}
for almost every $t\ge 0$, almost surely. In fact, we will check 
\begin{equation}
    \label{eq:propgoalPQ2}
    \int_0^\infty \left[ \int_{\Omega_t} P_t (y) dy \right]^2 dt  \le C(\Omega)< \infty
\end{equation}
where $C(\Omega)$ depends only on $\Omega$ (this is the analogue of Corollary 4.6 (i) in \cite{ChelkakWan}). This will obviously imply \eqref{eq:propgoalPQ} and thus Proposition \ref{P:derivative}. This however follows immediately from the classical Hadamard formula since 
$$
\left[\int_{\Omega_t} P_t(y)dy\right]^2 = \iint_{\Omega} P_t(x) P_t(y) dx dy = - \frac1{\pi} \iint_\Omega \partial_t G_t(x,y) dx dy.
$$
Thus integrating over $t>0$ we get 
$$
\int_{t>0} \left[\int_{\Omega_t} P_t(y)dy\right]^2 dt = \frac1{\pi}\iint_\Omega G_\Omega(x,y) - G_{\infty} (x,y) dx dy  \le \frac1{\pi} \iint G_\Omega(x,y) dx dy < \infty,
$$
since $\Omega$ is bounded.
\end{proof}

For our subsequent use of the radial derivative kernel in It\^o's formula we need a strengthening of \eqref{eq:propgoalPQ2}, which is the analogue of Corollary 4.6 (ii) in \cite{ChelkakWan}. This is the key estimate, and requires us to make one additional assumption compared to the general setup introduced above Proposition \ref{l:massHadamard}, We will stop assuming that $(\gamma_t)_{t\ge 0}$ is deterministic and arbitrary, and instead assume it is random, absolutely continuous on compact intervals of time $[0,T]$ with respect to SLE$_\kappa$ for some $\kappa\le 4$. (In our application in this article, $\gamma$ will be either SLE$_2$ or the inhomogeneous massive SLE$_2$, so these assumptions will be satisfied). 

\begin{lem}\label{l:quadint}
    Almost surely, for any fixed $T>0$, 
    $$
    \int_0^T \int_{\Omega_t} P_t(x)^2 \dd x \dd t < \infty\,.
    $$
\end{lem}

In view of the nature of the singularity of $P_t(x)$ near $a_t$, such a result might seem surprising initially.

\begin{proof}
    Fix $x \in \Omega$ and suppose $t < \sigma_x$. Let $y$ be sufficiently close to $x$ that $y \in \Omega_t$, but $y \neq x$. We know that we may write the Green function 
    $$
    G_t(x,y) = -\frac1{\pi} \log |x-y| + h_t(y),
    $$
    where $h_t(y)$ is harmonic in $y \in \Omega_t$ (including at $y =x$) and $h_t(x) = 1/(\pi) \log \crad (x, \Omega_t)$ (see, e.g., Theorem 1.23 in \cite{BP}). The left hand side is (for instance by Hadamard's formula) differentiable in $t$; since in the right hand side only $h_t(y)$ depends on $t$ we get that $t\mapsto h_t(y)$ is differentiable and
    $$
    \partial_t h_t(y) = \partial_t G_t(x,y) = - \pi P_t(x) P_t(y).
    $$
    We want to take $y\to x$ and this requires an exchange of derivation of limit. This can be done using the fact that $h_t(x)$ is harmonic and thus satisfies the mean value property:
    $$
    \frac{h_{t+\delta} (x) - h_t(x)}{\delta} = \int_y \frac{h_{t+\delta} (y) - h_t(y)}{\delta} s(\dd y)
    $$
    where $s(\dd y)$ is the uniform law on some given circle centred at $x$ of arbitrary sufficiently small positive radius. Pointwise,  as $\delta \to 0$, the terms in the integral converge to $P_t(x) P_t(y)$ by the above. The assumptions of the dominated convergence theorem are satisfied by the mean value theorem and the fact that $t\mapsto P_t(x)P_t(y)$ is continuous at a given time $t$ so long as $y\in \Omega_t$. We deduce, using harmonicity of $P_t(y)$:
    $$
    \frac1{\pi}\partial_t  \log \crad (x,\Omega_t) = -\pi P_t(x)^2,
    $$
    for any $x \in \Omega_t$. Therefore, 
    $$
    \int_0^T P_t(x)^2 \dd t = \frac1{\pi^2} \log \frac{\crad (x,\Omega)}{\crad (x, \Omega_T)},
    $$
    for any $x \in \Omega_T$ (note the unimportant difference of a factor of $\pi/2$ with respect to the proof of Corollary 4.6(ii) in \cite{ChelkakWan}, which comes from a different choice of normalisation of the Laplacian and what appears to be a typo).

    This can be integrated over $x \in \Omega_T$ and even $x\in \Omega$ when we set the integrand to be infinity on $K_T$; since $K_T$ has Lebesgue measure a.s. equal to zero this does not make a difference. We get
    $$
    \int_{\Omega_T} \int_0^T P_t(x)^2 \dd t \dd x= \frac1{\pi^2} \int_{\Omega_T} \log \frac{\crad (x, \Omega)}{\crad (x, \Omega_T)} dx.
    $$
    Using Fubini's theorem (since the integrand is positive) we can exchange the space and time integration on the left hand side. Taking expectations, we further obtain: 
    $$
    \E \left[ \int_0^T \int_{\Omega_T} P_t(x)^2 \dd x \right] = \frac1{\pi^2} \int_\Omega \E \left[ 1_{x\in \Omega_T} \log \frac{\crad(x, \Omega) }{\crad (x, \Omega_T)} \right] \dd x
    $$
    In the left hand side there is no difference if we replace $\Omega_T$ by $\Omega_t$ (since the difference has zero Lebesgue measure a.s.) and in the right hand side for the same reason we can ignore the indicator. The result follows since the expectation on the right hand side is finite. Indeed, much stronger bounds are known than (negative) logarithmic moments for $\crad (x, \Omega_T)$: it is known that $\P( \dist (w, K_T) \le \eps) \le C(T) \eps^{1-\kappa/8}$ with $\kappa\le 4$ (see, e.g., Proposition 4 in \cite{Beffara} for the chordal case, but the argument easily generalises to the radial case). This gives  polynomial hence the desired logarithmic moments using the Koebe one-quarter theorem.  
\end{proof}

This result implies a lemma (``stochastic Fubini'') corresponding to Lemma 4.8 in \cite{ChelkakWan}:
\begin{lem}\label{l:stochFubini}
    The process $t\mapsto \int_{\Omega}G^{(\rho)}_t(x,y)Q_t(y)\dd y$ is a local semi martingale in the filtration of the driving function $(\xi_t)_{t\ge 0}$.
    Moreover almost surely, for all $T>0$ the following identity is satisfied:
    \begin{equation}
        \int_\Omega\int_0^TG^{(\rho)}_t(x,y)Q_t(y)\dd\xi_t\dd y=\int_0^T\int_\Omega G^{(\rho)}_t(x,y)Q_t(y)\dd y\dd\xi_t
    \end{equation}
\end{lem}
This follows from Proposition~\ref{P:derivative} and Lemma~\ref{l:quadint} in the same way as in \cite{ChelkakWan}.

\subsection{Identification of LERW limit: proof of Theorem \ref{T:LERWgeneral_massTEXT}}
\label{subsec:identification}

In this section we complete the proof of Theorem \ref{T:LERWgeneral_massTEXT} by showing that the limit of loop-erased random walk on the triangular lattice, in the case where the walk itself converges to inhomogeneous massive Brownian motion, exists and is given by a Lowener evolution whose driving function $\xi$ satisfies \eqref{E:Loewner_off}. 

Let us summarise the situation at this stage. As already mentioned at the end of 
Section \ref{S:absolutecontinuity}, we know that subsequential limits of the loop-erasure exist (i.e. the laws of the loop-erasure are tight), and it suffices to identify any subsequential limit uniquely. We also know, again from the same discussion, that any subsequential limit is absolutely continuous with respect to radial SLE$_2$, and may be described by a radial Loewner evolution whose driving function $\xi$ satisfies 
\begin{equation}\label{eq:drivingSDE}
\dd\xi_t = \sqrt{2}dB_t + 2 \lambda_t \dd t.    
\end{equation}
Our goal is thus to identify $\lambda_t$ (we will show that $\lambda_t = \frac{Q_t^{(\rho)}(o)}{P_t^{(\rho)}(o)}$) and show that the above SDE has a unique weak solution (we will in fact get strong pathwise uniqueness).

\begin{proof}[Proof of Theorem \ref{T:LERWgeneral_massTEXT}] We know by a classical argument, see \cite[Remark 3.6]{Lawler2001ConformalTrees}, that for every vertex $x^\delta$, we get a \textbf{discrete martingale observable} $M_n^{(\rho)} (x^\delta)$ defined by
\begin{equation}
M_n^{(\rho)}(x^\delta) = \frac{Z^{(\rho)}_{\Omega^\delta \setminus \gamma^\delta [0,n]} (x^\delta, \gamma^\delta(n))}{Z^{(\rho)}_{\Omega^\delta \setminus \gamma^\delta [0,n]} (o^\delta, \gamma^\delta(n))}.
\end{equation}

Applying Lemma \ref{lem:YY} (which, as already explained, extends to our setup one of the main results of Yadin and Yehudayoff \cite{YadinYehudayoffLERWconvergence}), we see that if we take $x^\delta \to x \in \Omega$ and parameterize $\gamma^\delta[0,n]$ by capacity (which requires taking $n = n^\delta(t)$ for any given $t>0$) then, assuming $z \in \Omega_t$, we have as $\delta \to 0$, for each $t>0$,
\begin{equation}\label{eq:convPKmart}
\frac{Z^{(\rho)}_{\Omega^\delta\setminus \gamma^\delta[0,n]}(x^\delta, \gamma^\delta(n))}{Z^{(\rho)}_{\Omega^\delta\setminus \gamma^\delta[0,n]}(o^\delta, \gamma^\delta(n))} \to \frac{P^{(\rho)}_{\Omega_t} (x, a_t)}{P^{(\rho)}_{\Omega_t} (o, a_t)}
\end{equation}
where $a_t$ denotes the tip of $\gamma_t$, viewed as a prime end in $\Omega_t$.
This is the analogue of Proposition 3.16 in \cite{ChelkakWan}. The right hand side is continuous in $t\ge 0$, as remarked at the end of Section~\ref{S:PK}. Furthermore, as argued in Section 2.4 of \cite{ChelkakWan}, the discrete martingales $M_n^{(\rho)}(x^\delta)$ yield continuous martingales in any subsequential limit (see in particular Remark 2.3 in \cite{ChelkakWan}), hence we deduce 
 that for every $x \in \Omega$, every $r>0$,
 $$
 M_t^\rho(x) : = \frac{P^{(\rho)}_{\Omega_t} (x, a_t)}{P^{(\rho)}_{\Omega_t} (o, a_t)} ; t \wedge \sigma_r
 $$
is a martingale, where for every $r>0$ the stopping time $\sigma_r$ is defined as $\inf\{ t>0: |\gamma_t - b | \wedge |\gamma_t - x | \le r\}$.

Now we explain how these martingales can be used to identify the drift $\lambda_t$ uniquely. To see this, first recall that $P_t^{(\rho)}:=P_{\Omega_t}^{(\rho)}$ satisfies the resolvent identity (Proposition \ref{P:resolvent}), namely,
$$
P_t^{(\rho)}(x, a_t) = P_t(z,a_t) + \int_\Omega G_t^{(\rho)} (x,y) \rho(y) P_t(y, a_t) \dd y. 
$$
Now we know that in the critical ($\rho=0$) case, one has as a direct application of Loewner's equation
$$
\dd P_t (x, a_t) = Q_t (x) \dd\xi_t.
$$
Since this is an a.s. identity, this same identity remains true by absolute continuity for our subsequential limit. Together with the Hadamard formula (Lemma \ref{l:massHadamard}), the resolvent identity for $P_t^{(\rho)}$ (Proposition \ref{P:resolvent}) implies
\begin{align}
\dd P_t^{(\rho)} (x)&= Q_t(x)\dd \xi_t+\int_\Omega G^{(\rho)}_t(x,y)\rho(y)Q_t(y)\dd\xi_t \dd y-\pi P^{(\rho)}_t(x)\int_\Omega P^{(\rho)}_t(y)\rho(y)P_t(y)\dd y\dd t\nonumber\\
&=Q^{(\rho)}_t(x)\dd \xi_t-\pi P^{(\rho)}_t(x)\int_\Omega P^{(\rho)}_t(y)\rho(y)P_t(y)\dd y\dd t,\label{eq:notinverse}
\end{align}
where we used Lemma \ref{l:stochFubini} to exchange $\dd \xi_t$ and $\dd y$.

Since we know that $\frac{P_t^{(\rho)}(x)}{P_t^{(\rho)}(o)}$ is a bounded martingale for any $x\in B(o,\tfrac12 r)$ until time $\sigma_r$, we consider:
\begin{align}
    \dd \frac{P_t^{(\rho)}(x)}{P_t^{(\rho)}(o)}=&
    P_t^{(\rho)}(o)^{-1}\dd P_t^{(\rho)}(x) + P_t^{(\rho)}(x)\dd \ (P_t^{(\rho)}(o)^{-1})+\dd\ \langle P_t^{(\rho)}(o)^{-1},P_t^{(\rho)}(x)\rangle_t \nonumber \\
    =&P_t^{(\rho)}(x)\left(\dd\ (P_t^{(\rho)}(o)^{-1})-\pi P_t^{(\rho)}(o)^{-1}\int_{\Omega_t}\rho(y)P_t(y)P^{(\rho)}_t(y)\dd y \dd t\right) \label{mart1}\\
    &+Q_t^{(\rho)}(x)\left(P_t^{(\rho)}(o)^{-1}\dd \xi_t +\dd\ \langle \xi_t,P_t^{(\rho)}(o)^{-1}\rangle\right).\label{mart2}
\end{align}
Since this is a martingale for any $x$, and $P^{(\rho)}_t$ and $Q^{(\rho)}_t$ are clearly linearly independent functions of $x$, both of \eqref{mart1} and \eqref{mart2} (and thus each bracket in these two lines) must be local martingales and thus have vanishing finite variation parts.
By standard stochastic calculus arguments (applying It\^o's formula to describe $\dd P_t^{(\rho)}(o)^{-1}$ from \eqref{eq:notinverse}), the finite variation part of the second bracket is
\begin{equation}
    \frac{2}{(P_t^{(\rho)}(o))}\lambda_t-2\frac{Q^{(\rho)}_t(o)}{(P_t^{(\rho)}(o))^2},
\end{equation}
this implies that $\lambda_t=\frac{Q_t^{(\rho)}(o)}{P_t^{(\rho)}(o)}$, as desired.

The following lemma together with continuity of $\lambda_t$ implies the uniqueness of solutions to the SDE \eqref{eq:drivingSDE} by Novikov's condition.
\begin{lem}\label{lem:novikov}
There is a constant $C=C(\|\rho\|_\infty,\diam(\Omega ))<\infty$ such that the drift $\lambda_t$ almost surely satisfies
\begin{equation*}
    \int_0^\infty |\lambda_t|^2\leq C\,.
\end{equation*}    
\end{lem}
\begin{proof}
We start by noting that 
\begin{align*}
    P^{(\rho)}_t(o)&=\E_{o\to a_t;\Omega_t}[\exp(-\int_0^\sigma\rho(X_s)\dd s]\geq \exp(-c_0\|\rho\|_\infty\diam(\Omega )^2)\\
    & \ge \exp [ - \|\rho\|_\infty \E_{o\to a_t; \Omega_t} (\sigma)  ].
    \end{align*}
    We claim that $\E_{o\to a_t; \Omega_t} (\sigma)  \le c_0 \diam(\Omega)^2$. To see this, note that (for instance using the Doob transform description of Brownian motion conditioned to leave $\Omega_t$ by $a_t$),
    $$
    \E_{o\to a_t; \Omega_t} (\sigma) = \int_{\Omega_t} G_{\Omega_t}(o, y) \frac{P_t(y,a_t)}{P_t(o,a)} \dd y. 
    $$
Moreover, one can deduce from \eqref{eq:CWboundsPQ} and conformal invariance that 
$$
G_{t}(o,y) \frac{P_t(y,a_t)}{P_t(o,a_t)} \le c_0 ( 1+ G_{t}(o,y)) \le c_0 (1+ G_\Omega(o ,y) )
$$
so that 
$$
\E_{o\to a_t; \Omega_t} (\sigma)\le c_o \int_\Omega  (1 + G_\Omega(o,y) )\dd y  \le c'_0 \diam(\Omega)^2
$$
as claimed. 

Furthermore, by the resolvent equation for $Q^{(\rho)}_t$ \eqref{eq:deriv_resolvent} and $Q_t(o)=0$ by definition, we have
    \begin{equation}
        Q^{(\rho)}_t(o)=-\int_\Omega G_t^{(\rho)}(o,y)\rho(y)Q_t(y)\dd y\,.
    \end{equation}

 Combining these two estimates together and using  \eqref{eq:CWboundPQ2}, we get
    \begin{equation*}
        \int_0^\infty\left|\frac{Q^{(\rho)}_t(o)}{P^{(\rho_t)}(o)}\right|^2\dd t\lesssim \int_0^\infty \left(\int_\Omega  P_t(z)\dd x\right)^2\dd t \leq \frac{1}{\pi}\int_\Omega\int_\Omega  G_\Omega (z,w)\dd z\dd w\,,
    \end{equation*}
    as shown in the proof of \eqref{eq:propgoalPQ2}, and where the hidden constant depends only on $\diam(\Omega )$ and $\|\rho\|_\infty$ and the final integral is bounded by a constant only depending on $\mathrm{Diam}(\Omega )$.
\end{proof}
This concludes the proof of Theorem \ref{T:LERWgeneral_massTEXT}. \end{proof}

\section{Scaling limit of the LERW with drift and conformal covariance}\label{sec:drift}

In this section we explain how 
Theorem \ref{T:LERWgeneral_massTEXT} implies Theorem  \ref{T:LERWgeneral_drift}.
We also discuss why this implies convergence of the height function in the associated dimer models and why these satisfy conformal covariance (Theorem \ref{T:covariance}). 
To do this we will rely on our discrete Girsanov theorem (more precisely Corollary \ref{cor:girsanovpotential}). We will in particular need to check that the assumptions of Theorem \ref{T:LERWgeneral_massTEXT} are satisfied not only on the directed triangular lattice, but also on the 
 \textbf{image} of this lattice under a conformal map.

\begin{rem}
    While in principle possible, trying to work directly with quantities associated to the random walk with drift poses serious difficulty since the formal analogues of many statements (e.g. \ref{eq:deriv_resolvent}) have well-posedness issues stemming from worse regularity properties of the operator $\frac{1}{2}\Delta+\alpha\cdot\nabla$ compared to $\frac12\Delta-\rho$.
    \end{rem}

Let us begin with the proof for the triangular lattice.
\begin{proof}[Proof of Theorem \ref{T:LERWgeneral_drift}.]
    We want to apply Theorem \ref{T:LERWgeneral_massTEXT}.
    Recall that $\P^{(\ph)}$ denotes the law of a random walk on $\Gtree$ with drift $\alpha(v) = \alpha^\delta(v)$ where $\alpha_k^\delta(v) = \ph( v+ \delta \tau^k) - \ph(v) $ (the transition probabilities of the walk are described in \eqref{eq:driftweights}).  
    By Corollary \ref{cor:girsanovpotential} the law $\P^{(\ph)}$, conditioned so that $X^\delta_{\sigma^\delta} = a^\delta$, has the same law as the random walk $\P^{(\rho^\delta)}$ with mass $$\rho^\delta=\Delta^{\delta \mathbb T}\ph+\frac{1}{3}\beta^2,$$ 
    also conditioned so that $X^\delta_\sigma = a^\delta$ (and in particular to survive until doing so).

    We need to check that this random walk $\P^{(\rho^\delta)}$ satisfies the conditions of the theorem \ref{T:LERWgeneral_massTEXT}.
    First note that $\rho^\delta(v)=\delta^2\rho(v)/2+o(\delta^2)$ uniformly in $v$, where $\rho(z) = \tfrac12 (\Delta \ph(z) + \|\nabla \ph(z)\|^2 )$ as in the statement of Theorem \ref{T:LERWgeneral_drift}.

    As already noted in Section \ref{sec:notation}, this implies that $(X^\delta_{2t\delta^{-2}})_{t\geq 0}$ converges weakly, uniformly on compacts, to the law $\P^{(\rho)}$ of Brownian motion killed at the instantaneous rate $\rho(x)$ when in position $x \in \Omega$.
    
Secondly we need to check that there is a constant $c_0$ such that
    \begin{equation}
        \E^{(0)}_{\Gtree,o^\delta\to a^\delta}(\sigma^\delta)\leq c_0\delta^{-2}R^2\,.
    \end{equation}
    This is an estimate purely for the simple random walk on the triangular lattice and follows as in \cite[Corrollary 2.8]{ChelkakWan}.

    It remains to check the uniform crossing assumption \eqref{E:crossing_goal}.
    In order to prove that $\P^{(\rho^\delta)}_{z} ( \Cross_r )\ge c$ for some uniform constant $c>0$ we will in fact consider the restricted event $G = \Cross_r  \cap\{\sigma \le \delta^{-2}\}$, where $\sigma$ is the stopping time at which the walk first leaves the relevant rectangle.
    \begin{align}
        \P^{(\rho^\delta)}_{x} ( \Cross_r ) 
        &\geq \P^{(\rho^\delta)}_{x} (G)=\E^{(0)}_{z} ( 1_G\prod_{s=0}^{\sigma}(1-\rho^\delta(X_s) )\\
        &\geq \P^{(0)}_{z} (G)(1-\|\rho^\delta\|_\infty)^{\delta^{-2}}\\
        &=(1+o(1))\exp(-c\|\rho\|_\infty)\P^{(0)}_{x} (G).
    \end{align}
    The statement follows since $\P^{(0)}_{z} (G)$ is uniformly bounded below.  Thus the assumptions of Theorem \ref{T:LERWgeneral_massTEXT} are fulfilled and Theorem \ref{T:LERWgeneral_drift} follows.
\end{proof}

Now to prove \ref{T:covariance} we will first show conformal covariance for massive SLE$_2$.
\begin{thm}[Conformal covariance for SLE$_2$ with mass profile]\label{T:masscovariance}
    Let $\Omega $ be a simply connected domain, and $\rho:\Omega \to[0,\infty)$ be a bounded and continuous mass profile.
    Let $T:\Omega \to \tilde \Omega $ be a conformal map such that $|T'|$ is uniformly bounded away from $0$ and $\infty$ on $\Omega $.
    Then the image of radial massive SLE$_2$ from $a\in \partial \Omega $ to $o\in \Omega $, with mass profile $\rho$, under $T$ is given by radial massive SLE$_2$ from $T(a)$ (seen as an element of the Martin boundary) to $T(o)$ with mass profile $|(T^{-1})'(\cdot)|^2\rho(T^{-1}(\cdot))$.
\end{thm}
\begin{proof}
    The strategy of this proof is as follows:
    Let $X^\delta$ be the random walk on the directed triangular lattice with mass profile $\rho^\delta$ approximating $\rho$ as in the previous theorem.
    Consider the random walk $T(X^\delta)$, which is a random walk on the \textbf{image} of the directed triangular lattice $T(\Gtree)$ under $T$. Note that this is also a planar graph. We will aim to apply Theorem \ref{T:LERWgeneral_massTEXT} to this walk and so need to check that the conditions of the theorems are also fulfilled.

    Noting that condition \ref{as:Absolutecontinuity} on the expected time to leave the domain, 
        does not depend on the embedding of the graph, so it follows directly from what we proved above. 
        It remains to check the other two assumptions.
    The fact that it converges to a time changed massive Brownian motion follows from the standard conformal invariance of Brownian motion.
    Indeed if $B_t$ is standard Brownian motion, then
    so is $\tilde{B}_t:=T(B_{\xi^{-1}(t)})$, where $\xi(t)=\int_0^t|T'(B_s)|^{2}\dd s$.
    Let $\sigma_\Omega $ be the time at which $B_t$ leaves $\Omega $ and $\tilde \sigma_{\tilde \Omega }$ be the time at which $\tilde{B}_t$ leaves $\tilde \Omega $.
    By definition, $\xi(\sigma_\Omega )=\tilde\sigma_{\tilde\Omega}$.
    Now consider the Radon--Nikodym derivative of a massive Brownian motion in $\Omega $ with profile $\rho$ with respect to standard Brownian motion and rewrite to in terms of $\tilde{B}$:
    \begin{align}
        \exp\left(-\int_0^{\sigma_\Omega }\rho(B_s)\dd s\right)
        &=\exp\left(-\int_0^{\sigma_\Omega }\rho(T^{-1}(\tilde{B}_{\xi(s)})\dd s\right)\\
        &=\exp\left(-\int_0^{\tilde\sigma_{\tilde\Omega}}\rho(T^{-1}(\tilde{B}_s))|T'(T^{-1}(\tilde{B}_s)|^2\dd s\right)\\
        &=\exp\left(-\int_0^{\tilde\sigma_{\tilde\Omega}}\rho(T^{-1}(\tilde{B}_s))|((T^{-1})'(\tilde{B}_s)|^{-2}\dd s\right)\,.
    \end{align}
    Since the condition in Theorem~\ref{T:LERWgeneral_massTEXT} is convergence of paths up to time reparametrization, this shows that assumption \ref{as:timechange} holds.
    The uniform crossing estimate \eqref{as:crossing} follows the fact that this assumption is invariant under conformal maps using the Koebe one-quarter theorem and the uniform control over $|T'|$ in our assumption.
\end{proof}

To prove Theorem \ref{T:covariance} we first see how Theorem \ref{T:LERWgeneral_drift} implies convergence of the dimer height function when the weights are given by  \eqref{eq:weights_hex}.
\begin{prop}
\label{P:convergenceheight} Consider the directed triangular lattices with weights \eqref{eq:weights_hex}. Let $h^{(\alpha),\delta}$ denote the height function of the biperiodic dimer on the dimer graph $G^\delta$ (a piece of the hexagonal lattice). Then $h^{(\alpha),\delta}$ converges, in the sense that if $f$ is a test function, then
$$
(h^{(\alpha),\delta}, f) \to ( h^{(\alpha)}, f)
$$
converges in law and in the sense of moments. Here $h^{(\alpha),\delta}$ is identified with a function defined on all $\Omega$ which is constant on each face of $G^\delta$, and the inner product above is simply the $L^2$ inner product of square integrable functions.
\end{prop}

\begin{proof}
The convergence of the loop-erased random walk in Theorem~\ref{T:LERWgeneral_drift}, applied iteratively using Wilson's algorithm, implies the convergence of the uniform spanning tree $\cT$ with weights \eqref{eq:weights_hex} in the Schramm topology (\cite{Schramm1999ScalingTrees}).
Recall that this tree is identical to the tree one obtains from applying the Temperley bijection to the biperiodic dimer model with weights \eqref{eq:weights_hex}. 
We apply a general theorem (Theorem 8.1 in \cite{BLR_torus}) in order to deduce convergence of the height function. 
The theorem, which follows the approach originating in \cite{BLR_DimersGeometry}, is particularly simple to apply on simply connected domains, which is our situation. 
The assumptions of that theorem in this simplified situations are as follows:

\begin{itemize}

\item There exists $c>0$ such that the following holds. For any vertex $v \in v ( \Gtree)$, for any interior point $z \in \Omega$, if $r = |v -z| \wedge \text{dist} (v, \partial \Omega) \wedge\text{dist} (z, \partial \Omega)  $ and  if $\gamma$ is the loop-erasure of the random walk starting from $v$ and killed when it leaves $\Omega$, then for any $0 < \epsilon < 1$,
\begin{equation}
\label{E:LERWsf}
\P^{(\ph)}_v ( \gamma \cap  B(z, r \epsilon ) \neq \emptyset) \le \epsilon^c,
\end{equation}
in other words $\gamma$ is polynomially unlikely to enter a small ball near $z$.

\item There exists $C, c> 0$ and for every $k\ge 1$ there is a constant $M_k$ such that the following holds. For any $v \in v (\Gtree)$, let $\gamma$ denote the loop-erasure of the random walk starting from $v$ and killed when it leaves $\Omega$, parameterized from $v$ to $\partial \Omega$. For all $r > 0$, let $\theta_r$ denote the first time it leaves $B(v, r)$ and $\sigma_r$ the last time it is in $B(v, er)$. For $s<t$, let $ W( \gamma[ s, t])$ denote the intrinsic winding of the path $\gamma([s,t])$ (that is, on a graph where all edges are straight, the sum of the turning angles of $\gamma$ during that interval of time). Then for every $k \ge 1$,
\begin{equation}\label{E:windingtail}
\E^{(\ph)}_v [ \sup_{\theta_r \le s \le t \le \sigma_r } |W ( \gamma[s,t]) |^k] \le M_k,
\end{equation}
in other words the winding of the path $\gamma$ at any scale $r$ is of order one.
\end{itemize}
The proofs in \cite{BLR_DimersGeometry} of both these facts for the random walk on $\Gtree$ relies on nothing but the uniform crossing estimate of \eqref{E:crossing_goal}; in fact Proposition 4.4 of \cite{BLR_DimersGeometry} and Proposition 4.12 of \cite{BLR_DimersGeometry} are stated for general random walks on embedded planar graphs subject to the uniform crossing estimate (convergence to Brownian motion is also assumed throughout that section, but plainly that assumption is only used to identify the law of the limit of loop-erased random walk). Hence Proposition 4.4 of \cite{BLR_DimersGeometry} applies and yields \eqref{E:LERWsf}; and Proposition 4.12 of \cite{BLR_DimersGeometry} also applies and yields uniform stretched exponential tails hence \eqref{E:windingtail}. This completes the proof of Proposition \ref{P:convergenceheight}.

It is also possible to deduce \eqref{E:LERWsf} and \eqref{E:windingtail} from the Proposition 4.4 of \cite{BLR_DimersGeometry} and Proposition 4.12 of \cite{BLR_DimersGeometry} (applied to the usual driftless random walk on the square lattice) and the fact that the Radon-Nikodym derivative in Corollary~\ref{cor:girsanovpotential} is uniformly bounded by $\exp(2\sup_{x\in\Omega }|\ph(x)|)$.

Let us see how this may be used to finish the proof of \eqref{E:LERWsf} and \eqref{E:windingtail}. Consider for instance \eqref{E:LERWsf}.
\begin{align*}
\P^{(\ph)}_v ( \gamma \cap  B(z, r \epsilon ) \neq \emptyset) & = \E^{(0)}_v  [ 1_{ \{  \gamma \cap  B(z, r \epsilon ) \neq \emptyset \} } e^{\ph(X_\sigma)-\ph(X_0) - \tfrac12 A_\sigma} ] \\
& \le
\P^{(0)}_v [ \gamma \cap  B(z, r \epsilon ) \neq \emptyset]\exp(2\sup_{x\in\Omega }|\ph(x)|)
\end{align*}
so using Proposition 4.4 of \cite{BLR_DimersGeometry} we obtain \eqref{E:LERWsf}. The same argument also implies \eqref{E:windingtail}.

This concludes the proof of Proposition \ref{P:convergenceheight}.
\end{proof}

\begin{proof}[Proof of Theorem \ref{T:covariance}] We are now ready to finish the proof of Theorem \ref{T:covariance}. All that remains to prove is the conformal covariance of the limiting height function $h^{(\alpha); \Omega}$ (here we write explicitly the dependence on the domain $\Omega$ in order to avoid confusions). Let $\tilde \Omega$ be another bounded simply connected domain and let $T:  \Omega \to \tilde\Omega$ be a conformal map with bounded derivative. Recall that we wish to show
  $$
  h^{(\alpha);\Omega } \circ T^{-1}= h^{(\tilde \alpha); \tilde \Omega}
  $$
  where at a point $w \in \tilde \Omega$,
\begin{equation}\label{E:driftchange}
\tilde \alpha (w) = \overline{(T^{-1})'(w)} \cdot \alpha (T^{-1} ( w)).
\end{equation}
The idea is to use the same approach as in Theorem~\ref{T:masscovariance}, i.e. using both the convergence as in Proposition~\ref{P:convergenceheight} and the same type of result on the lattice obtained by the image of $G^\delta$ under $T$.
Indeed since the connection with the massive random walk (i.e. Theorem~\ref{cor:girsanovpotential}) does not depend on the embedding, the analogue of Theorem~\ref{T:LERWgeneral_drift} for the random walk on the deformed triangular lattice is an immediate consequence of Theorem~\ref{T:masscovariance}.
The scaling limit of the corresponding random walk is necessarily the image by $T$ of a Brownian motion with drift $\alpha$ in $\Omega$. Applying It\^o's formula and the Cauchy--Riemann equations, one checks that $\alpha$ and $\tilde \alpha$ are related via \eqref{E:driftchange}.

Likewise \eqref{E:LERWsf} and \eqref{E:windingtail} are trivially verified in $\tilde \Gtree$ because they are verified in $\Gtree$ and $T$ has bounded derivative. The dimer model associated to $T( G^\delta)$ is the image by $T$ of the dimer model on $G^\delta$ and has a height function which necessarily converges to $ h^{(\alpha); \Omega } \circ T^{-1}$ in $\tilde\Omega$. On the other hand, the law of the limiting Temperleyan tree is uniquely determined by the law of its branches, which by Theorem \ref{T:LERWgeneral_drift} are off-critical radial SLE$_2$ with limiting drift vector field $\tilde{\alpha}$, as described in \eqref{E:Loewner_off}.
We conclude that, in law,
$$
h^{(\alpha); \Omega } \circ T^{-1} = h^{(\tilde\alpha), \tilde\Omega},
$$
as desired.
\end{proof}

\appendix

\section{Continuum hitting probabilities}

\label{SS:continuum}

The following well-known proposition is recalled as Proposition 3.3 of \cite{YadinYehudayoffLERWconvergence} and can be proved using the fact that for two dimensional Brownian motion $\log(|B_t|)$ is a local martingale and the inequality $\log (1-r) \le -r$.
\begin{lem}\label{L:BMhit}
Let $\DD$ be the unit disc and let $x\in\DD$ be different from $0.$ Let $0<\epsilon<|x|$. Let $\sigma$ be the exit time of $X_t$ from the unit disc $\DD$.
Then
\begin{equation}
    \mathbb{P}_x(\exists t\in[0,\sigma]:|X_t|<\epsilon)\geq \frac{1-|x|}{\log(1/ \epsilon)},
\end{equation}
\end{lem}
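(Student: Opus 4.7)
The plan is to exploit the fact that $\log|z|$ is harmonic on $\mathbb{C}\setminus\{0\}$, so $M_t := \log|B_t|$ is a continuous local martingale on the stochastic interval up until $B$ hits the origin. I will apply optional stopping at the stopping time $T := \tau \wedge \tau_\epsilon$, where $\tau_\epsilon := \inf\{t\ge 0 : |B_t| \le \epsilon\}$ is the first hitting time of the closed disk of radius $\epsilon$ around the origin. Since on $[0,T]$ the process $|B_t|$ stays in $[\epsilon,1]$, the martingale $M_{\cdot\wedge T}$ is uniformly bounded (with $|M_{t\wedge T}|\le \log(1/\epsilon)$), so optional stopping applies cleanly and the recurrence of planar Brownian motion ensures $T<\infty$ almost surely.

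Writing $p := \mathbb{P}_x(\tau_\epsilon < \tau)$ and noting that $\log|B_\tau| = 0$ on $\{\tau<\tau_\epsilon\}$ while $\log|B_{\tau_\epsilon}| = \log\epsilon$ on $\{\tau_\epsilon < \tau\}$ (we may ignore the probability-zero event of a tie), optional stopping will give
\[
\log|x| \;=\; \mathbb{E}_x[\log|B_T|] \;=\; p\log\epsilon,
\]
hence
\[
p \;=\; \frac{\log(1/|x|)}{\log(1/\epsilon)}.
\]

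To finish, I will use the elementary inequality $\log(1-r)\le -r$ for $r\in[0,1)$ with $r = 1-|x|$, which yields $\log|x| \le -(1-|x|)$, i.e.\ $\log(1/|x|) \ge 1-|x|$. Substituting into the identity for $p$ gives the desired lower bound
\[
\mathbb{P}_x(\tau_\epsilon < \tau) \;\ge\; \frac{1-|x|}{\log(1/\epsilon)}.
\]

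There is no real obstacle here; the only points to be careful about are (a) justifying that $\log|B_t|$ is a true martingale up to $T$, which is handled by the uniform boundedness on $[0,T]$ since $|B_t|\ge\epsilon>0$ there, and (b) noting that the event $\{\tau = \tau_\epsilon\}$ has probability zero (the Brownian motion a.s.\ does not simultaneously hit the circle $|z|=\epsilon$ and the unit circle), so the boundary-value computation is unambiguous.
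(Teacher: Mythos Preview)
Your proof is correct and follows exactly the approach the paper indicates: it states (without writing out the details) that the lemma ``can be proved using the fact that for two dimensional Brownian motion $\log(|B_t|)$ is a local martingale and the inequality $\log(1-r)\le -r$,'' which is precisely the optional-stopping computation and the elementary bound you carry out.
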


We need to replace this with a suitable analogue for massive Brownian motion.
\begin{lem}\label{L:hitmass}
Suppose $\Omega = \DD$ is the unit disc. There exists a constant $c>0$ such that the following holds. Let $x\in \Omega$ be different from $0$ and $0<\epsilon<|x|$. Let $\sigma$ be the exit time of $X_t$ from the disc.
Then
\begin{equation}
    \mathbb{P}^{(\rho)}_x(\exists t\in[0,\sigma \wedge \sigma_*]:|X_t|<\epsilon)\geq c\frac{1-|x|}{\log(1/ \epsilon)}.
\end{equation}
\end{lem}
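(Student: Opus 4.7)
The plan is to reduce the problem to a one-dimensional boundary value problem which can be solved in closed form in terms of modified Bessel functions.

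First I would exploit the monotonicity of the hitting probability in the mass profile. By Feynman--Kac, if $X$ is a standard planar Brownian motion and $T_\epsilon$ is its first hitting time of $B(0,\epsilon)$, then
\begin{equation*}
\mathbb{P}^{(m)}_x(\exists t \in [0, \tau \wedge \tau_*]: |X_t|<\epsilon) = \mathbb{E}_x\left[\exp\left(-\int_0^{T_\epsilon} m^2(X_s)\,ds\right)\mathbf{1}_{T_\epsilon<\tau}\right].
\end{equation*}
Since $m$ is bounded, setting $M := \sup_\Omega m$ and replacing $m^2$ by $M^2$ pointwise in the integrand only decreases the right-hand side, so it suffices to prove the inequality in the case of a constant mass $M$.

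For constant mass, the function $u(x) := \mathbb{P}^{(M)}_x(\exists t \in [0, \tau \wedge \tau_*]: |X_t|<\epsilon)$ solves the massive Dirichlet problem $(-\Delta + M^2)u = 0$ on the annulus $\epsilon < |x| < 1$, with $u \equiv 1$ on $\{|x|=\epsilon\}$ and $u \equiv 0$ on $\{|x|=1\}$. By rotational symmetry $u$ depends only on $r = |x|$, and after the substitution $s = Mr$ the radial ODE becomes the modified Bessel equation of order zero. Its general solution is $u(r) = A\,I_0(Mr) + B\,K_0(Mr)$, and the boundary conditions force
\begin{equation*}
A = -B\,K_0(M)/I_0(M), \qquad B = \bigl[K_0(M\epsilon) - K_0(M)I_0(M\epsilon)/I_0(M)\bigr]^{-1}.
\end{equation*}
Using the standard asymptotics $K_0(x) = -\log(x/2) - \gamma + o(1)$ and $I_0(x) = 1 + o(1)$ as $x \to 0^+$, one immediately obtains $B \ge c(M)/\log(1/\epsilon)$ for all sufficiently small $\epsilon$.

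Finally I would analyse $g(r) := K_0(Mr) - K_0(M)I_0(Mr)/I_0(M)$, so that $u(r) = Bg(r)$. Using $K_0' = -K_1$, $I_0' = I_1$, and positivity of $I_0, I_1, K_0, K_1$ on $(0, \infty)$, one checks that $g$ is strictly decreasing on $(0,1]$, with $g(1) = 0$ and
\begin{equation*}
g'(1) = -M\bigl[K_1(M) + K_0(M)I_1(M)/I_0(M)\bigr] < 0.
\end{equation*}
A Taylor expansion near $r = 1$ gives $g(r) \ge c_1(M)(1-r)$ for $r$ close to $1$, while for $r \le 3/4$ monotonicity of $g$ gives $g(r) \ge g(3/4) \ge c_2(M)(1-r)$. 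Multiplying by $B$ yields $u(r) \gtrsim (1-r)/\log(1/\epsilon)$ in both regimes, with an implicit constant depending only on $M = \sup m$. The edge cases where $\epsilon$ is close to $1$ are trivial, since then $1 - |x| < 1 - \epsilon$ is small and the target set is macroscopic, so the probability is bounded below by a positive constant. The main obstacle is essentially bookkeeping with the Bessel asymptotics; the key conceptual step is the initial monotonicity reduction, which eliminates the need to deal with a variable mass inside an ODE.
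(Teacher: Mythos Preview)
Your proof is correct and takes a genuinely different route from the paper's. Both arguments begin with the same reduction: write the probability via Feynman--Kac and replace the variable mass $m^2$ by its supremum $M^2$. From there you diverge. You observe that for constant mass the hitting probability solves a radially symmetric massive Dirichlet problem on the annulus, write down the exact solution in terms of the modified Bessel functions $I_0,K_0$, and read off the bound from the standard asymptotics $K_0(x)\sim\log(1/x)$ near $0$ together with an elementary Taylor expansion of the solution near $r=1$. The paper instead reduces (via Jensen) to showing that the conditional expectation $\E_x(\tau_\epsilon\mid\tau_\epsilon<\tau)$ is bounded uniformly in $\epsilon$, and proves this by a probabilistic multiscale argument: it tracks the radial process $\log|B_t|$ across the geometric scales $e^{-j}$, notes that conditioning turns this into an $h$-transformed simple random walk whose transition probabilities do not depend on $\epsilon$, and sums the expected time spent at each scale.

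Your approach is shorter and gives more explicit constants, at the cost of relying on special-function identities. The paper's approach is heavier but more robust: the same multiscale skeleton is reused verbatim in the next lemma (the drifted case, Lemma~\ref{L:hitdrift}), where rotational symmetry is broken and no closed-form PDE solution is available. In that setting one must control a stochastic integral along the conditioned path, and the scale-by-scale decomposition is exactly what makes this tractable. So while your argument is cleaner for the massive case in isolation, the paper's method is chosen because it transfers.

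A minor point: your treatment of the regime where $\epsilon$ is not small is slightly breezy (``the probability is bounded below by a positive constant'' is not quite what is needed, since you still require a factor of $1-|x|$ when $|x|$ is near $1$), but this is easily patched either by noting that the exact Bessel formula is valid for all $\epsilon$ and analysing it directly, or by the same comparison with the massless case that you already use implicitly.
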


\begin{proof}
Suppose without loss of generality that $\eps = e^{-N}$ for some $N \ge 1$. Writing down the Radon--Nikodym derivative with respect to ordinary Brownian motion, and letting $\sigma_\epsilon$ being the first time the trajectory enters $B(0, \epsilon)$, we get
\begin{align*}
    \mathbb{P}_x^{(\rho)}(\sigma_\epsilon < \sigma)
    &=
      \mathbb{E}^{(0)}_x \left(1_{\{ \sigma_\epsilon < \sigma \} } \exp(-\int_0^{\sigma_\epsilon}\rho (X_s)ds)\right)\\
  & \ge  \mathbb{E}^{(0)}_x\Big(\exp(-\sigma_\epsilon\|\rho\|_\infty)\Big| \sigma_\epsilon < \sigma\Big) \mathbb{P}_x(\sigma_\epsilon < \sigma).
\end{align*}

Thus it remains to show
\begin{equation}\label{E:goalmas}
\mathbb{E}_x\Big(\exp(-M^2 \sigma_\epsilon )\Big| \sigma_\epsilon < \sigma\Big) \ge c,
\end{equation}
for some constant $c$, where $M^2 = \| \rho\|_\infty$. A priori, the difficulty is that conditioning the Brownian motion to hit a very small ball might cause the process to waste a lot of time and thus make it highly likely to be killed (or equivalently make the exponential term very small). We will see this is not the case; essentially, when we condition planar Brownian motion to hit zero before leaving the unit disc, it does so in an a.s. finite time.

Let $\sigma_0=\inf \{ t >0: |B_t |= e^k \text{ for some $k \in \Z$} \}$, and define inductively a sequence of stopping times $\sigma_n$ by setting
$$
\sigma_{n+1}=\inf\{t>\sigma_n:  |B_t| = e^{k} \text{ for some $k \in \Z$ with} |B_t | \neq |B_{\sigma_n}| \}.
$$
In words, the sequence $\sigma_n$ corresponds to the sequence of times at which $|B_t|$ is of the form $e^k$ for some distinct $k$.

Let $M_n = \log_r (|B_{\sigma_n}|)$. Because $\log|x|$ is a harmonic function on $\mathbb R^2$ and rotational invariance of Brownian motion, it is easy to see that
 $M_n$ is nothing but simple random walk on $\mathbb Z$ with a possibly random initial value $M_0$ which however differs from $\log |x|$ by at most 1. Let $\theta_\epsilon$ denote the first $n$ such that $M_n \le -N $ (recall that we have assumed $\epsilon = e^{-N}$, so $\theta_\epsilon$ corresponds to Brownian motion entering $B(0, \epsilon)$). Let $\theta $ be the smallest $n$ such that $M_n \ge 0$ (which corresponds to Brownian motion leaving the unit disc).

Now let us describe the effect of conditioning on $\sigma_\epsilon < \sigma$ (or equivalently $\sigma_\epsilon < \sigma$). The conditional transition probabilities
are well-known and easy to compute (this can be viewed as an elementary version of Doob's h-transform). Writing $\tilde \P$ for the conditional probability measure given $\theta_\epsilon < \theta$,
we obtain for $-N+1 \le k \le -1$,
 \begin{equation}\label{E:conditionalwalk}
\tilde  \P (M_{n+1}=k\pm 1 |
  M_n=k)=\frac12(1\mp \frac1{|k|}).
 \end{equation}
Note that this description is actually independent of $N$ (or equivalently $\epsilon$).
The formalism of electrical networks is useful to describe the conditional walk defined by \eqref{E:conditionalwalk} (which, up to the sign, is essentially a discrete version of a three-dimensional Bessel process, and is in particular transient).
To put it in this framework, note that \eqref{E:conditionalwalk} coincides with the walk on the network with conductances $c(k,k-1)=\binom{|k| +1}2$. Indeed in that case the corresponding stationary measure is then
$$\pi(k) = \binom{|k| +1}2 + \binom{|k| }2 = k^2\,,
$$
so that $c(k, k-1) / \pi(k)$ coincides with \eqref{E:conditionalwalk} as desired.
The corresponding unit current voltage $v(k)=\frac2{|k|}$ (if we set zero voltage at $-\infty$ and unit voltage at 1), which means that the expected number of visits to $k$ is exactly $2|k|$ if we let the conditioned walk \eqref{E:conditionalwalk} live forever. We deduce that
\begin{equation}\label{E:localtimecond}
\tilde \E ( \# \{n \le \theta: M_n = k \} ) \le 2 |k|.
\end{equation}
(This can also be computed directly using elementary computations based on the gambler's ruin probability, and considering the probability from $k$ that the conditioned walk ever returns to $k$).

Now let us decompose
\begin{equation}\label{E:decomposition}
    \sigma_\epsilon - \sigma_0=\sum_{n=0}^{\theta_\epsilon-1} (\sigma_{n+1}-\sigma_{n} )= \sum_{j=1}^{N-1} \sum_{m=1}^\infty 1_{\{N_j^m < \theta_\epsilon\}} (\sigma_{ N_j^m +1} - \sigma_{N_j^m})
\end{equation}
where for $ 1\le j \le N-1$ and $m \ge 1$, $n = N_j^m$ is the time of the $m$th visit to level $-j$ by the martingale $M_n$.
We will check that the conditional expectation of the left hand side, given $\theta_\eps < \theta$, remains finite as $\eps \to 0$.

Let $\cF$ denote the $\sigma$-algebra generated by all the random variables of the form $X_{\sigma_n}, 0 \le n \ne N$. Note that the event $\theta_\eps < \theta$ is measurable with respect to $\cF$, and that given $\cF$, the trajectory of $(X_t, 0 \le \sigma_\eps)$ may be split in pieces of the form $X[ \sigma_n, \sigma_{n+1}]$, which are \emph{independent} of one another, and where each piece may be described as a Brownian motion starting from $X_{\sigma_n}$ conditioned to exit a certain annulus $A_n = B(0, e^{M_n +1}) \setminus B(0, e^{M_n -1})$ through $X_{\sigma_{n+1}}$.  Now, if $A$ is any annulus of the form $B(0, e^{k+1} ) \setminus B(0, e^{k-1})$ and $y \in A$ is any interior point, $z \in \partial A$ is any point on the boundary of the annulus $A$, then it is not hard to see for some constant $C>0$, by Brownian scaling,
\begin{equation}\label{E:scaling}
 \E_y (\sigma_A |X_{\sigma_A} = z)  \le C e^{2k}\,,
\end{equation}
where $\sigma_A$ is the time at which $X$ leaves $A$,  and this estimate is uniform in $y \in A, z \in \partial A$, and $k \in \Z$. Consequently,
\begin{equation}\label{E:uniformexit}
\E( \sigma_{n+1} - \sigma_n | \cF) \le C e^{2 M_n}.
\end{equation}
This implies that $\tilde \E( \sigma_0) \le C < \infty$. Furthermore, using \eqref{E:decomposition}
\begin{align*}
    \tilde{\mathbb{E}}(\sigma_\epsilon - \sigma_0) & = \sum_{j=1}^{N-1} \sum_{m=1}^\infty  \tilde  \E \left[1_{\{N_j^m < \theta_\epsilon\}} (\sigma_{ N_j^m +1} - \sigma_{N_j^m}) \right]\\
    & = \sum_{j=1}^{N-1} \sum_{m=1}^\infty  \tilde  \E \left[ 1_{\{N_j^m < \theta_\epsilon\}}  \tilde \E [ (\sigma_{ N_j^m +1} - \sigma_{N_j^m} ) | \cF] \right] \\
    & \le \sum_{j=1}^{N-1} \sum_{m=1}^\infty  \tilde  \E \left[ 1_{\{N_j^m < \theta_\epsilon\}}   C e^{-2j} \right]\\
    & \le C  \sum_{j=1}^{N-1}  e^{-2j}  \tilde \E ( \# \{n \le \theta: M_n = j \} ) \\
    & \le C \sum_{j=1}^{N-1} j^2 e^{-2j}.
\end{align*}
Here we used \eqref{E:uniformexit} in the third line, and \eqref{E:localtimecond} in the last line. The right hand side is uniformly bounded in $N$ (or equivalently $\eps$). We deduce that $\E ( \sigma_\epsilon | \sigma_\epsilon < \sigma ) \le C$ for some constant $C$ independent of $x$. Therefore, using Jensen's inequality and convexity of $x \mapsto e^{-x}$, we get
$$
\mathbb{E}_x\Big(\exp(-M^2 \sigma_\epsilon )\Big| \sigma_\epsilon < \sigma\Big) \ge  \exp ( - M^2 \E_x( \sigma_\epsilon | \sigma_\epsilon < \sigma) ) \ge \exp ( - M^2 C),
$$
which proves \eqref{E:goalmas}. This concludes the proof of Lemma \ref{L:hitmass}.
\end{proof}

\section{Discrete crossing, Beurling estimates}

\label{SS:discrete}

Here we supply the missing discrete estimates required for the proof of Lemma \ref{lem:YY}, required to complete the proof of  \ref{T:LERWgeneral_massTEXT} and thus Theorem \ref{T:LERWgeneral_drift}. The first one concerns disconnection events: for $z \in\Omega $, and $r>0$ such that $B (z, 10 r) \subset\Omega $, let us write $ x[0,t] \circlearrowleft^{(r)}z$ for the event
that the path $x[0,t]$ disconnects $B(z, r)$ from $B(z, 5r)^c$  (or, equivalently, makes a non-contractible loop in the corresponding annulus); this is the notation from \cite{YadinYehudayoffLERWconvergence}. The next lemma corresponds to Proposition 3.4 in \cite{YadinYehudayoffLERWconvergence}, where it is stated for Brownian motion.
We will need it for the random walk. 

\begin{lem}\label{lem:circle}
For every $R$ there exists a $z$ such that the following holds:
Let $0<r\leq R$ and let $z\in\mathbb{C}$. Let $T$ be the exit time of $X(\cdot)$ from $B(z,r)$. Then for every $x^\delta\in B(z,r/2)$, $$\mathbb{P}^{(\rho)}_{x^\delta}(X(0,T)\circlearrowleft^{(r)}z)\geq c.$$
\end{lem}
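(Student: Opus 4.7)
The plan is to derive Lemma \ref{lem:circle} from the uniform crossing estimate \eqref{E:crossing_goal} by concatenating finitely many rectangular crossings into a loop around $z$. By translation we may assume $z = 0$. We fix once and for all (independently of $r$) a family of rectangles $\cR_0, \cR_1, \ldots, \cR_k$, each obtained from the reference rectangle $\cR$ or $\cR'$ by scaling by some $r' \asymp r$, translation, and rotation by a multiple of $\pi/2$. They are arranged so that: (a) $\cR_1, \ldots, \cR_k \subset B(0,r) \setminus B(0, r/2)$; (b) the starting ball $B_1^{i+1}$ of $\cR_{i+1}$ contains the target ball $B_2^i$ of $\cR_i$, with indices taken cyclically modulo $k$; (c) the auxiliary rectangle $\cR_0$ satisfies $B(0,r/2) \subset B_1^0$ and $B_2^0 \subset B_1^1$, so that $\cR_0$ serves to bring the walk from $x \in B(0,r/2)$ into $B_1^1$; and (d) any continuous path which successively crosses $\cR_1, \ldots, \cR_k$ (from $B_1^i$ to $B_2^i$ while staying inside $\cR_i$) has trace disconnecting $0$ from $\partial B(0,r)$. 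A concrete choice is $k = 6$ with rectangles of aspect ratio $3{:}1$ placed along the sides of a regular hexagon inscribed in the annulus $\{r/2 \leq |y| \leq r\}$.

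Let $\sigma_i$ denote the first time the walk enters $B_2^i$ after having entered $B_1^i$, and let $E_i$ be the event that $\sigma_i$ occurs before the walk leaves $\cR_i$ or is killed. By the strong Markov property applied at each entry time into $B_1^i$, together with \eqref{E:crossing_goal} applied at each step, we obtain
\begin{equation*}
\mathbb{P}_x(E_0 \cap E_1 \cap \cdots \cap E_k) \geq c^{k+1} > 0,
\end{equation*}
where $c$ is the constant from \eqref{E:crossing_goal} and depends only on $R$ (and the family $\Gtree$). On this event the trace of $X[0, \sigma_k]$ is contained in $B(0,r)$ and, by (d), disconnects $0$ from $\partial B(0,r)$; in particular $\sigma_k < T$ and the desired loop is produced. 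The same bound $c^{k+1}$ then lower-bounds $\mathbb{P}_x(X(0,T) \circlearrowleft^{(r)} 0)$, which is what is claimed.

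The main obstacle is purely topological and combinatorial: verifying condition (d), namely that $k$ carefully positioned rectangular crossings genuinely enclose the origin. This is a standard RSW-type gluing argument, and with the overlapping hexagonal arrangement any concatenation of the six rectangular crossings produces a continuous curve with winding number $\pm 1$ around $0$. A minor point is that the random walk trajectory is not simple, but the loop-making property is preserved under passing to a simple subpath (or, equivalently, under chronological loop-erasure). Once (d) is set up, the entire probabilistic content is encoded in the single estimate \eqref{E:crossing_goal} and no further property of the walk is required; in particular the presence of killing is automatically absorbed into the constant $c$ via the statement of \eqref{E:crossing_goal}.
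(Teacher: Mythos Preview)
Your argument is essentially the same as the paper's: the paper's proof simply states that the loop event contains the intersection of ten box-crossing events (pointing to a figure) and then invokes \eqref{E:crossing_goal}, exactly as you do with your $k+1$ rectangles and the strong Markov property. One small slip in your setup: condition (c) as written, $B(0,r/2)\subset B_1^0$, forces the scale $r'$ of $\cR_0$ to be at least $2r$, so $\cR_0$ cannot fit inside $B(0,r)$ and the trace $X[0,\sigma_k]$ need not stay in $B(0,r)$; this is fixed by choosing $\cR_0$ (or a couple of initial rectangles) depending on $x$ so that $x\in B_1^0$, which still gives a uniform bound.
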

\begin{proof}
Encircling a point at scale $r$ contains the intersection of ten box-crossing events (see Figure \ref{F:loops}).
We conclude using our crossing assumption \eqref{E:crossing_goal}.
\end{proof}

\begin{figure}
\begin{center}
\includegraphics[scale=.8]{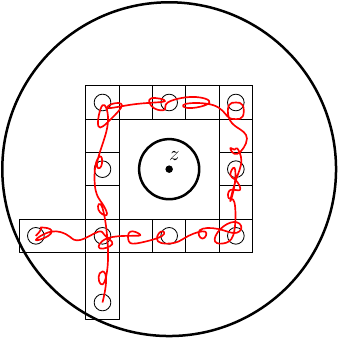}
\label{F:loops}
\end{center}
\caption{Making a loop by crossing rectangles.}
\end{figure}

The last missing piece is a Beurling estimate (corresponding to Proposition 4.1. in \cite{YadinYehudayoffLERWconvergence}), which shows that a walk starting close to the boundary of a domain is very likely to leave this domain in a short time, without going far from its starting point. Actually what is needed is the version of this estimate in which we want to ensure the random walk will hit a given curve which is close to its starting point; of course, this makes no difference.
Such an estimate is well-known in the critical case where the walk converges to Brownian motion.
This remains true in the off-critical regime thanks to the following observation: while of course the off-critical Brownian motions are not scale invariant, this effect disappears at small scales. In fact, making loops at any scale above that separating the curve from the starting point guarantees an intersection, and so we can get a uniform bound using the previous observations. Also, since we assume that the original domain $\Omega $ is bounded, we do not need to consider arbitrarily large scales and can therefore obtain uniform bounds for all domains which have diameter less than some constant $R$.

The desired estimate is formulated in \cite{YadinYehudayoffLERWconvergence} after applying a conformal map to the unit disc (let $\phi$ denote the unit conformal map from $\Omega $ to $\DD$ such that $\phi(o) = 0$ and $\phi'(o)>0$).
This is initially a little worrying, since we did not assume uniform crossing after applying the conformal map $\phi$ but instead only in $\Omega $ itself.
(Note that this uniform crossing estimate could in fact fail to hold for $\phi(\Gtree)$ if the domain $\Omega $ is not very nice). Thankfully, we will see that thanks to Koebe's one quarter theorem we can get the required estimate. 
\begin{lem}\label{lem:hitcurve}
For all $\alpha,R>0$, there exists an $\eta>0$ such that for all $\tilde \epsilon>0$, for all simply connected domains $\Omega $ such that $0\in \Omega \subset B(0,R)$, and for all $\tilde a\in(1-\tilde \epsilon)\mathbb \Omega $, there exists a $\delta_0$ such that the following holds for all $\delta < \delta_0$:

Let $y\in v(\Gtree)\cap\phi^{-1}(\rho(\tilde a, \eta\tilde \epsilon)) \in \Omega $. Let $X^\delta$ denote random walk on $\Gtree$ starting from $y$. Then, for every continuous curve $g$ starting in $B(\tilde a,\eta\tilde \epsilon)$ and ending outside of $B(\tilde a, \tilde \epsilon)$, $$
\P_y^\delta ( \phi (X[0,T]) \cap [g] = \emptyset)
\le \alpha\,,
$$
where $[g]$ is the range of $g$ and $T$ is the time at which $\phi(X)$ leaves $B( \tilde a, \tilde \epsilon)$.
\end{lem}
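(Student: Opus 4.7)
The plan is to transfer the setup from $\mathbb{D}$ back into $\Omega$ via Koebe's distortion theorem, and then apply the uniform crossing assumption \eqref{E:crossing_goal} at many logarithmic scales to produce many independent chances for the walk to encircle $y$ and hence hit the preimage of the curve $g$. This is essentially a standard Beurling-type multi-scale argument, carried out inside $\Omega$ (where we have the crossing assumption) rather than in $\phi(\Gtree)$ (where we do not).

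First I would set $w = \phi^{-1}(\tilde a)$ and $\rho = |(\phi^{-1})'(\tilde a)|$. Koebe's distortion theorem applied to $\phi^{-1}$ around $\tilde a$ (using $|\tilde a| \le 1 - \tilde \eps$ to keep $B(\tilde a, \tilde \eps)$ inside $\mathbb{D}$), together with Koebe's one-quarter theorem and the bound $\Omega \subset B(0,R)$, provides universal constants $c_1, c_2 > 0$ and the estimate $\rho \tilde \eps \le 4R$ such that
\[ B(w, c_1 \rho r) \subset \phi^{-1}(B(\tilde a, r)) \subset B(w, c_2 \rho r), \quad r \in \{\eta \tilde \eps, \tilde \eps\}. \]
In particular, both $y$ and the initial endpoint of $\phi^{-1}([g] \cap \mathbb{D})$ lie within distance $c_2 \rho \eta \tilde \eps$ of $w$, while the other endpoint of this curve lies outside $B(w, c_1 \rho \tilde \eps)$.

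Next, set $r_0 = 4 c_2 \rho \eta \tilde \eps$ and define the logarithmic annuli $A_k = B(y, 2^k r_0) \setminus B(y, 2^{k-1} r_0)$ for $k = 1, \ldots, K$, where $K \asymp \log_2(1/\eta)$ is chosen so that $B(y, 2^K r_0) \subset \phi^{-1}(B(\tilde a, \tilde \eps))$. By construction, the preimage curve starts inside $B(y, r_0/2)$, hence strictly inside every inner disk $B(y, 2^{k-1} r_0)$, and exits $B(y, 2^K r_0)$; so it crosses each annulus $A_k$. Consequently, any loop of the walk around $y$ that remains in some $A_k$ must intersect $\phi^{-1}([g] \cap \mathbb{D})$, which forces $\phi(X^\delta[0,T]) \cap [g] \neq \emptyset$.

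Finally, applying the strong Markov property at each successive first entry of $X^\delta$ into $A_k$ from its inner boundary, Lemma \ref{lem:circle} (which is the random-walk consequence of \eqref{E:crossing_goal}) yields that each attempt to encircle $y$ within $A_k$ succeeds with probability at least some $c > 0$, uniformly in $k$, because all the scales $2^k r_0$ remain bounded by a constant multiple of $R$. These attempts are independent across $k$, so the probability that the walk leaves $\phi^{-1}(B(\tilde a, \tilde \eps))$ without ever encircling $y$ is at most $(1-c)^K \le \eta^{c'}$ for some $c' > 0$. Choosing $\eta = \eta(\alpha, R)$ small enough to make $\eta^{c'} < \alpha$, and then $\delta_0 < r_0/10$ (depending on $\eta, \tilde\eps$ and $\Omega$) so that Lemma \ref{lem:circle} applies discretely at every scale down to $r_0$, finishes the proof. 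The main subtlety is keeping the Koebe estimates uniform in $\tilde a$: the conformal derivative $\rho$ may blow up as $\tilde a$ approaches $\partial \mathbb{D}$, but is exactly compensated by $\tilde \eps$ through the natural scale $\rho \tilde \eps$, which remains bounded by $4R$ and so falls within the range of \eqref{E:crossing_goal}.
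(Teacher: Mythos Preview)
Your proposal is correct and follows essentially the same route as the paper's own proof: pull everything back to $\Omega$ via Koebe, observe that the preimage of $g$ connects a ball of radius $\asymp \eta\rho\tilde\eps$ to the complement of a ball of radius $\asymp \rho\tilde\eps$, and then use the uniform crossing estimate (through Lemma~\ref{lem:circle}) at the $\asymp \log(1/\eta)$ intermediate dyadic scales to force an intersection with probability at least $1-(1-c)^K$. The only cosmetic differences are that you center the annuli at $y$ rather than at $\phi^{-1}(\tilde a)$ and invoke the distortion theorem where the paper just cites Koebe's $1/4$ theorem twice; neither changes the argument.
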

\begin{proof}
Let $\tilde \epsilon>0$ and let $\tilde a \in (1- \tilde \eps) \DD$. Let $ a = \phi^{-1} (\tilde a) \in\Omega $, and let $\eps = |(\phi^{-1})'(\tilde a) | \tilde \eps$; note that we have no control over the actual size of $\eps$ since it depends on the conformal map near $\tilde a$. Nevertheless, applying the Koebe $1/4$-theorem (twice), it is easy to see that the image of curve $g$ under $\phi^{-1}$ starts from a ball of radius $4\eta \eps$ around $a$, and ends outside of a ball of radius $\eps/4$ around $a$. For $\phi(X^\delta[0,T])$ to avoid $g$, $X^\delta[0,T]$ must therefore avoid making loops at \emph{all} scales between $4 \eta \eps$ and $\eps/4$ (this corresponds to a \emph{fixed} number of scales, even though $\eps$ itself is variable).
Furthermore, using the strong Markov property, all the events $\circlearrowleft^{(r)} a$ occur with fixed positive probability (by Lemma \ref{lem:circle}) and independently of one another.
By choosing $\eta$ small enough, this probability can therefore be made smaller than $\alpha$, uniformly over all the parameters.
\end{proof}

Together these results conclude the convergence of the discrete Poisson kernel ratio and therefore the proof of Lemma \ref{lem:YY}.

\bibliographystyle{alpha}
\bibliography{references}

\end{document}